\documentclass[a4paper, reqno, 11pt]{amsart}
\usepackage{color}
\makeatletter

\@addtoreset{equation}{section}
\makeatother
\usepackage{setspace}
\usepackage{xcolor}
\usepackage{here}
\usepackage{graphicx}
\usepackage{float}

\usepackage[T1]{fontenc}
\everymath{\displaystyle}
\usepackage{amsmath}
\usepackage{amssymb}
\usepackage{latexsym}
\usepackage{amsthm}
\usepackage[hidelinks,hypertexnames=false]{hyperref}
\usepackage{mathtools}
\usepackage{mathrsfs}
\usepackage{tikz}
\usetikzlibrary{arrows.meta}
\newtheorem{Thm}{Theorem}[section]

\newtheorem{Lem}[Thm]{Lemma}
\newtheorem{Prop}[Thm]{Proposition}

\newtheorem{Cor}[Thm]{Corollary}

\theoremstyle{definition}

\newtheorem{Def}[Thm]{Definition}
\newtheorem{Rem}[Thm]{Remark}
\newtheorem{Exa}[Thm]{Example}

\DeclareMathOperator{\Lip}{Lip}
\DeclareMathOperator{\dist}{dist}
\DeclareMathOperator{\caplog}{cap}
\DeclareMathOperator{\esssup}{ess\,sup}
\newcommand{\A}{\mathcal A}
\newcommand{\Leb}{\mathcal{L}^1}
\newcommand{\Reg}{\mathrm{Reg}}
\newcommand{\Ball}[2]{B(#2,#1)}
\newcommand{\id}{\mathrm{id}}

\begin{document}

\begin{abstract}
We consider probability measures whose distribution functions solve conjugate equations between two compatible systems of increasing maps on the unit interval.
We give sufficient conditions for Ullman--Stahl--Totik regularity in terms of finite compositions of the source maps and a lower mass estimate which holds at Lebesgue almost every point. 
Our result substantially extends a regularity theorem of Mantica and Totik by allowing nonlinear target maps and replacing their assumptions on convexity, concavity, and displacement near fixed points. 
The proof uses the mass criterion of Stahl and Totik from logarithmic potential theory.
We give further sufficient conditions involving uniform mesh bounds, sets where derivatives equal one, and endpoint drift.
Examples distinguish these conditions and show that the uniform version of the lower mass estimate can fail for regular conjugacy measures.
\end{abstract}

\title{Ullman--Stahl--Totik regularity for conjugacy measures}
\author{Kazuki Okamura}
\date{\today}

\address{Department of Mathematics, Faculty of Science, Shizuoka University, 836 Ohya, Suruga-ku, Shizuoka 422-8529, Japan}
\email{okamura.kazuki@shizuoka.ac.jp}
\keywords{Ullman--Stahl--Totik regularity, conjugate equations, iterated function systems, logarithmic capacity, singular measures}
\subjclass[2020]{Primary 42C05; Secondary 31A15, 39B12, 28A80}

\maketitle

\section{Introduction}\label{sec:introduction}

The asymptotic behavior of orthogonal polynomials is closely related to the distribution of mass of the underlying measure.
Ullman--Stahl--Totik regularity describes measures for which the $m$th root of the $L^2$ norm of the monic orthogonal polynomial of degree $m$ tends to the logarithmic capacity of the support.
For a finite positive Borel measure with support $[0,1]$, regularity means that this limit is $1/4$. 
Regularity also implies that the normalized zero-counting measures converge weakly to the arcsine measure and that the root asymptotics of the orthonormal polynomials outside $[0,1]$ are determined by the Green function of its complement; see \cite[Theorems~1.7 and~1.10]{Simon}.
These connections make regularity useful in approximation theory and in the spectral theory of Jacobi matrices \cite{Saff,Simon}.

The theory has its origins in the work of Ullman \cite{Ullman} and was developed systematically by Stahl and Totik \cite{StahlTotik}.
An important part of this theory consists of sufficient conditions for regularity expressed directly in terms of local masses.
We use the criterion $\lambda^*$, stated in \cite[Theorem~4.2.7]{StahlTotik} and \cite[Criterion~1.1]{ManticaTotik}.
Roughly speaking, this criterion requires that the masses of small intervals should not decay too rapidly on sets whose Lebesgue measures tend to the full measure of the support.
Therefore it allows regularity to be proved from local lower mass estimates, without computing the associated orthogonal polynomials.

Invariant measures of iterated function systems provide a natural setting for these criteria.
Kr{\"u}ger and Simon \cite{KrugerSimon} proved UST regularity for the classical Cantor measure and related measures.
For Minkowski's question mark measure, Dresse and Van Assche \cite{DresseVanAssche} investigated orthogonal polynomials numerically and asked whether the measure is UST regular and whether it belongs to the Nevai class.
Mantica \cite{Mantica2017} obtained further numerical evidence and formulated the regularity conjecture.
Mantica and Totik \cite{ManticaTotik} proved regularity of the question mark measure and of the measure associated with the inverse question mark function.
They also established regularity for more general classes of iterated function systems.
In particular, \cite[Theorem~8.1]{ManticaTotik} deals with increasing weakly contractive maps whose image intervals cover $[0,1]$ and have disjoint interiors.
This result permits neutral fixed points under prescribed convexity or concavity and quadratic lower bounds on the displacement near those fixed points. 
They ask whether these assumptions can be omitted; see \cite[Remark~8.6]{ManticaTotik}.

In this paper, we study regularity for probability measures defined by conjugate equations between two compatible systems of increasing maps on $[0,1]$.
For the source maps $\{f_i\}_i$ and the target maps $\{g_i\}_i$, an increasing homeomorphism $\varphi$ satisfying $\varphi\circ f_i=g_i\circ\varphi$ defines a probability measure $\mu$ by $\mu([0,x])=\varphi(x)$.
This framework includes de Rham's functional equations \cite{deRham} and several classical singular functions.
Zdun \cite{Zdun2001} studied existence, uniqueness, and continuity of solutions for compatible systems of conjugate equations.
Serpa and Buescu \cite{SerpaBuescu2017} developed a general framework for systems of iterative functional equations between topological and metric spaces.

In this framework, we extend the interval result of Mantica and Totik \cite[Theorem~8.1]{ManticaTotik} in two directions. 
First, the balanced measures with constant probability weights considered there correspond to affine target maps $\{g_i\}_i$ in the conjugate equation.
Here the target maps $\{g_i\}_i$ may be nonlinear, provided that they have vanishing mesh and their difference quotients have a uniform positive lower bound. 
Second, we replace Mantica and Totik's assumptions on convexity, concavity, and displacement near fixed points by conditions on finite compositions of the source maps $\{f_i\}_i$ and on local masses. 
The source maps $\{f_i\}_i$ are assumed to be $1$-Lipschitz and to have vanishing mesh.
The main theorem requires no differentiability assumption on either system.
In particular, our criterion allows source maps which preserve distances on nondegenerate subintervals, as well as systems for which a uniform mesh bound of order $1/n$ fails.

The main criterion separates a condition on the source maps $\{f_i\}_i$ from a condition on the measure.
Condition $(R)$ requires that, for some fixed block length, the set of points admitting an address with no strictly contractive block has Lebesgue measure zero.
Condition $(W)$ requires a lower mass bound of the form $\exp(-C/r)$, with a common constant $C$, at Lebesgue almost every point, and the radius threshold may depend on the point.
Theorem~\ref{thm:main} shows that these two conditions imply $\lambda^*$ and hence UST regularity.
The proof uses the Stahl--Totik mass criterion from logarithmic potential theory. 
In order to verify it, we first use $(R)$ to prove that the Lipschitz constants of successive prefix compositions tend to zero outside a Lebesgue null set, and then transfer $(W)$ through the conjugate equation and obtain $-r\log\mu([x-r,x+r]\cap[0,1])\to0$ as $r\downarrow0$ at Lebesgue almost every $x$.
This gives a method based on finite compositions and local mass estimates for establishing regularity in logarithmic potential theory.

We also give sufficient conditions for $(R)$ and $(W)$.
A uniform source mesh bound of order $1/n$, denoted by $(M)$, implies the stronger uniform mass condition $(UW)$.
For $C^1$ source maps $\{f_i\}_i$, finiteness of the derivative equality sets for all compositions of some fixed length implies $(R)$.
For $C^2$ source maps $\{f_i\}_i$, condition $(M)$ is sufficient for UST regularity under the standing assumptions.
Further criteria are given in terms of endpoint drift and quadratic contraction estimates for finite compositions. 
Proposition~\ref{prop:MT} shows that the assumptions of \cite[Theorem~8.1]{ManticaTotik} imply our sufficient conditions, so that their theorem is recovered in its full stated setting. 
The examples show that the extension includes cases outside those assumptions.
In particular, Example~\ref{ex:R-not-F} has intervals on which a source map has derivative one, whereas Example~\ref{ex:W-not-UW} satisfies the main criterion even though $(M)$ fails and $(UW)$ fails for affine target maps $\{g_i\}_i$.
As a special case, we obtain UST regularity for the linear fractional systems considered in \cite{Okamura2026}, where a uniform lower mass estimate was already obtained in \cite[Proposition~2.13]{Okamura2026}.

A related potential-theoretic approach to de Rham equations was recently given by Arcozzi \cite{Arcozzi}.
He represents certain M\"obius de Rham functions as normalized cumulative dyadic capacities and refines estimates for the size of sets carrying the associated measures by using dyadic Riesz capacities.
These results concern capacitary representations and singularity, while our question is UST regularity and the resulting asymptotic behavior of orthogonal polynomials.

The structure of this paper is as follows.
We give the framework and state the main theorem below.
Section~\ref{sec:proof} contains its proof.
Sections~\ref{sec:finite} and~\ref{sec:mesh} give sufficient conditions based on finite derivative equality sets and uniform mesh bounds.
Section~\ref{sec:MT} relates the criterion to the assumptions of Mantica and Totik, and Section~\ref{sec:endpoints} gives endpoint criteria and their application to linear fractional systems.
Section~\ref{sec:implications} summarizes the implications between the conditions according to the differentiability of the source maps $\{f_i\}_i$.
Section~\ref{sec:examples} gives examples distinguishing the conditions, including a general UST-regular measure for which $(W)$ fails.
Finally, Section~\ref{sec:cesaro-nevai} applies Simon's theorem to obtain the Ces\`aro--Nevai condition for the recurrence coefficients.

\subsection{Framework and main result}\label{subsec:framework}

Let $N\ge2$ and $\A \coloneqq \{0,\ldots,N-1\}$. 
We first define the compatibility condition for a family of interval maps.

\begin{Def}\label{def:compatible}
A family $\{h_i\}_{i\in\A}$ of strictly increasing continuous maps from $[0,1]$ into itself is called compatible if
\begin{equation*}
 h_0(0) = 0,\qquad h_i(1) = h_{i+1}(0), \quad 0\le i<N-1,\qquad h_{N-1}(1) = 1.
\end{equation*}
\end{Def}

For $\sigma = (\sigma_1,\ldots,\sigma_n)\in\A^n$, let $h_\sigma \coloneqq h_{\sigma_1}\circ\cdots\circ h_{\sigma_n}$, $h_\emptyset \coloneqq \id$, and
\[ \delta_n(h) \coloneqq \max_{\sigma\in\A^n} h_{\sigma}(1) - h_{\sigma}(0), \quad D_h \coloneqq \left\{h_\sigma(e) \colon \sigma\in\A^*,\ e\in\{0,1\} \right\}.\]
Here $\A^*$ includes the empty word. 
For a finite word $\sigma\in\A^n$, write $|\sigma|=n$ for its length, with $|\emptyset|=0$. 

We say that the system has {\it vanishing mesh} if $\lim_{n \to \infty} \delta_n(h) = 0$.

Unless explicitly stated otherwise, the source maps $\{f_i\}_i$ form a compatible system and satisfy
\begin{equation}\label{eq:basic-f}
 \Lip(f_i)\le1, \quad i\in\A,\qquad \lim_{n\to\infty}\delta_n(f)=0.
\end{equation}
The target maps $\{g_i\}_i$ also form a compatible system and satisfy
\begin{equation}\label{eq:basic-g}
 c_g\coloneqq\min_{i\in\A}\inf_{0\le u<v\le1}\frac{g_i(v)-g_i(u)}{v-u}>0,
 \qquad \lim_{n\to\infty}\delta_n(g)=0.
\end{equation}
The constant $c_g$ depends only on the target maps $\{g_i\}_i$.
Compatibility gives $0<c_g\le g_i(1)-g_i(0)<1$ for every $i\in\A$.
The first condition in \eqref{eq:basic-g} is equivalent to Lipschitz continuity of each inverse map $g_i^{-1}\colon g_i([0,1])\to[0,1]$.
In fact, $\Lip(g_i^{-1})\le c_g^{-1}$ for every $i\in\A$.
No differentiability assumption is imposed on the target maps $\{g_i\}_i$.
The lower bound holds, in particular, when every $g_i$ is $C^1$ with positive derivative on $[0,1]$.
Whenever derivatives are used, derivatives at $0$ and $1$ are one-sided.

Both systems have vanishing mesh by \eqref{eq:basic-f} and \eqref{eq:basic-g}.
Consequently, there exists a unique increasing homeomorphism $\varphi \colon [0,1] \to [0,1]$ satisfying 
\begin{equation}\label{eq:conjugacy}
 \varphi \circ f_i = g_i \circ \varphi, \qquad i\in\A.
\end{equation}
Existence and uniqueness of $\varphi$ follow from \cite[Lemma~2.3]{Okamura2026}.
Indeed, vanishing mesh implies that $D_f$ and $D_g$ are dense in $[0,1]$, so both compatible families are $D$-systems in the terminology of \cite[Subsection~2.1]{Okamura2026}.
The resulting continuous strictly increasing solution fixes $0$ and $1$, and hence it is a homeomorphism of $[0,1]$ onto itself.
Define the probability measure $\mu=\mu_\varphi$ by $\mu([0,x])=\varphi(x)$ for $x \in [0,1]$.
The measure $\mu$ is nonatomic and has support $[0,1]$.
We use these systems, the conjugacy $\varphi$, and the measure $\mu$ throughout the following regularity results.

Write $\Ball r x \coloneqq [x-r,x+r] \cap [0,1]$ for $x \in [0,1]$ and $r \ge 0$.
We denote the Lebesgue measure by $\Leb$. 
We also write $|I|$ for the length $\Leb(I)$ of an interval $I$. 
We recall the mass criterion $\lambda^*$.

\begin{Def}\label{def:lambda}
A Borel probability measure $\mu$ with support $[0,1]$ satisfies condition $\lambda^*$ if, for every $\eta>0$,
\begin{equation}\label{eq:lambda}
 \lim_{s \to \infty} \Leb\left( \left\{x\in[0,1]:\mu(\Ball{1/s}{x})\ge \exp(-\eta s) \right\} \right) = 1.
\end{equation}
\end{Def}

By \cite[Criterion~1.1]{ManticaTotik}, condition $\lambda^*$ implies Ullman--Stahl--Totik (UST) regularity, denoted by $\mu\in\Reg([0,1])$.
This regularity concerns the $n$th-root asymptotics of orthogonal polynomials and its precise formulation is recalled in Subsection~\ref{subsec:regular-without-W}.

For a nonatomic probability measure $\mu$ with support $[0,1]$, let
\begin{equation*}
 A_\mu(y) \coloneqq \limsup_{r\downarrow0}  \left[-r\log\mu(\Ball r y)\right]\in[0,\infty].
\end{equation*}
This function is Borel measurable, since the ball mass is positive and continuous in $(y,r)$ for $r>0$, and the upper limit may be taken over rational radii.

We introduce two lower mass conditions, one uniform and the other holding at Lebesgue almost every point.

\begin{Def}\label{def:mass}
For a nonatomic probability measure $\mu$ on $[0,1]$, we consider the following two conditions. \\
(i) We say that $(UW)$ holds for $\mu$  if there exists $C_1 > 0$ such that
\begin{equation*}
 \mu(\Ball t y)\ge\exp(-C_1/t),
\end{equation*}
for every $(y,t)$ such that $0<y<1,\ 0<t\le\min\{y,1-y\}$. \\
(ii) We say that $(W)$ holds for $\mu$  if
\begin{equation*}
 \esssup_{y\in(0,1)} A_\mu(y) < \infty,
\end{equation*}
where the essential supremum is with respect to Lebesgue measure.
\end{Def}

Clearly, $(UW)$ implies $(W)$.
Condition $(W)$ holds if and only if there exists a constant $C>0$ such that, for Lebesgue almost every $y \in (0,1)$, there exists $t_0(y) > 0$ such that for every $t \in (0, t_0(y))$, $ \mu(\Ball t y)\ge \exp(-C/t)$. 
Any constant $C > \esssup_{y\in(0,1)} A_\mu(y)$ can be used in this formulation. 
The radius threshold $t_0(y)$ need not be uniform, and an exceptional Lebesgue null set is allowed.

We record the finite compositions that are strict contractions and the positions at which they occur in an infinite word.

\begin{Def}\label{def:blocks}
Let $\{h_i\}_{i\in\A}$ be a family of maps from $[0,1]$ into itself with $\Lip(h_i)\le1$, and let $m\ge1$.
A word $\tau\in\A^m$ is called a \emph{strict block of length $m$} for this family if its composition $h_\tau$ is a strict contraction on $[0,1]$, that is, $\Lip(h_\tau)<1$.
Otherwise it is called \emph{nonstrict}.
Write
\[
 \begin{aligned}
 \mathcal S_m(h) &\coloneqq \left\{\tau\in\A^m \colon \Lip(h_\tau)<1 \right\},\\
 \mathcal U_m(h) &\coloneqq \A^m \setminus \mathcal S_m(h) = \{\tau\in\A^m \colon \Lip(h_\tau)=1\}.
 \end{aligned}
\]
For an infinite word $\omega\in\A^{\mathbb N}$, its length-$m$ block beginning at position $j\ge1$ is the word $\omega_j\cdots\omega_{j+m-1}$.
The set of starting positions of its strict blocks is
\[ \mathcal J_m(h;\omega) \coloneqq \left\{j\ge1 \colon \omega_j\cdots\omega_{j+m-1}\in\mathcal S_m(h) \right\}.\]
Occurrences may overlap, and repeated occurrences of the same word are counted at their respective starting positions.
We say that $\omega$ contains infinitely many strict blocks of length $m$ if $\mathcal J_m(h;\omega)$ is infinite.
Two occurrences are disjoint if their sets of symbol positions are disjoint.
For the source maps $\{f_i\}_i$, abbreviate $\mathcal S_m(f)$, $\mathcal U_m(f)$, and $\mathcal J_m(f;\omega)$ to $\mathcal S_m$, $\mathcal U_m$, and $\mathcal J_m(\omega)$, respectively.
The terms \emph{strict pair} and \emph{strict triple} refer to strict blocks of lengths $2$ and $3$, respectively.
\end{Def}

For $m\ge1$, define the set of infinite words with no strict length-$m$ block by
\[
 \begin{aligned}
 \Sigma_m^0 &\coloneqq \left\{\omega\in\A^{\mathbb N} \colon \mathcal J_m(\omega)=\emptyset \right\}\\
 &= \left\{ \omega \in \A^{\mathbb N} \colon \omega_j\cdots\omega_{j+m-1}\in\mathcal U_m \text{ for every } j \ge 1 \right\}.
 \end{aligned}
\]
The vanishing mesh of the source system $\{f_i\}_i$ defines a continuous coding map $\pi_f \colon \A^{\mathbb N}\to[0,1]$ by
\[ \{\pi_f(\omega)\} = \bigcap_{n\ge1} f_{\omega_1\cdots\omega_n}([0,1]).\]
For $x \in [0,1]\setminus D_f$, let $\omega(x) = \left(\omega_j(x) \right)_{j\ge1}$ be its unique \emph{address}, that is, the unique infinite word satisfying $\pi_f(\omega(x))=x$.
Uniqueness follows because $x$ lies in exactly one level-$n$ cylinder for each $n$.
For such $x$, define its level-$n$ cylinder by
\[ I_n(x)\coloneqq f_{\omega_1(x)\cdots\omega_n(x)}([0,1]), \qquad n\ge0.\]
We also define
\[ L_n(x)\coloneqq\Lip\left(f_{\omega_1(x)\cdots\omega_n(x)}\right), \qquad n\ge0.\]
The empty-word convention gives $I_0(x)=[0,1]$ and $L_0(x)=1$.
The sequence $(L_n(x))_{n\ge0}$ is positive and nonincreasing, since the source maps $\{f_i\}_i$ are strictly increasing and $1$-Lipschitz.

For each $m\ge1$, let  $K_m \coloneqq \pi_f \left(\Sigma_m^0 \right)$.
Thus $K_m$ is the set of points admitting an address that contains no strict block of length $m$.
The set $K_m$ is compact, since $\Sigma_m^0$ is a closed subset of the compact symbolic space and $\pi_f$ is continuous.

\begin{Def}\label{def:survivor}
We say that the source system $\{f_i\}_i$ satisfies the null survivor condition $(R)$ if there exists $m\ge1$ such that $\Leb(K_m)=0$.
\end{Def}

The survivor sets decrease as the block length increases.
Indeed, let $m'\ge m\ge1$.
Since every source map is $1$-Lipschitz, for $\tau\in\mathcal S_m$ and $\eta\in\A^{m'-m}$ we have
\[\Lip(f_{\tau\eta}) \le \Lip(f_\tau)\Lip(f_\eta)\le \Lip(f_\tau)<1.\]
Thus extending a strict block to the right produces another strict block, and hence
\[ \mathcal J_m(\omega)\subset\mathcal J_{m'}(\omega),\qquad \Sigma_{m'}^0\subset\Sigma_m^0,\qquad K_{m'}\subset K_m.\]
In particular, if $\Leb(K_m)=0$ for some $m$, then $\Leb(K_{m'})=0$ for every $m'\ge m$.
Therefore the block length used to verify $(R)$ may always be increased.

\begin{Thm}\label{thm:main}
Under the assumptions \eqref{eq:basic-f}--\eqref{eq:basic-g}, suppose that $(R)$ and $(W)$ hold.
Then $\mu$ satisfies condition $\lambda^*$ and, in particular, $\mu \in \Reg([0,1])$.
\end{Thm}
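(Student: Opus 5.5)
The plan is to prove the stronger pointwise statement that $A_\mu(x)=0$ for Lebesgue almost every $x\in(0,1)$, and then deduce $\lambda^*$ by a monotone-limit argument. The UST regularity conclusion then follows from \cite[Criterion~1.1]{ManticaTotik}. There are three steps: (a) use $(R)$ to show $L_n(x)\to0$ almost everywhere; (b) transport the bound $(W)$ through the conjugacy \eqref{eq:conjugacy} along cylinders, which yields $A_\mu(x)\le C\,L_n(x)$ for every $n$; (c) convert the pointwise limit into \eqref{eq:lambda}.

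\emph{Step (a).} Fix $m$ with $\Leb(K_m)=0$. Put $\theta\coloneqq\max_{\tau\in\mathcal S_m}\Lip(f_\tau)$; since $\mathcal S_m$ is finite, $\theta<1$ (if $\mathcal S_m=\emptyset$ then $K_m=[0,1]$, which is impossible).

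Let $x\notin D_f$ and suppose $\mathcal J_m(\omega(x))$ is finite. Then for some $j$ the shifted word $\omega_{j+1}(x)\omega_{j+2}(x)\cdots$ lies in $\Sigma_m^0$. Hence $x\in f_\sigma(K_m)$ with $\sigma=\omega_1(x)\cdots\omega_j(x)$. Each $f_\sigma$ is $1$-Lipschitz and so maps null sets to null sets, and $\A^*$ and $D_f$ are countable. Therefore the exceptional set $D_f\cup\bigcup_{\sigma\in\A^*}f_\sigma(K_m)$ is null.

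For every other $x$, the set $\mathcal J_m(\omega(x))$ is infinite. Picking starting positions at least $m$ apart gives infinitely many disjoint strict blocks. Factoring $f_{\omega_1\cdots\omega_n}$ through $k$ disjoint strict blocks and $1$-Lipschitz pieces gives $L_n(x)\le\theta^k$, with $k\to\infty$ as $n\to\infty$. So $L_n(x)\to0$.

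\emph{Step (b).} This is the main step. Let $E$ be the null set of $y$ at which the pointwise form of $(W)$ fails with constant $C$. Remove also the null set $\bigcup_\sigma f_\sigma(E)$.

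Take a good $x$ and fix $n$. Write $\sigma=\omega_1(x)\cdots\omega_n(x)$ and $x=f_\sigma(y)$, where $y=\pi_f(\omega_{n+1}(x)\omega_{n+2}(x)\cdots)\notin E$. Since $\Lip(f_\sigma)=L_n\coloneqq L_n(x)$, for $s>0$ we have $f_\sigma(\Ball sy)\subset\Ball{L_ns}x$. Using $\mu([0,t])=\varphi(t)$ and $\varphi\circ f_\sigma=g_\sigma\circ\varphi$, together with the lower difference-quotient bound $c_g$ in \eqref{eq:basic-g} iterated $n$ times, we get
\[\mu(\Ball{L_ns}x)\ \ge\ \mu\bigl(f_\sigma(\Ball sy)\bigr)\ \ge\ c_g^{\,n}\,\mu(\Ball sy).\]
Put $r=L_ns$. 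For $r<L_nt_0(y)$ this gives
\[-r\log\mu(\Ball rx)\le r\,n\log(1/c_g)+L_nC.\]
Letting $r\downarrow0$ with $n$ fixed shows $A_\mu(x)\le C L_n(x)$. Now let $n\to\infty$ and use Step (a) to conclude $A_\mu(x)=0$.

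The delicate point is that $t_0(y)$ depends on the tail point $y$, and $y$ changes with $n$. This is harmless because $n$ is held fixed before taking $r\downarrow0$. The other point needing care is that every exceptional set is a countable union of Lipschitz images of null sets.

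\emph{Step (c).} Fix $\eta>0$ and let $F_k\coloneqq\{x:\ -r\log\mu(\Ball rx)\le\eta \text{ for all } r\in(0,1/k)\}$. By continuity of the ball mass in $r$, it suffices to test rational $r$, so $F_k$ is Borel. The sets $F_k$ increase, and by Step (b) their union has full measure. If $s>k$, then $F_k$ is contained in $\{x:\mu(\Ball{1/s}x)\ge e^{-\eta s}\}$. Hence the Lebesgue measure of this set tends to $1$ as $s\to\infty$, which is \eqref{eq:lambda}.
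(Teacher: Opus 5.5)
Your proposal is correct and follows the paper's proof essentially step for step: the survivor argument of Lemma~\ref{lem:survivor} gives $L_n(x)\to0$, the conjugacy transfer of Proposition~\ref{prop:pointwise} gives $A_\mu(x)\le C\,L_n(x)$ with $n$ fixed before $r\downarrow0$, and a limit argument then yields $\lambda^*$. The differences are cosmetic: you use the threshold form of $(W)$ instead of $A_\mu(z)\le C$, you replace dominated convergence by continuity from below along the increasing sets $F_k$, and you explicitly justify $\mathcal S_m\ne\emptyset$, which the paper leaves implicit.
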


Neither the source maps $\{f_i\}_i$ nor the target maps $\{g_i\}_i$ are required to be differentiable.
No monotonicity of the source derivatives or prescribed convexity or concavity is assumed.
The proof uses both the conjugate equation and $(R)$.
Theorem~\ref{thm:main} does not assert that $(W)$ is sufficient for arbitrary measures.
In Section~\ref{sec:proof}, we construct the exceptional set determined by $(R)$.
Proposition~\ref{prop:pointwise} gives the pointwise estimate, including the stronger conclusion under $(UW)$.
The uniform lower estimate $(UW)$ was obtained for linear fractional systems in \cite[Proposition~2.13]{Okamura2026}.
The mesh condition $(M)$ introduced below is sufficient for this estimate and is not required in the theorem.

\section{Proof of the main result}\label{sec:proof}

We prove Theorem~\ref{thm:main} by excluding two Lebesgue null sets.
First, $(R)$ gives the exceptional set $Z_R$ in Lemma~\ref{lem:survivor}, outside which the prefix Lipschitz constants $L_n(x)$ tend to zero.
Second, $(W)$ gives a Lebesgue null set $E\subset(0,1)$ and a uniform bound $A_\mu(z)\le C$ for $z\in(0,1)\setminus E$.
We enlarge this set to $\bigcup_{\sigma\in\A^*}f_\sigma(E)$, which is null because every finite composition $f_\sigma$ is Lipschitz and $\A^*$ is countable.

For $x$ outside both exceptional sets, every inverse image $z=f_\sigma^{-1}(x)$ along its address belongs to $(0,1)\setminus E$.
In Proposition~\ref{prop:pointwise}, we use the conjugacy to transfer the bound at $z$ to $x$: letting the radius tend to zero with the prefix of length $k$ fixed gives $A_\mu(x)\le C L_k(x)$.
Letting $k\to\infty$ yields $A_\mu(x)=0$ for Lebesgue almost every $x$.
Finally, dominated convergence gives condition $\lambda^*$ and hence UST regularity.

The following lemma gives the exceptional null set associated with $(R)$ and proves that the prefix Lipschitz constants tend to zero outside this set.

\begin{Lem}\label{lem:survivor}
Assume \eqref{eq:basic-f} and $\Leb(K_m) = 0$. 
Let
\begin{equation*}
 Z_R \coloneqq D_f \cup \bigcup_{\sigma\in\A^*} f_\sigma(K_m).
\end{equation*}
Then $\Leb(Z_R)=0$. 
For every $x\notin Z_R$, we have
\begin{equation}\label{eq:Ldecay}
\lim_{n \to \infty} L_n (x) = 0.
\end{equation}
\end{Lem}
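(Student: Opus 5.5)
The proof has two parts: first show that $Z_R$ is a Lebesgue null set, then show that every $x\notin Z_R$ has infinitely many pairwise disjoint strict blocks of length $m$ in its address, which forces $L_n(x)\to0$.

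\emph{Nullity.} The set $D_f$ is countable, since $\A^*$ is countable and each word contributes two points, so $\Leb(D_f)=0$. For each $\sigma\in\A^*$ the map $f_\sigma$ is $1$-Lipschitz, being a composition of $1$-Lipschitz maps. A Lipschitz map sends Lebesgue null sets to null sets, so $\Leb(f_\sigma(K_m))\le\Leb(K_m)=0$. A countable union of null sets is null, hence $\Leb(Z_R)=0$.

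\emph{Infinitely many strict blocks.} Fix $x\notin Z_R$. Since $x\notin D_f$, it has the unique address $\omega=\omega(x)$. Let $S$ denote the left shift, so $(S\omega)_j=\omega_{j+1}$.
\begin{itemize}
\item For each $n\ge0$ set $\sigma=\omega_1\cdots\omega_n$ and $y_n\coloneqq\pi_f(S^n\omega)$.
\item From the nested-intersection definition of $\pi_f$ and continuity of $f_\sigma$, we get $f_\sigma(y_n)=\pi_f(\omega)=x$.
\item Since $x\notin f_\sigma(K_m)$, we get $y_n\notin K_m$.
\item No address of $y_n$ lies in $\Sigma_m^0$. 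In particular $S^n\omega\notin\Sigma_m^0$.
\item Hence $\mathcal J_m(\omega)$ contains some $j\ge n+1$.
\end{itemize}
Because $n$ is arbitrary, this gives infinitely many strict blocks. We then choose starting positions $j_1<j_2<\cdots$ in $\mathcal J_m(\omega)$ with $j_{k+1}\ge j_k+m$, so the occurrences are pairwise disjoint.

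A subtle point is that $y_n$ might have several addresses (if it lies in $D_f$). The argument avoids this by passing through $f_\sigma(K_m)$: membership of $y_n$ in $K_m$ via any address would already place $x$ in $f_\sigma(K_m)$.

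\emph{Decay.} Set $\kappa\coloneqq\max_{\tau\in\mathcal S_m}\Lip(f_\tau)$. This is a maximum over a finite set, so $\kappa<1$. When $\mathcal S_m=\emptyset$, $\mathcal J_m(\omega)$ is always empty and $x\notin Z_R$ is impossible: then $K_m=[0,1]$, so $\Leb(K_m)=1$, contradicting the hypothesis. Now take $n\ge j_k+m-1$ and split the prefix $\omega_1\cdots\omega_n$ into the $k$ strict blocks and the remaining letters. Using submultiplicativity of Lipschitz constants under composition, and $\Lip(f_i)\le1$ for the leftover letters,
\[
L_n(x)\le\prod_{l=1}^k\Lip\bigl(f_{\omega_{j_l}\cdots\omega_{j_l+m-1}}\bigr)\le\kappa^k .
\]
Since $k\to\infty$ as $n\to\infty$, and $(L_n(x))_n$ is nonincreasing, we conclude \eqref{eq:Ldecay}.

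The main obstacle is the step from $x\notin f_\sigma(K_m)$ to the statement about the shifted word, specifically the identity $f_\sigma(\pi_f(S^n\omega))=\pi_f(\omega)$ and the handling of possibly non-unique addresses of $y_n$. Both are dealt with above using only the definition of $\pi_f$.
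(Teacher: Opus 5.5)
Your proof is correct and follows essentially the same route as the paper. Nullity comes from countability of $D_f$ and Lipschitz images of the null set $K_m$. No tail $S^n\omega$ of the address can lie in $\Sigma_m^0$; the paper phrases this contrapositively, writing $\omega=\sigma\nu$ with $\nu\in\Sigma_m^0$ and concluding $x\in f_\sigma(K_m)$. The same disjoint-block submultiplicativity bound $L_n(x)\le\kappa^\ell$ then finishes the argument, and your checks of $f_\sigma(\pi_f(S^n\omega))=x$ and $\mathcal S_m\ne\emptyset$ only make explicit what the paper uses implicitly.
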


\begin{proof}
The set $D_f$ is countable, and every $f_\sigma$ is Lipschitz.
Thus each $f_\sigma(K_m)$ is null since $\Leb(K_m) = 0$.
Since $\A^*$ is countable, the union is null.

Let $x\notin Z_R$ and denote its unique address by $\omega$.
If $\mathcal J_m(\omega)$ were finite, we could write $\omega=\sigma\nu$ with $\nu \in\Sigma_m^0$.
Since $\pi_f (\nu) \in \pi_f \left( \Sigma_m^0 \right) = K_m$, we have $x\in f_\sigma(K_m)\subset Z_R$, a contradiction.
Hence $\mathcal J_m(\omega)$ is infinite.

Choose $j_1<j_2<\cdots$ in $\mathcal J_m(\omega)$ with $j_{k+1}\ge j_k + m$, so that the corresponding occurrences are pairwise disjoint.
Since $\mathcal S_m$ is finite and nonempty, $\kappa \coloneqq \max_{\tau\in\mathcal S_m} \Lip(f_\tau) < 1$.
For every $\ell \ge1$ and every $n\ge j_{\ell} +m-1$, the prefix $\omega_1\cdots\omega_n$ contains the first $\ell$ selected occurrences. 
By submultiplicativity,
\[ L_n(x) \le \prod_{s=1}^{\ell} \Lip\!\left(f_{\omega_{j_s}\cdots\omega_{j_s+m-1}}\right) \le \kappa^{\ell}, \qquad n\ge j_{\ell} +m-1.\]
Thus $\limsup_{n\to\infty} L_n(x)\le \kappa^{\ell}$ for every $\ell \ge1$.
Letting $\ell \to\infty$ and using $\kappa<1$, we obtain \eqref{eq:Ldecay}. 
\end{proof}

We combine \eqref{eq:Ldecay} with the mass conditions to obtain pointwise estimates for $\mu$.

\begin{Prop}\label{prop:pointwise}
Assume \eqref{eq:basic-f}--\eqref{eq:basic-g} and $\Leb(K_m) = 0$, and let $Z_R$ be the set in Lemma~\ref{lem:survivor}.
If $(W)$ holds, then
\begin{equation}\label{eq:pointwise}
 \lim_{r\downarrow0}r\log\mu(\Ball r x)=0
\end{equation}
for Lebesgue almost every $x\in[0,1]$.
If $(UW)$ holds, then \eqref{eq:pointwise} holds at every $x\notin Z_R$.
\end{Prop}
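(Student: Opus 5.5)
The plan is to follow the outline at the start of this section: prove the pointwise transfer inequality $A_\mu(x)\le C\,L_k(x)$ for all $k$, then let $k\to\infty$ using Lemma~\ref{lem:survivor}. Since $\mu(\Ball r x)\le1$, we always have $r\log\mu(\Ball r x)\le0$, so \eqref{eq:pointwise} is equivalent to $A_\mu(x)=0$.

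\emph{Setup.} Under $(W)$, fix $C>\esssup A_\mu$ and a Lebesgue null set $E\subset(0,1)$ with $A_\mu(z)<C$ for $z\in(0,1)\setminus E$. Put $Z\coloneqq Z_R\cup\bigcup_{\sigma\in\A^*}f_\sigma(E)$. This set is null, because each $f_\sigma$ is $1$-Lipschitz and $\A^*$ is countable. Under $(UW)$, take $E=\emptyset$ and $C=C_1$. Now fix $x\notin Z$ with address $\omega$, fix $k\ge0$, and write $\sigma=\omega_1\cdots\omega_k$ and $z=f_\sigma^{-1}(x)$. Since $x\notin D_f$, we have $z=\pi_f(\omega_{k+1}\omega_{k+2}\cdots)\notin D_f$, so $z\in(0,1)$ lies in the interior of $[0,1]$. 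Also $z\notin E$, since otherwise $x\in f_\sigma(E)$.

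\emph{Transfer step.} Let $L=L_k(x)=\Lip(f_\sigma)$. For $t$ small, namely $t<\min\{z,1-z\}$ and also $t<t_0(z)$ in the $(W)$ case, the monotonicity of $f_\sigma$ gives
\[ f_\sigma(\Ball t z)=[f_\sigma(z-t),f_\sigma(z+t)]\subset\Ball{Lt}{x}. \]
Iterating the conjugacy \eqref{eq:conjugacy} gives $\varphi\circ f_\sigma=g_\sigma\circ\varphi$, so
\[ \mu(\Ball{Lt}{x})\ge\mu(f_\sigma(\Ball t z))=g_\sigma(\varphi(z+t))-g_\sigma(\varphi(z-t))\ge c_g^{k}\,\mu(\Ball t z)\ge c_g^{k}e^{-C/t}. \]
Here the middle inequality uses the difference-quotient lower bound $c_g$ from \eqref{eq:basic-g} applied $k$ times. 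Setting $r=Lt$, we get
\[ -r\log\mu(\Ball r x)\le -Lt\,k\log c_g + CL. \]
Letting $r\downarrow0$ with $k$ fixed gives $A_\mu(x)\le C\,L_k(x)$.

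\emph{Conclusion.} Since $x\notin Z_R$, Lemma~\ref{lem:survivor} gives $L_k(x)\to0$, hence $A_\mu(x)=0$, which is \eqref{eq:pointwise} outside the null set $Z$. Under $(UW)$ no set $E$ is needed and the threshold $t<\min\{z,1-z\}$ is exactly the range in $(UW)$, so the argument works at every $x\notin Z_R$.

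\emph{Main obstacle.} The delicate point is the bookkeeping of the exceptional sets. One must check that $z$ stays in the open interval and off $E$ for every $k$ simultaneously; this is why $f_\sigma(E)$ is removed for every $\sigma$ and why $D_f$ is excluded. One must also handle the fact that the constant $c_g^k$ degrades with $k$. That loss is harmless because it multiplies only the $O(r)$ term, which vanishes as $r\to0$ before $k\to\infty$. The degenerate case $L=0$ cannot occur, since $f_\sigma$ is strictly increasing.
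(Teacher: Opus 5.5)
Your proposal is correct and follows essentially the same argument as the paper. You remove $Z_R\cup\bigcup_{\sigma\in\A^*}f_\sigma(E)$, pull $x$ back along its address to $z=f_\sigma^{-1}(x)\in(0,1)\setminus E$, use $\mu(f_\sigma(J))\ge c_g^{k}\mu(J)$ to obtain $A_\mu(x)\le C\,L_k(x)$, and let $k\to\infty$ via Lemma~\ref{lem:survivor}; the only differences are cosmetic, namely your parametrization $r=L_k(x)\,t$ and your explicit thresholds $t_0(z)$ with $C>\esssup A_\mu$ in place of passing the upper limit through directly.
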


\begin{proof}
For every finite word $\sigma$, iterating the lower bound in \eqref{eq:basic-g} gives
\[
 g_\sigma(v)-g_\sigma(u)\ge c_g^{|\sigma|}(v-u),
 \qquad 0\le u<v\le1.
\]
Thus, for every interval $J=[u,v]\subset[0,1]$, the conjugacy \eqref{eq:conjugacy} gives
\begin{equation}\label{eq:mu-f-ineq}  
\mu(f_\sigma(J)) = g_\sigma(\varphi(v)) - g_\sigma(\varphi(u)) \ge c_g^{|\sigma|}\mu(J). 
\end{equation}
By $(W)$, choose $C<\infty$ and a Borel set $E\subset(0,1)$ such that $\Leb(E) = 0$ and $A_\mu(z)\le C$ for $z\in(0,1)\setminus E$.
Let
\begin{equation*}
 \mathcal N \coloneqq Z_R \cup \bigcup_{\sigma\in\A^*} f_\sigma(E).
\end{equation*}
Lemma~\ref{lem:survivor} gives $\Leb(Z_R)=0$.
Each $f_\sigma(E)$ is null because $f_\sigma$ is Lipschitz, so $\Leb(\mathcal N)=0$.

Fix $x \notin \mathcal N$ and a prefix $\sigma = \omega_1\cdots\omega_k$ of its address. 
Since $f_\sigma(0),f_\sigma(1)\in D_f$ and $x\notin D_f$, we have $z  = f_\sigma^{-1}(x) \in (0,1) \setminus E$. 
By the definition of $L_k(x)$, we have $\Lip(f_\sigma) = L_k(x)>0$.
For sufficiently small $r>0$, $\Ball{r/L_k(x)}z \subset (0,1)$ and $f_\sigma(\Ball{r/L_k(x)}z)\subset\Ball r x$.
Hence, by \eqref{eq:mu-f-ineq}, $\mu(\Ball r x)\ge c_g^k\mu(\Ball{r/L_k(x)}z)$.
Consequently,
\[ -r\log\mu(\Ball r x) \le kr|\log c_g|+L_k(x)\left(-\frac{r}{L_k(x)} \log\mu(\Ball{r/L_k(x)}z)\right).\]
First let $r\downarrow0$, keeping the prefix $\sigma$ fixed, to obtain
\begin{equation*}
 A_\mu(x)\le L_k(x) A_\mu(z)\le C L_k(x).
\end{equation*}
Then let $k\to\infty$ and apply \eqref{eq:Ldecay}.
This gives $A_\mu(x)=0$, which is equivalent to \eqref{eq:pointwise}.

Under $(UW)$, $A_\mu(z) \le C_1$ for every $z\in(0,1)$.
The same argument with $E = \emptyset$ proves the conclusion for every $x \notin Z_R$.
\end{proof}

\begin{proof}[Proof of Theorem~\ref{thm:main}]
By Proposition~\ref{prop:pointwise}, for each fixed $\eta>0$, the indicators of the sets in \eqref{eq:lambda} tend to $1$ Lebesgue almost everywhere as $s\to\infty$.
Dominated convergence gives \eqref{eq:lambda}.
Criterion $\lambda^*$ in \cite[Criterion 1.1]{ManticaTotik} then yields $\mu\in\Reg([0,1])$.
\end{proof}

\section{Finite equality sets}\label{sec:finite}

We give a sufficient condition for $(R)$ in Theorem \ref{thm:main} in terms of derivatives.
The resulting criterion also gives a countable exceptional set for the pointwise estimates under $(UW)$.

The condition requires the derivative to equal $1$ at only finitely many points for every composition of a fixed length.

\begin{Def}\label{def:finite}
Suppose that the source maps $\{f_i\}_i$ belong to $C^1([0,1])$.
We say that $(F)$ holds for $\{f_i\}_i$ if there exists an integer $p\ge1$ such that $E_\tau \coloneqq \{x\in[0,1]:f_\tau'(x)=1\}$ is finite for every $\tau \in \A^p$.
\end{Def}

The next lemma relates this finiteness condition to the occurrence of strict blocks along infinite words.

\begin{Lem}\label{lem:finite}
Let $\{h_i\}_{i\in\A}$ be compatible, with $h_i \in C^1([0,1])$, $0\le h_i'\le1$, and all sets $E_i = \{h_i'=1\}$ finite.
Then the mesh for $\{h_i\}_i$ vanishes, so the coding map $\pi_h$ is well-defined.
Let $A \coloneqq \bigcup_i h_i(E_i)$.
Then there exists an integer $m\ge1$, depending only on the family $\{h_i\}_i$, such that every infinite word $\omega$ satisfies the following assertions according to the cardinality of $\mathcal J_m(h;\omega)$:\\
(i) If $\#\mathcal J_m(h;\omega) = 0$, then $\pi_h(\omega)\in A$.\\
(ii) If $0<\#\mathcal J_m(h;\omega) < \infty$, then $\pi_h(\omega)\in\bigcup_{\sigma\in\A^*}h_\sigma(A)$.\\
(iii) If $\#\mathcal J_m(h;\omega) = \infty$, then $\lim_{n \to \infty} \Lip(h_{\omega_1\cdots\omega_n}) = 0$.
\end{Lem}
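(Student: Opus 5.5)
The plan has three stages: prove vanishing mesh, reduce (ii) and (iii) to (i), and then prove (i) through nested finite sets of ``neutral points'', with the choice of $m$ as the main difficulty.

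\textbf{Vanishing mesh.} Suppose it fails. Then for some $\omega$ the nested cylinders $h_{\omega_1\cdots\omega_n}([0,1])$ decrease to a nondegenerate interval $[a,b]$. Put $J_n\coloneqq h_{\omega_1\cdots\omega_n}^{-1}([a,b])$; these are intervals with $J_n=h_{\omega_{n+1}}(J_{n+1})$. Since $0\le h_i'\le1$, the lengths $|J_n|$ are nondecreasing and bounded by $1$, so they converge, and each is at least $b-a$. Hence $|h_{\omega_{n+1}}(J_{n+1})|/|J_{n+1}|\to1$. Only finitely many maps are involved, and the intervals $J_{n+1}$ have length at least $b-a$, so I can extract a subsequence along which $\omega_{n+1}=i$ is constant and $J_{n+1}\to J$, a nondegenerate interval. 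By continuity, $|h_i(J)|=|J|$. Since $h_i'\le1$, this forces $h_i'=1$ on $J$, contradicting finiteness of $E_i$. Hence $\pi_h$ is well defined.

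\textbf{Cases (iii) and (ii).} Case (iii) is the argument of Lemma~\ref{lem:survivor} verbatim. Select infinitely many disjoint strict occurrences and use $\kappa=\max_{\tau\in\mathcal S_m(h)}\Lip(h_\tau)<1$ (when $\mathcal S_m(h)$ is empty, (iii) is vacuous) together with submultiplicativity. For (ii), let $j_0$ be the last strict position and write $\omega=\sigma\nu$ with $|\sigma|=j_0$. Then $\mathcal J_m(h;\nu)=\emptyset$, so once (i) is proved, $\pi_h(\nu)\in A$ and $\pi_h(\omega)=h_\sigma(\pi_h(\nu))\in h_\sigma(A)$. So everything reduces to (i).

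\textbf{Case (i).} For a $C^1$ map $h_\tau$ on $[0,1]$, $\Lip(h_\tau)=\max h_\tau'$. By the chain rule with all factors in $[0,1]$, $h_\tau'(y)=1$ holds iff $y\in E_{\tau_n}$, $h_{\tau_n}(y)\in E_{\tau_{n-1}}$, and so on along the orbit. Set $P_n\coloneqq\{y: h_{\omega_1\cdots\omega_n}'(y)=1\}$ and $Q_n\coloneqq h_{\omega_1\cdots\omega_n}(P_n)$; both are finite sets. The chain-rule characterization gives $h_{\omega_{n+1}}(P_{n+1})\subset P_n$, hence $Q_{n+1}\subset Q_n$, and $Q_n\subset h_{\omega_1\cdots\omega_n}([0,1])$. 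If every $Q_n$ is nonempty, the nested nonempty compact sets have nonempty intersection. That intersection is contained in $\bigcap_n h_{\omega_1\cdots\omega_n}([0,1])=\{\pi_h(\omega)\}$. Therefore $\pi_h(\omega)\in Q_1=h_{\omega_1}(E_{\omega_1})\subset A$.

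So (i) follows once $m$ is chosen so that ``every length-$m$ block of $\omega$ is nonstrict'' implies ``every prefix of $\omega$ is nonstrict''. This choice of $m$ is the main obstacle. My plan is to encode nonstrictness by a finite labelled graph $G$:
\begin{itemize}
\item the vertex set is $E=\bigcup_iE_i$;
\item there is an edge $y\to z$ labelled $i$ when $y\in E_i$ and $z=h_i(y)\in E$.
\end{itemize}
A word is nonstrict iff its reversal labels a path in $G$, possibly ending with one final step leaving $E$. By compatibility and strict monotonicity, each vertex has at most one outgoing edge per label. I will then use the following observation: a long path in this finite graph must revisit a vertex, and overlapping length-$m$ windows that are each realized by paths must agree on their middle portions once $m>\#E+1$. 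The intended argument is a pigeonhole/determinism argument showing that consecutive windows can be glued into a single path.

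If this gluing fails in general, my fallback is compactness. Suppose that for every $m$ there is a word $\omega^{(m)}$ all of whose length-$m$ blocks are nonstrict but some prefix of which is strict. Pass to a limit word and derive a contradiction with the finiteness of the sets $E_i$. This step is where I expect the real work to lie.
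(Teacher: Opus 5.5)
Your vanishing-mesh argument is correct, though it takes a different route from the paper's function $T$; it still needs the K\"onig/compactness step producing one $\omega$ all of whose cylinders have length at least $\lim_n\delta_n(h)>0$. Your handling of (ii) and (iii) is also correct. So is your reduction of (i), via the nested finite sets $Q_n$, to the implication ``every length-$m$ block of $\omega$ is nonstrict $\Rightarrow$ every prefix is nonstrict.''

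The gap is that implication itself, and the mechanism you propose for it does not work. Determinism of $G$ (at most one outgoing edge per label) only says that two paths carrying the same labels stay together \emph{once they meet}. Nothing in the graph structure forces the witness path of the window starting at $j$ and that of the window starting at $j+1$ to meet on their overlap. In an abstract finite deterministic labelled graph, two disjoint cycles $y_1\to y_2\to y_1$ and $y_1'\to y_2'\to y_1'$ with labels $a,b$ carry the same label sequence forever. A single path revisiting a vertex says nothing about two different paths. So the threshold $m>\#E+1$ is unsupported, and pigeonhole on vertices cannot give the gluing. Your compactness fallback also fails as stated. The strict prefix of $\omega^{(m)}$ has length $>m$, so it disappears in the limit, and the limit word merely has all prefixes nonstrict, which is no contradiction.

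The missing idea is geometric, and you already have its ingredient, vanishing mesh. Let $u_j$ be a point where the derivative of the window composition at $j$ equals $1$. The witness of window $j$ at position $j+1$ is $v_j=h_{\omega_{j+2}\cdots\omega_{j+m-1}}(u_j)\in E_{\omega_{j+1}}$. The witness of window $j+1$ at the same position is $z_{j+1}=h_{\omega_{j+2}\cdots\omega_{j+m}}(u_{j+1})\in E_{\omega_{j+1}}$. Both lie in the single level-$(m-2)$ cylinder $h_{\omega_{j+2}\cdots\omega_{j+m-1}}([0,1])$.

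The paper chooses $r$ so that every level-$r$ cylinder contains at most one point of the finite set $E$, and sets $m=r+2$. Then $v_j=z_{j+1}$. Hence $z_j=h_{\omega_{j+1}}(z_{j+1})$ with $z_j\in E_{\omega_j}$ for every $j$, so $h_{\omega_1}(z_1)$ lies in every prefix cylinder and $\pi_h(\omega)\in A$.

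A correct pigeonhole version also exists, but it runs on \emph{pairs}. Suppose two label-identical paths avoid each other for more than $(\#E)^2$ steps. Then some ordered pair of distinct points $a<b$ recurs, so some composition $h_\beta$ fixes both $a$ and $b$. Being increasing and $1$-Lipschitz, $h_\beta$ is the identity on $[a,b]$, which forces $h_i'=1$ on an interval. This too uses the maps, not just the graph.
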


\begin{proof}
Since $E_i$ is finite, $h_i(y)-h_i(x) = \int_x^y h_i^{\prime}(t) dt <y-x$ for $x<y$.
For $t \in [0,1]$, define
\[ T(t) \coloneqq \max_{i\in\A}\max_{0\le x\le1-t}   \left(h_i(x+t)-h_i(x)\right).\]
This function is continuous and nondecreasing, with $T(0)=0$ and $T(t)<t$ for $t>0$.
Hence $\lim_{n \to \infty} T^n(1) = 0$ and in particular $\lim_{n \to \infty} \delta_n(h) = 0$.
Thus the intersection of the nested cylinders is a singleton and defines the coding map $\pi_h$.

We remark that the set $E \coloneqq \bigcup_i E_i$ is finite.
Choose $r \ge 1$ so large that every level-$r$ cylinder contains at most one point of $E$.
Such an $r$ exists because the mesh vanishes.
Let $m \coloneqq r+2$.

(i) First suppose that $\mathcal J_m(h;\omega) = \emptyset$, so every length-$m$ block of $\omega$ belongs to $\mathcal U_m(h)$. 
Since each such composition has Lipschitz constant $1$, its continuous derivative attains the value $1$ at some $u_j\in[0,1]$, that is, $(h_{\omega_j}\circ\cdots\circ h_{\omega_{j+m-1}})'(u_j)=1$.

For $j \ge 1$, let
\[ z_j \coloneqq h_{\omega_{j+1}}\circ\cdots\circ h_{\omega_{j+m-1}}(u_j), \qquad v_j \coloneqq h_{\omega_{j+2}}\circ\cdots\circ h_{\omega_{j+m-1}}(u_j). \]
Then
\[ (h_{\omega_{j+2}}\circ\cdots\circ h_{\omega_{j+m-1}})'(u_j) h_{\omega_{j+1}}' (v_j) h_{\omega_j}' (z_j)  = 1. \]
Hence $h_{\omega_j}' (z_j) = h_{\omega_{j+1}}' (v_j) = 1$, that is, $z_j \in E_{\omega_j}$ and $v_j \in E_{\omega_{j+1}}$.

Both $v_j$ and $z_{j+1}$ belong to $E \cap h_{\omega_{j+2}\cdots\omega_{j+m-1}}([0,1])$.
Thus $v_j = z_{j+1}$ and $z_j=h_{\omega_{j+1}}(z_{j+1})$.
It follows that $h_{\omega_1}(z_1)$ lies in every prefix cylinder of $\omega$, and hence $\pi_h(\omega)=h_{\omega_1}(z_1)\in A$, proving (i).

(ii) If $0<\#\mathcal J_m(h;\omega)<\infty$, then a tail of $\omega$ contains no strict length-$m$ block, so (i) applied to this tail gives (ii).

(iii) Finally, suppose that $\#\mathcal J_m(h;\omega)=\infty$.
Choose starting positions $j_k$ in this set with $j_{k+1}\ge j_k+m$.
These occurrences are pairwise disjoint, and
\[ \kappa=\max_{\tau\in\mathcal S_m(h)}\Lip(h_\tau)<1,\]
since $\mathcal S_m(h)$ is finite and nonempty.
If a prefix contains $k$ selected occurrences, submultiplicativity and $\Lip(h_i)\le1$ bound its Lipschitz constant by $\kappa^k$.
This tends to zero, proving (iii).
\end{proof}

Applying this lemma to finite compositions of the source maps $\{f_i\}_i$ gives regularity criteria and a countable exceptional set.

\begin{Prop}\label{prop:finite}
Assume \eqref{eq:basic-f}, with $f_i\in C^1([0,1])$ for every $i$. 
Fix an integer $p\ge1$ and assume that $E_\tau$ is finite for each $\tau\in\A^p$.
Let
\begin{equation}\label{eq:exceptional-countable}
 A_p \coloneqq \bigcup_{\tau\in\A^p}f_\tau(E_\tau),
 \qquad
Z_p \coloneqq D_f\cup\bigcup_{\sigma\in\A^*}f_\sigma(A_p).
\end{equation}
Then the following assertions hold.\\
(i) There exists an integer $m_0$ divisible by $p$ such that $K_{m_0} \subset A_p$, and $A_p$ is finite.
The set $Z_p$ is countable, and \eqref{eq:Ldecay} holds at every $x\notin Z_p$.\\
(ii) If the target maps $\{g_i\}_i$ satisfy \eqref{eq:basic-g} and $\mu$ satisfies $(W)$, then $\mu$ is UST regular and \eqref{eq:pointwise} holds Lebesgue almost everywhere.\\
(iii) If the target maps $\{g_i\}_i$ satisfy \eqref{eq:basic-g} and $\mu$ satisfies $(UW)$, then $\mu$ is UST regular and \eqref{eq:pointwise} holds at every $x\notin Z_p$.
\end{Prop}

\begin{proof}
Consider the alphabet $\mathcal B \coloneqq \A^p$ and the maps $h_\tau \coloneqq f_\tau$ for $\tau\in\mathcal B$.
Their image intervals are precisely the level-$p$ cylinders of the source system $\{f_i\}_i$, so the family $\{h_\tau\}_{\tau\in\mathcal B}$ is compatible when ordered lexicographically.
Each $h_\tau$ belongs to $C^1([0,1])$, and the chain rule gives $0 \le h_\tau' \le 1$.
Moreover, $\{x:h_\tau'(x)=1\}=E_\tau$ is finite by assumption.
Thus Lemma~\ref{lem:finite} applies, with $A$ equal to $A_p$.
Let $m$ be the integer supplied by the lemma, and let $m_0 \coloneqq pm$.
Here $m$ counts symbols in $\mathcal B$, whereas $m_0$ counts symbols in the original alphabet $\A$.

For any $\omega\in\A^{\mathbb N}$, group its symbols into consecutive words of length $p$ by setting
\[ \eta_j \coloneqq \left(\omega_{(j-1)p+1},\ldots,\omega_{jp} \right) \in\mathcal B, \quad j\ge1.\]
Then $\eta \coloneqq (\eta_j)_{j\ge1}\in\mathcal B^{\mathbb N}$ satisfies $h_{\eta_1\cdots\eta_n}=f_{\omega_1\cdots\omega_{np}}$ for every $n\ge 1$.
Consequently, $\pi_h(\eta)=\pi_f(\omega)$, since the cylinders for $\eta$ are the level-$np$ cylinders for $\omega$ and their diameters tend to zero.

We first prove the assertions in (i).
If $\omega\in\Sigma_{m_0}^0$, every length-$m_0$ block of $\omega$ is nonstrict.
For each $j\ge1$, the corresponding length-$m$ block of $\eta$ satisfies $h_{\eta_j\cdots\eta_{j+m-1}} = f_{\omega_{(j-1)p+1}\cdots\omega_{(j+m-1)p}}$.
Thus $\Lip(h_{\eta_j\cdots\eta_{j+m-1}})=1$, and $\mathcal J_m(h;\eta)=\emptyset$.
Lemma~\ref{lem:finite}(i) gives $\pi_f(\omega)=\pi_h(\eta)\in A_p$.
Therefore $K_{m_0} = \pi_f \left(\Sigma_{m_0}^0\right) \subset A_p$.
The set $A_p$ is finite because it is a finite union of images of the finite sets $E_\tau$.
Since $D_f$ and $\A^*$ are countable, the definition of $Z_p$ in \eqref{eq:exceptional-countable} shows that $Z_p$ is countable.

Next fix $x\notin Z_p$.
Since $D_f\subset Z_p$, the point $x$ has a unique address $\omega$, and $L_n(x)$ is well-defined for each $n \ge 1$.
Form the corresponding grouped address $\eta$ as above, so that $x=\pi_h(\eta)$.
If $\mathcal J_m(h;\eta)$ were finite, Lemma~\ref{lem:finite}(i) or (ii) and the definition of $Z_p$ in \eqref{eq:exceptional-countable} would give
\[ x\in\bigcup_{\beta\in\mathcal B^*} h_\beta(A_p) \subset\bigcup_{\sigma\in\A^*}f_\sigma(A_p) \subset Z_p,\]
which is a contradiction.
Here we use $A_p = h_{\emptyset}(A_p)$ and $\emptyset \in \mathcal B^*$.

Therefore $\mathcal J_m(h;\eta)$ is infinite, and Lemma~\ref{lem:finite}(iii) yields
\[ \lim_{n \to \infty} L_{np}(x) = \lim_{n \to \infty} \Lip(h_{\eta_1\cdots\eta_n}) = 0. \]
This gives convergence first along the lengths divisible by $p$.
Since each $f_i$ is $1$-Lipschitz, the sequence $L_n(x)$ is nonincreasing in $n$.
Thus, for an arbitrary integer $k\ge p$, we have $ 0\le L_k(x)\le L_{p\lfloor k/p\rfloor}(x)$.
The right-hand side tends to zero as $k\to\infty$, proving \eqref{eq:Ldecay} and completing (i).

For (ii) and (iii), assume also that the target maps $\{g_i\}_i$ satisfy \eqref{eq:basic-g}.
Since $K_{m_0}\subset A_p$ and $A_p$ is finite, $\Leb(K_{m_0})=0$.
For this choice of $m_0$, the exceptional set in Lemma~\ref{lem:survivor} satisfies
\[ Z_R = D_f\cup\bigcup_{\sigma\in\A^*}f_\sigma(K_{m_0}) \subset D_f\cup\bigcup_{\sigma\in\A^*}f_\sigma(A_p) = Z_p.\]
If $(W)$ holds, Proposition~\ref{prop:pointwise} gives \eqref{eq:pointwise} Lebesgue almost everywhere, and Theorem~\ref{thm:main} gives UST regularity, proving (ii).
If $(UW)$ holds, Proposition~\ref{prop:pointwise} gives \eqref{eq:pointwise} at every $x\notin Z_R$, hence at every $x\notin Z_p$.
Since $(UW)$ implies $(W)$, UST regularity follows from (ii), proving (iii).
\end{proof}

\section{Uniform mesh bounds}\label{sec:mesh}

We give mass estimates and regularity criteria under a uniform bound on the source mesh.
We also give consequences of $C^2$ regularity and contraction tests for finite compositions.

We impose a uniform $O(1/n)$ bound on the lengths of source cylinders $f_{\sigma}([0,1])$.

\begin{Def}\label{def:mesh}
The source system $\{f_i\}_i$ satisfies the uniform mesh bound condition $(M)$ if there exists $C_0 > 0$ such that $\delta_n(f)\le\frac{C_0}{n+1}$ for every $n \ge 0$.
\end{Def}

This is the bound in \cite[Lemma~8.4, equation~(8.10)]{ManticaTotik}.

Under \eqref{eq:basic-f} and $(M)$, the source maps $\{f_i\}_i$ satisfy the following estimate.
Choose $C_0>0$ as in $(M)$.
For $x\in[0,1]\setminus D_f$ and integers $0\le k<n$, splitting the address of $x$ after its first $k$ symbols gives
\[ |I_n(x)|\le L_k(x)\delta_{n-k}(f) \le L_k(x)\frac{C_0}{n-k+1}.\]
Multiplying by $n$ and taking the upper limit as $n\to\infty$ with $k$ fixed, we obtain
\[ 0\le\limsup_{n\to\infty}n|I_n(x)|\le C_0L_k(x).\]
Consequently, at every $x\in[0,1]\setminus D_f$ for which $\lim_{k \to \infty} L_k (x) = 0$, we have
\begin{equation}\label{eq:littleo}
 \lim_{n\to\infty}n|I_n(x)|=0.
\end{equation}

The uniform mesh bound gives $(UW)$ and, together with $(R)$, yields UST regularity and the cylinder estimates below.

\begin{Prop}\label{prop:mesh}
Assume \eqref{eq:basic-f}--\eqref{eq:basic-g} and $(M)$.
Then the following hold.\\
(i) The measure $\mu$ satisfies $(UW)$.\\
(ii) If $(R)$ holds, then $\mu$ is UST regular, and furthermore, both \eqref{eq:pointwise} and \eqref{eq:littleo} hold at every $x\notin Z_R$, where $Z_R$ is the Lebesgue null set in Lemma~\ref{lem:survivor}.\\
(iii) If the source maps $\{f_i\}_i$ belong to $C^1([0,1])$ and satisfy $(F)$, then $\mu$ is UST regular.
Assume that for some $p \ge 1$, $E_\tau \coloneqq \{x\in[0,1]:f_\tau'(x)=1\}$ is finite for every $\tau \in \A^p$, and let $Z_p$ be the countable set in \eqref{eq:exceptional-countable}.
Then both \eqref{eq:pointwise} and \eqref{eq:littleo} hold at every $x\notin Z_p$.
\end{Prop}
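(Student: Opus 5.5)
The main work is part (i). Parts (ii) and (iii) will then follow from results already proved.

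For (i), fix $y\in(0,1)$ and $0<t\le\min\{y,1-y\}$, so that $\Ball t y=[y-t,y+t]\subset[0,1]$. I will find a source cylinder inside $\Ball t y$ of controlled depth. By $(M)$, every level-$n$ cylinder has length at most $C_0/(n+1)$. Take $n\coloneqq\lceil 2C_0/t\rceil$, so that $\delta_n(f)\le t/2$.

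The level-$n$ cylinders cover $[0,1]$, so some $\sigma\in\A^n$ has $y\in f_\sigma([0,1])$. This cylinder has length at most $t/2$, hence $f_\sigma([0,1])\subset[y-t/2,y+t/2]\subset\Ball t y$.

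Next I bound the mass of this cylinder from below. By the conjugacy, $\mu(f_\sigma([0,1]))=g_\sigma(1)-g_\sigma(0)$. Applying the iterated lower bound from the proof of Proposition~\ref{prop:pointwise} with $J=[0,1]$ (that is, \eqref{eq:mu-f-ineq}) gives
\[
\mu(f_\sigma([0,1]))\ge c_g^{n}.
\]
Therefore
\[
\mu(\Ball t y)\ge c_g^{n}=\exp(-n|\log c_g|).
\]
Since $t\le 1/2$, we have $n\le 2C_0/t+1\le(2C_0+1)/t$. This gives $(UW)$ with $C_1=(2C_0+1)|\log c_g|$; if $c_g=1$ the bound is trivial.

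The one point to watch is that the estimate must hold for every $y$, including $y\in D_f$. This causes no difficulty, because we only need some level-$n$ cylinder containing $y$, not a unique address.

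For (ii), $(UW)$ implies $(W)$, so Theorem~\ref{thm:main} gives UST regularity. Proposition~\ref{prop:pointwise} under $(UW)$ gives \eqref{eq:pointwise} at every $x\notin Z_R$. Lemma~\ref{lem:survivor} gives $L_k(x)\to0$ for $x\notin Z_R$, and $Z_R\supset D_f$. So the discussion preceding the proposition, which derives \eqref{eq:littleo} from $(M)$ and $L_k(x)\to0$, yields \eqref{eq:littleo} at every $x\notin Z_R$.

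For (iii), assume $(F)$ with some $p$. Proposition~\ref{prop:finite}(i) gives \eqref{eq:Ldecay} at every $x\notin Z_p$. Since $D_f\subset Z_p$, the argument preceding the proposition again gives \eqref{eq:littleo} there. Proposition~\ref{prop:finite}(iii), combined with part (i), gives UST regularity and \eqref{eq:pointwise} at every $x\notin Z_p$.

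I expect no serious obstacle. The only delicate step is choosing $n$ of order $1/t$ so that a whole cylinder fits inside $\Ball t y$, since $(M)$ is precisely what makes this depth linear in $1/t$.
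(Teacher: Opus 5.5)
Your proposal is correct and follows the paper's proof essentially step for step. For (i) you use a level-$n$ cylinder containing $y$ with $n$ of order $C_0/t$, which lies in $\Ball t y$ by $(M)$ and has mass at least $c_g^n$; for (ii) and (iii) you combine Proposition~\ref{prop:pointwise}, Lemma~\ref{lem:survivor}, Proposition~\ref{prop:finite} and the estimate following Definition~\ref{def:mesh} exactly as the paper does, the only cosmetic difference being that the paper takes $n=\lceil C_0/t\rceil$, since a cylinder of length $<t$ containing $y$ already lies in $[y-t,y+t]$, so your factor $2$ is unnecessary.
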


\begin{proof}
For (i), choose $C_0>0$ as in $(M)$.
The conjugacy \eqref{eq:conjugacy} and iteration of the lower bound in \eqref{eq:basic-g} give
\begin{equation*}
 \mu(f_\sigma([0,1])) = g_\sigma(1)-g_\sigma(0) \ge c_g^{|\sigma|}.
\end{equation*}
Fix $y\in[0,1]$ and $0<t\le1$, and take $n \coloneqq \lceil C_0/t\rceil$.

Assume that $y \in f_{\sigma}([0,1])$ and $|\sigma| = n$.  
By $(M)$, $f_{\sigma}(1) - f_{\sigma}(0) \le \frac{C_0}{n+1} < t$ and hence $f_{\sigma}([0,1]) \subset \Ball{t}{y}$. 
Now we see that 
\[ \mu(\Ball t y)\ge \mu\left( f_{\sigma}([0,1]) \right) \ge c_g^n \ge \exp\!\left(-\frac{(C_0+1)|\log c_g|}{t}\right).\]
This proves $(UW)$ and hence $(W)$, completing (i).

For (ii), assume $(R)$.
By (i), Proposition~\ref{prop:pointwise} gives \eqref{eq:pointwise} at every $x\notin Z_R$, and Theorem~\ref{thm:main} gives UST regularity.
For the cylinder estimate, Lemma~\ref{lem:survivor} gives $\lim_{k \to \infty} L_k (x) = 0$ for every $x\notin Z_R$.
Therefore the estimate under $(M)$ following Definition~\ref{def:mesh}  gives \eqref{eq:littleo} at every such point.

For (iii), assume that the source maps $\{f_i\}_i$ belong to $C^1([0,1])$ and satisfy $(F)$, and choose $p$ and $Z_p$ as in the statement.
By (i), Proposition~\ref{prop:finite}(iii) gives UST regularity of $\mu$ and \eqref{eq:pointwise} at every $x\notin Z_p$.
Proposition~\ref{prop:finite}(i) gives $\lim_{k \to \infty} L_k(x)=0$ at every such point, so the estimate for $(M)$ following Definition~\ref{def:mesh} also gives \eqref{eq:littleo} there.
\end{proof}

We next derive $(F)$ and $(R)$ from $(M)$ for $C^2$ source maps $\{f_i\}_i$.

\begin{Prop}\label{prop:C2mesh}
Assume \eqref{eq:basic-f} and $(M)$, and suppose that $f_i \in C^2([0,1])$ for each $i$.
Then $(F)$ and $(R)$ hold.
\end{Prop}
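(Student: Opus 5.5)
The plan is to prove $(F)$ with $p=1$, that is, to show that each $E_i=\{x\in[0,1]:f_i'(x)=1\}$ is finite, and then to obtain $(R)$ from Proposition~\ref{prop:finite}(i). That proposition gives an integer $m_0$ with $K_{m_0}\subset A_1$ and $A_1$ finite, so $\Leb(K_{m_0})=0$, which is $(R)$. So all the work is in $(F)$, and I argue by contradiction.

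Suppose some $E_i$ is infinite. By compactness it has an accumulation point $x_0$. Since $f_i'$ is continuous, $f_i'(x_0)=1$. Rolle's theorem applied to $f_i'-1$ between nearby points of $E_i$ gives points where $f_i''=0$ accumulating at $x_0$, so $f_i''(x_0)=0$. Moreover $f_i'\le1$ everywhere, so $x_0$ is a maximum of $f_i'$.

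Let $\varepsilon(t)\coloneqq\sup_{|x-x_0|\le t}|f_i''(x)|$, which tends to $0$ as $t\downarrow0$ because $f_i\in C^2$. Then $f_i'(x)\ge1-\varepsilon(t)\,t$ for $|x-x_0|\le t$. Hence any interval $J\ni x_0$ of length $t$ satisfies
\[
|f_i(J)|\ge t\bigl(1-\varepsilon(t)\,t\bigr)=t-o(t^2).
\]
By contrast, $(M)$ forces cylinder lengths to decay like $C_0/(n+1)$. The heuristic is that a map with an $o(t^2)$ length loss cannot produce $1/n$ decay. Indeed, if $\ell_{k+1}\ge\ell_k-o(\ell_k^2)$ held along a chain of cylinders, comparison with the recursion $\ell_{k+1}=\ell_k-c\ell_k^2$ shows that $\ell_k\,k\to\infty$. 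This contradicts $\delta_n(f)\le C_0/(n+1)$. This mirrors, in reverse, the reasoning behind \cite[Lemma~8.4]{ManticaTotik}.

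The main obstacle is building that chain. The estimate $|f_i(J)|\ge|J|-o(|J|^2)$ only applies to intervals near $x_0$, but $f_i(J)$ lands near $f_i(x_0)$, which is generally a different point. What I would try first is a compactness argument over compositions. Among all words $\sigma$, consider points $y$ at which $f_\sigma'(y)=1$ and $f_\sigma''(y)=0$. Since $f_\sigma'\le1$, every factor of the chain rule must equal one at the corresponding intermediate point, so the property propagates along the orbit $y,\ f_{\sigma_{|\sigma|}}(y),\dots$.

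I would first try to use the infinitely many points of $E_i$ together with vanishing mesh to find a point of $E_i$ whose preimage under another map $f_j$ lies in $E_j$. The aim is a possibly periodic chain of such "flat" points returning near $x_0$. Along such a chain, the distortion bound (second derivatives bounded by a constant $K$, first derivatives equal to one at the chain points) would control $|f_\sigma(J)|$ from below for nested cylinders. If no such chain exists, then the cylinders containing $x_0$ would avoid flat points from some level onwards, and I would try to show that the length loss there is only quadratic. In that case I would instead compare with the cylinder $f_i(I)$ of a level-$n$ cylinder $I\ni x_0$: it has level $n+1$ and length at least $|I|-\varepsilon(|I|)\,|I|^2$.

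To finish, I would let $\ell_n$ be the maximal length of a level-$n$ cylinder meeting the accumulation set. The aim is an estimate $\ell_{n+1}\ge\ell_n-o(\ell_n^2)$, which by the comparison above contradicts $(M)$. This recursive selection is the step I expect to need the most care.
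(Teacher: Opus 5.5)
There is a genuine gap. The intermediate claim you set out to prove, $(F)$ with $p=1$ (every $E_i=\{f_i'=1\}$ finite), is false under the hypotheses, so no way of building the chain can succeed.

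Here is a counterexample. Take $N=3$, $f_0(x)=x/8$, $f_2(x)=7/8+x/8$, and let $f_1$ be the $C^\infty$ map of $[0,1]$ onto $[1/8,7/8]$ with $f_1(0)=1/8$ and $f_1'=1-\phi$. Here $\phi$ is smooth, $0\le\phi<1$, $\phi^{-1}(0)=I\coloneqq[1/100,1/50]$, and $\int_0^1\phi=1/4$.
\begin{itemize}
\item Since $f_1(x)\le 1/8+x$, the image $f_1(I)$ lies in $[1/8,1/8+1/50]$ and is disjoint from $I$.
\item By the chain rule, $(f_i\circ f_j)'(z)=1$ would force $j=1$, $z\in I$, $i=1$ and $f_1(z)\in I$, which is impossible.
\item Hence $\kappa\coloneqq\max_{i,j}\Lip(f_i\circ f_j)<1$ and $\delta_n(f)\le\kappa^{\lfloor n/2\rfloor}$. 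So \eqref{eq:basic-f} and $(M)$ hold, while $E_1=I$ is uncountable.
\end{itemize}
Every point of $I$ is a flat point with $f_1''=0$, yet this is harmless because its orbit leaves the flat set after one step. A single application of $f_i$ with loss $o(t^2)$ is perfectly compatible with $(M)$. The recursion $\ell_{n+1}\ge\ell_n-o(\ell_n^2)$ you aim for in your last step also fails here, since all cylinders shrink geometrically. Your fallback branch, with quadratic loss on the cylinders, likewise produces no contradiction.

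The missing idea is to let a long word supply the chain, which is why $(F)$ can only be expected for some large $p$. Suppose $|\tau|=n$ and $f_\tau'(z)=1$ at an interior point $z$. Then every chain-rule factor equals $1$ along the orbit $z_j=f_{\tau_{n-j+1}\cdots\tau_n}(z)$. Each $z_j$ with $j<n$ is an interior maximum of $f_{\tau_{n-j}}'$, so $f_{\tau_{n-j}}''(z_j)=0$ without any accumulation argument.

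Now track a second point at distance $t$ along the same orbit, and let $d_j$ be the distance between the two points after $j$ steps. Taylor's formula with the common modulus of continuity $\omega$ of the $f_i''$ gives $d_{j+1}\ge d_j-\tfrac12\omega(t)t^2$, hence $\delta_n(f)\ge t-\tfrac n2\omega(t)t^2$. Choosing $t=L/n$ with $L>C_0$ contradicts $(M)$ for all large $n$.

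Thus $\{f_\tau'=1\}\subset\{0,1\}$ whenever $|\tau|\ge n_0$, which is $(F)$ with $p=n_0$. Your deduction of $(R)$ from Proposition~\ref{prop:finite}(i) then goes through, with $A_{n_0}$ in place of $A_1$.
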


\begin{proof}
We first prove that there exists an integer $n_0\ge1$ such that, for every $n\ge n_0,\ \tau\in\A^n$,
\begin{equation}\label{eq:C2endpoint-equality}
 \{x\in[0,1] \colon f_\tau'(x)=1\}\subset\{0,1\}. 
\end{equation}

Suppose that $\tau=(\tau_1,\ldots,\tau_n)\in\A^n$ and $f_\tau'(z)=1$ for some $z\in(0,1)$.
Let $i_j=\tau_{n-j}$ for $0 \le j < n$, and define
\[ z_0 \coloneqq z,\qquad z_{j+1} \coloneqq f_{i_j}(z_j), \quad 0 \le j < n.\]
Thus $z_j=f_{\tau_{n-j+1}\cdots\tau_n}(z)$ for $1\le j\le n$, and $z_n = f_\tau(z)$.
Since each $f_i$ is strictly increasing, induction gives $z_j \in (0,1)$ for every $j$.
By the chain rule, $1=f_\tau'(z)=\prod_{j=0}^{n-1} f_{i_j}'(z_j)$.
Since $f_{i}' \in [0,1]$ for each $i$, $f_{i_j}'(z_j)=1$ for every $0 \le j < n$.
Since the differentiable function $f_{i_j}'$ attains its maximum at the interior point $z_j$, we have $f_{i_j}''(z_j)=0$ for $0 \le j < n$.

Fix $0<t\le1/2$.
At least one of $z-t$ and $z+t$ belongs to $[0,1]$, and choose such a point and denote it by $z'$.
Define the corresponding images of $z'$ and their distances from the images of $z$ by
\[
 \begin{aligned}
 y_0& \coloneqq z',\qquad y_{j+1} \coloneqq f_{i_j}(y_j), \quad 0\le j<n,\\
 d_j&\coloneqq|y_j-z_j|, \qquad 0\le j\le n.
 \end{aligned}
\]
In particular, $d_0=t$ and $d_n=|f_\tau(z')-f_\tau(z)|$.
Then $0<d_{j+1}\le d_j\le t, \ 0\le j<n$.

For $0\le j<n$, let $u_j \coloneqq y_j-z_j$, so that $|u_j|=d_j$.
Taylor's formula with integral remainder, applied at $z_j$ with increment $u_j$, gives
\[
 \begin{aligned}
 y_{j+1}-z_{j+1}
 &=f_{i_j}(z_j+u_j)-f_{i_j}(z_j)\\
 &=f_{i_j}'(z_j)u_j  +u_j^2\int_0^1(1-s)f_{i_j}''(z_j+s u_j)\,ds\\
 &=u_j+u_j^2\int_0^1(1-s)      \left(f_{i_j}''(z_j+s u_j)-f_{i_j}''(z_j)\right)\,ds.
 \end{aligned}
\]
The last equality uses $f_{i_j}'(z_j)=1$ and $f_{i_j}''(z_j)=0$.
The formula holds for either sign of $u_j$.

Define a common modulus of continuity of the second derivatives by
\[ \omega(t)\coloneqq\max_{i\in\A}  \sup_{\substack{u,v\in[0,1]\\|u-v|\le t}} |f_i''(u)-f_i''(v)|, \qquad t \in [0,1]. \]

The segment joining $z_j$ and $y_j$ is contained in $[0,1]$, and $|s u_j|\le d_j\le t$ for $0\le s\le1$.
Now the definition of $\omega$ gives that 
\[ \left|(y_{j+1}-z_{j+1})-u_j\right| \le u_j^2\int_0^1(1-s)\omega(t)\,ds =\frac12\omega(t)d_j^2.\]
By the reverse triangle inequality,
\[ d_{j+1}\ge d_j-\frac12\omega(t)d_j^2 \ge d_j-\frac12\omega(t)t^2.\]
Summing over $j=0,\ldots,n-1$ and using $d_0=t$, we obtain
\begin{equation}\label{eq:C2neutral-lower}
 \delta_n(f) \ge d_n = |f_\tau(z')-f_\tau(z)| \ge t-\frac n2\omega(t)t^2.
\end{equation}
The first inequality follows because both $z$ and $z'$ belong to $[0,1]$.

Choose $C_0>0$ as in $(M)$ and fix $L>C_0$. 
Since the family is finite and the second derivatives are continuous on $[0,1]$, we have $\lim_{t\downarrow0} \omega(t) = 0$.
Hence there exists $n_0\ge1$ such that, for every $n\ge n_0$, $\frac{L}{n} \le\frac{1}{2}$ and $\omega\left(\frac{L}{n}\right)<\frac{2(L-C_0)}{L^2}$.
For these $n$, taking $t=L/n$ in \eqref{eq:C2neutral-lower} gives
\[ n\delta_n(f)\ge L-\frac{L^2}{2} \omega\left(\frac{L}{n}\right)>C_0, \]
which contradicts $(M)$. 
The choice of $n_0$ is independent of $\tau$ and $z$, so \eqref{eq:C2endpoint-equality} follows.

Fix $p\ge n_0$.
By \eqref{eq:C2endpoint-equality}, every set $E_\tau$, $\tau\in\A^p$, is finite, so $(F)$ holds.
Proposition~\ref{prop:finite}(i) gives $K_{m_0}\subset A_p$ for some $m_0$, with $A_p$ finite, and hence $(R)$ holds.
\end{proof}

We record the resulting pointwise estimates outside the set of cylinder endpoints.

\begin{Cor}\label{cor:C2mesh}
Assume \eqref{eq:basic-f} and $(M)$, and suppose that every $f_i$ belongs to $C^2([0,1])$.
Then the following hold.\\
(i) For every $x\in[0,1]\setminus D_f$, we have $\lim_{n\to\infty}L_n(x)=0$ and \eqref{eq:littleo}.\\
(ii) If the target maps $\{g_i\}_i$ also satisfy \eqref{eq:basic-g}, then $\mu$ satisfies $(UW)$ and is UST regular.
Moreover, \eqref{eq:pointwise} holds at every $x\in[0,1]\setminus D_f$.
\end{Cor}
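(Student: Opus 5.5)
The plan is to reduce everything to Proposition~\ref{prop:C2mesh} and Proposition~\ref{prop:mesh}, showing that under $C^2$ and $(M)$ the countable exceptional set $Z_p$ can be replaced by $D_f$ itself.

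First, Proposition~\ref{prop:C2mesh} gives $n_0$ such that for every $n\ge n_0$ and $\tau\in\A^n$ the set $E_\tau$ is contained in $\{0,1\}$. Fix $p\ge n_0$. Then $E_\tau\subset\{0,1\}$ for every $\tau\in\A^p$. Hence $A_p=\bigcup_{\tau\in\A^p}f_\tau(E_\tau)\subset\{f_\tau(0),f_\tau(1):\tau\in\A^p\}\subset D_f$. Since $D_f$ is invariant under every $f_\sigma$ (because $f_\sigma(f_\tau(e))=f_{\sigma\tau}(e)$), the set $Z_p=D_f\cup\bigcup_{\sigma}f_\sigma(A_p)$ reduces to $D_f$.

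For part (i), Proposition~\ref{prop:finite}(i) gives \eqref{eq:Ldecay} at every $x\notin Z_p=D_f$. The cylinder estimate derived after Definition~\ref{def:mesh} then yields \eqref{eq:littleo} at every such point. Alternatively, one can cite Proposition~\ref{prop:mesh}(iii) directly, since $(F)$ holds by Proposition~\ref{prop:C2mesh}.

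For part (ii), Proposition~\ref{prop:mesh}(i) gives $(UW)$. Proposition~\ref{prop:mesh}(iii), applied with this $p$, gives UST regularity and \eqref{eq:pointwise} at every $x\notin Z_p=D_f$.

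The only point needing care is the identification $Z_p=D_f$, specifically the invariance $f_\sigma(D_f)\subset D_f$. This is immediate from the definition of $D_f$, which uses all finite words including the empty one. So I expect no real obstacle: the work was done in Proposition~\ref{prop:C2mesh}, which rules out interior points where compositions of length at least $n_0$ have derivative one.
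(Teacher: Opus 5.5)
Your proposal is correct and follows the paper's proof. You take $p\ge n_0$ from the proof of Proposition~\ref{prop:C2mesh}, identify $Z_p=D_f$ via invariance of $D_f$, and then use Proposition~\ref{prop:finite}(i) with the cylinder estimate for (i) and Proposition~\ref{prop:mesh}(i),(iii) for (ii); the paper cites Proposition~\ref{prop:finite}(iii) at that last step, from which Proposition~\ref{prop:mesh}(iii) is derived, and your ``alternative'' for (i) via Proposition~\ref{prop:mesh}(iii) should be dropped, since it formally requires targets satisfying \eqref{eq:basic-g} and its statement does not record $L_n(x)\to0$.
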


\begin{proof}
For (i), choose $p\ge n_0$ as in the proof of Proposition~\ref{prop:C2mesh}.
Equation~\eqref{eq:C2endpoint-equality} gives $A_p\subset D_f$, and hence $Z_p=D_f$, because $D_f$ is invariant under all finite compositions.
Therefore Proposition~\ref{prop:finite}(i) gives $\lim_{k \to \infty} L_k (x) = 0$ for every $x\notin D_f$.
The estimate for $(M)$ following Definition~\ref{def:mesh} then gives \eqref{eq:littleo} at every such point.

For (ii), assume that the target maps $\{g_i\}_i$ satisfy \eqref{eq:basic-g}.
Proposition~\ref{prop:mesh}(i) gives $(UW)$.
Proposition~\ref{prop:finite}(iii), with $Z_p=D_f$, shows that $\mu$ is UST regular and that \eqref{eq:pointwise} holds at every $x\notin D_f$.
\end{proof}

\subsection{A modulus for finite compositions}

A common quadratic contraction modulus for finite compositions gives the mesh bound $(M)$.

\begin{Def}\label{def:quadratic}
We say that $(Q)$ holds if there exist an integer $q\ge1$ and $a>0$ such that
\[   f_\tau(y)-f_\tau(x)  \le \frac{y-x}{1+a(y-x)},  \]
for every $0\le x<y\le1,\ \tau\in\A^q$.
\end{Def}

Iteration of the modulus in $(Q)$ gives an explicit mesh bound and hence $(M)$.

\begin{Cor}\label{cor:block}
Assume \eqref{eq:basic-f} and $(Q)$.
Then $(M)$ holds.
\end{Cor}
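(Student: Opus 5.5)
Set $\psi(t) \coloneqq t/(1+at)$ for $t\in[0,1]$, and iterate this modulus along blocks of length $q$. The map $\psi$ is increasing, $\psi(t)\le t$, and $\psi(t)<t$ for $t>0$. The key identity is
\[
 \frac{1}{\psi(t)} = \frac{1}{t}+a, \qquad t>0,
\]
so the $k$-fold iterate has the closed form $\psi^{k}(t)=t/(1+kat)$. In particular, $\psi^k(1)=1/(1+ka)$.

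First, take a word $\sigma\in\A^{n}$ with $n=kq+r$ and $0\le r<q$. Write $\sigma=\tau_1\tau_2\cdots\tau_k\rho$ with $\tau_j\in\A^q$ and $|\rho|=r$. Since every $f_i$ is $1$-Lipschitz and increasing, $f_\rho(1)-f_\rho(0)\le 1$. Applying $(Q)$ successively to the increments, and using the monotonicity of $\psi$ at each step, gives
\[
 f_\sigma(1)-f_\sigma(0)\le \psi^{k}\bigl(f_\rho(1)-f_\rho(0)\bigr)\le \psi^k(1)=\frac{1}{1+ka}.
\]
The application is inductive from the innermost block outward. The increment of $f_{\tau_j\cdots\tau_k\rho}$ on $[0,1]$ equals $f_{\tau_j}(y)-f_{\tau_j}(x)$, where $x<y$ are the endpoints of the image interval of the inner composition. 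Its length is therefore bounded by $\psi(y-x)$, and $y-x$ is already controlled by the induction hypothesis. Taking the maximum over $\sigma$ yields $\delta_n(f)\le 1/(1+\lfloor n/q\rfloor a)$.

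Second, convert this to the form $C_0/(n+1)$. We have $\lfloor n/q\rfloor\ge (n+1)/q-1$. For $n$ large, $1+a\lfloor n/q\rfloor\ge c(n+1)$ for some $c>0$ depending on $a$ and $q$; for instance, $c=\min\{a/(2q),\,1/(q+1)\}$ works after checking small $n$ separately. For the finitely many small $n$ we use $\delta_n(f)\le1$, which is absorbed by enlarging $C_0$. Concretely, take $C_0 \coloneqq \max\{q+1,\ 2q/a,\ \dots\}$, chosen so that $\delta_n(f)\le C_0/(n+1)$ holds for all $n\ge0$, including $n=0$ where $\delta_0=1\le C_0$.

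The only step needing care is the monotone iteration in the first step. One must verify that $(Q)$ applies to arbitrary subintervals, not just to $[0,1]$, which it does since it is stated for all $x<y$. One must also verify that $\psi$ is increasing, so that the bounds compose. The remaining arithmetic for the constant is routine.
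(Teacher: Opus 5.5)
Your proof is correct and is the paper's argument: split the word into $\lfloor n/q\rfloor$ blocks of length $q$ plus a nonexpanding remainder, and iterate the increasing modulus $t\mapsto t/(1+at)$ to obtain $\delta_n(f)\le 1/(1+a\lfloor n/q\rfloor)$, from which $(M)$ follows. For the final step, the constant $C_0=\max\{q+1,\,2q/a\}$ you wrote down already works for every $n\ge0$, using $\lfloor n/q\rfloor\ge (n+1)/(2q)$ for $n\ge 2q-1$, so no further terms are needed in that maximum.
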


\begin{proof}
Choose $q$ and $a$ as in Definition~\ref{def:quadratic}.
Split a word into $\lfloor n/q\rfloor$ consecutive blocks of length $q$ and a remainder.
The remainder is nonexpanding, while the common modulus $T(t)=t/(1+at)$ is increasing and satisfies $T^k(1)=1/(1+ak)$.
Thus, for every integer $n\ge0$,
\begin{equation*}\label{eq:block-mesh}
 \delta_n(f)\le\frac{1}{1+a\lfloor n/q\rfloor}.
\end{equation*}
In particular, $(M)$ holds.
\end{proof}

Under \eqref{eq:basic-g}, Proposition~\ref{prop:mesh} gives $(UW)$.
If $(R)$ also holds, the UST regularity and pointwise conclusions of that proposition apply, with exceptional set $Z_R$.
For $C^1$ source maps $\{f_i\}_i$, $(F)$ may replace $(R)$.
Proposition~\ref{prop:finite} allows the exceptional set to be taken as the countable set $Z_p$.

We give three sufficient conditions for the quadratic contraction estimate in $(Q)$.

\begin{Prop}\label{prop:check}
Let $h\in C^1([0,1])$ be strictly increasing and satisfy $0\le h'\le1$.
Each of the following conditions (i)--(iii) implies that
\[ h(y)-h(x)\le \frac{y-x}{1+a(y-x)}, \quad x < y, \]
for some $a>0$. \\
(i) $\Lip(h)<1$.\\
(ii) $E=\{h'=1\}$ is nonempty and finite, and for some $b>0$,
\[  1-h'(x)\ge b\,\dist(x,E), \qquad 0\le x\le1. \] 
(iii) There exists $b>0$ such that for every interval $J \subset [0,1]$,
\begin{equation*}
 \int_J(1-h'(u))\,du\ge b|J|^2.
\end{equation*}
\end{Prop}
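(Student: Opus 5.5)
The plan is to reduce everything to a pointwise lower bound on the deficit $D(x,y)\coloneqq(y-x)-(h(y)-h(x))=\int_x^y(1-h'(u))\,du$. The target inequality is equivalent to
\[ h(y)-h(x)\le \frac{t}{1+at},\qquad t=y-x, \]
which holds if $D(x,y)\ge t-\frac{t}{1+at}=\frac{at^2}{1+at}$. Since $t\le1$, it suffices to show $D(x,y)\ge a t^2$ for some $a>0$. Indeed $at^2\ge \frac{at^2}{1+at}$, so the quadratic lower bound on the deficit gives the claim with the same constant $a$. This makes (iii) the master condition: (iii) gives $D(x,y)\ge b t^2$ directly for $J=[x,y]$, and we take $a=b$.

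For (i), set $\kappa\coloneqq\Lip(h)<1$. Then $D(x,y)\ge(1-\kappa)t\ge(1-\kappa)t^2$ because $t\le1$, so $a=1-\kappa$ works. Equivalently, (i) implies (iii) with $b=1-\kappa$.

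For (ii), I will verify (iii). Fix an interval $J=[x,y]$ of length $t$. The integrand satisfies $1-h'(u)\ge b\,\dist(u,E)$, so it suffices to bound $\int_J\dist(u,E)\,du$ from below by a multiple of $t^2$. Let $k\coloneqq\#E\ge1$. The points of $E$ cut $J$ into at most $k+1$ subintervals, and one of them, say $J'$, has length at least $t/(k+1)$. On $J'$, $\dist(u,E)$ is at least the distance to the nearer endpoint of $J'$. The only exception is where an endpoint of $J'$ is an endpoint of $J$ rather than a point of $E$; there the distance to $E$ is even larger, so the tent bound still holds (taking the distance to the nearest point of $E$ outside $J'$ into account). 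Hence $\int_{J'}\dist(u,E)\,du\ge |J'|^2/4$. The cleanest way to say this is that the open interval $J'$ contains no point of $E$, so $\dist(u,E)\ge\min(u-\inf J',\sup J'-u)$ whenever the endpoints of $J'$ lie in $E$, and $\dist(u,E)$ is larger otherwise. This gives
\[ \int_J(1-h'(u))\,du\ge \frac{b}{4(k+1)^2}\,t^2, \]
which is (iii).

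The only mildly delicate point is the case analysis in (ii): we must make sure the tent lower bound holds even when $J'$ touches the boundary of $J$ rather than points of $E$. This is handled by observing that $\dist(u,E)\ge\dist(u,\partial J'\cap E)$, and when fewer endpoints lie in $E$ the bound only improves. Nonemptiness of $E$ ensures $\dist(\cdot,E)$ is finite, so the hypothesis in (ii) is meaningful. Continuity of $h'$ (from $h\in C^1$) is used only to write $D$ as an integral.
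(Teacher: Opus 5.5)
Your proof is correct and has the same architecture as the paper's: the quadratic deficit bound (iii) gives the conclusion via $t-bt^2\le t/(1+bt)$, (i) is immediate, and (ii) is reduced to (iii) through a lower bound $\int_J\dist(u,E)\,du\gtrsim|J|^2$. The one difference is that you prove this bound using the longest gap of $J\setminus E$ and a tent estimate, with constant $b/(4(k+1)^2)$, whereas the paper uses a layer-cake argument with $\Leb(\{u\in J:\dist(u,E)\le s\})\le 2ks$ and gets $b/(4k)$. Also, your closing remark $\dist(u,E)\ge\dist(u,\partial J'\cap E)$ has the inequality backwards, but it is not needed: the reason you already gave is complete, namely that the open gap $J'$ contains no point of $E$, so $\dist(u,E)\ge\min(u-\inf J',\sup J'-u)$.
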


\begin{proof}
For (i), if $\Lip(h)=\lambda<1$, use $\lambda t\le t/(1+(\lambda^{-1}-1)t)$ for $0\le t\le1$.
For (ii), let $M \coloneqq \#(E) \ge 1$.
Let $J$ be an interval in $[0,1]$ and let $t \coloneqq |J| \in [0,1]$.
Since
\[ \Leb\left(\{u\in J \colon \dist(u,E)\le s\}\right) \le 2Ms, \]
Tonelli's theorem gives
\[ \int_J\dist(u,E)\,du = \int_J \left(\int_0^{1} {\mathbf 1}_{\{\dist(u,E) > s\}} ds \right) du  \ge\int_0^{t/(2M)}(t-2Ms)\,ds = \frac{t^2}{4M}. \]
Thus (ii) implies (iii), with constant $b/(4M)$.
Under (iii),
\[ h(y)-h(x)\le t-bt^2\le\frac{t}{1+bt},\qquad t=y-x. \qedhere\]
\end{proof}

\section{The assumptions of Mantica--Totik}\label{sec:MT}

We show that the assumptions of Mantica--Totik imply the uniform mesh bound $(M)$ and the null survivor condition $(R)$.
Their invariant-measure theorem then follows from our criterion.

We recall assumptions A-0--A-4 from \cite[Section~8]{ManticaTotik}, using the notation of this paper.

\begin{Def}\label{def:MT}
A family of maps $f_i:[0,1]\to[0,1]$, $i\in\A$, satisfies the \emph{Mantica--Totik assumptions} if the following conditions hold.
\begin{enumerate}
\item[\textup{A-0.}] The maps are strictly increasing weak contractions, that is,
\[ 0<f_i(y)-f_i(x)<y-x,  \qquad 0\le x<y\le1,\ i\in\A.\]
The image intervals $[f_i(0),f_i(1)]$ cover $[0,1]$ and have pairwise disjoint interiors.
The maps are indexed from left to right, so that $f_0(0)=0$, $f_i(1)=f_{i+1}(0)$ for $0\le i<N-1$, and $f_{N-1}(1)=1$.
Let $\xi_i\in[f_i(0),f_i(1)]$ be the unique fixed point of $f_i$; in particular, $\xi_0=0$ and $\xi_{N-1}=1$.

\item[\textup{A-1.}] The map $f_0$ is concave on $[0,1]$, and there exists $\rho_0>0$ such that
\[ \frac{f_0(x)}{x}\le1-\rho_0 x,  \qquad 0<x\le1.\]

\item[\textup{A-2.}] The map $f_{N-1}$ is convex on $[0,1]$, and there exists $\rho_{N-1}>0$ such that
\[ \frac{1-f_{N-1}(x)}{1-x}\le1-\rho_{N-1}(1-x),  \qquad 0\le x<1.\]

\item[\textup{A-3.}] For every $1\le i\le N-2$, the fixed point satisfies $\xi_i\in(f_i(0),f_i(1))$.
The map $f_i$ is concave on $[\xi_i,1]$ and convex on $[0,\xi_i]$, and there exist $\rho_i,\rho_i^*>0$ such that
\[ \frac{f_i(x)-\xi_i}{x-\xi_i}\le1-\rho_i(x-\xi_i),  \qquad \xi_i<x\le1,\]
and
\[ \frac{\xi_i-f_i(x)}{\xi_i-x}\le1-\rho_i^*(\xi_i-x),  \qquad 0\le x<\xi_i.\]

\item[\textup{A-4.}] All positive constants introduced in A-1--A-3 are less than $1$.
Thus all the inequalities remain valid when these constants are replaced by the common value
\[ \rho\coloneqq\min\left( \{\rho_0,\rho_{N-1}\}\cup \{\rho_i,\rho_i^*:1\le i\le N-2\} \right) \in (0,1).\]
\end{enumerate}
\end{Def}

The existence and uniqueness of each fixed point $\xi_i$ follow from A-0.
On the compact interval $[0,1]$, the weak contraction condition in A-0 is equivalent to the weak contraction condition used in  \cite[Subsection~2.2]{Okamura2026}.

Recall that
\[ K_2 = \pi_f(\Sigma_2^0),\quad \Sigma_2^0 = \left\{\omega\in\A^{\mathbb N}\colon \Lip(f_{\omega_j}\circ f_{\omega_{j+1}})=1 \text{ for every }j\ge1\right\}.\]
Thus $K_2$ consists of the points admitting an address with no strict pair.
The following proposition gives a mesh bound and shows that $K_2$ is contained in the set of points fixed by at least one of the maps.

\begin{Prop}\label{prop:MT}
Let $\{f_i\}_{i\in\A}$ satisfy A-0--A-4 in Definition~\ref{def:MT}.
Then the following hold.\\
\textup{(i)} Condition $(M)$ holds.\\
\textup{(ii)} For the fixed points $\xi_i$ from Definition~\ref{def:MT}, we have
\begin{equation}\label{eq:MT-survivor}
 K_2\subset\{\xi_0,\ldots,\xi_{N-1}\}.
\end{equation}
In particular, condition $(R)$ holds with $m=2$.\\
\textup{(iii)}  For every target system $\{g_i\}_i$ satisfying \eqref{eq:basic-g}, the conjugacy measure $\mu$ satisfies $(UW)$ and is UST regular.\\
\textup{(iv)} Under the same assumption as in \textup{(iii)}, \eqref{eq:pointwise} holds outside the countable set
\[  D_f\cup\bigcup_{\sigma\in\A^*} f_\sigma\left(\{\xi_0,\ldots,\xi_{N-1}\}\right). \] 
\end{Prop}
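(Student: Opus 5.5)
Parts (iii) and (iv) are formal consequences of (i) and (ii): Proposition~\ref{prop:mesh}(i) turns $(M)$ into $(UW)$, Theorem~\ref{thm:main} (or Proposition~\ref{prop:mesh}(ii)) gives UST regularity once $(R)$ is known, and by Proposition~\ref{prop:pointwise} \eqref{eq:pointwise} holds off $Z_R=D_f\cup\bigcup_\sigma f_\sigma(K_2)$. By \eqref{eq:MT-survivor}, $Z_R$ is contained in the displayed countable set. Also, A-0 gives $\Lip(f_i)\le1$, and the vanishing mesh in \eqref{eq:basic-f} will follow from (i). So the plan is to prove (i) and (ii) directly.

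For (i) I would verify $(Q)$ with $q=2$ and invoke Corollary~\ref{cor:block}. Write $t=y-x$, and let $d_i(t)\coloneqq\sup_{y-x=t}(f_i(y)-f_i(x))$. The key single-map estimate I aim for is the following: for each $i$, if $[x,y]$ has an endpoint at the fixed point $\xi_i$, or lies on one side of it, then A-1--A-4 together with convexity/concavity give $f_i(y)-f_i(x)\le t-\rho t^2$ in the relevant regime. For example, for $f_0$ concave, the chord slope $(f_0(y)-f_0(x))/(y-x)$ is at most $f_0(y)/y$ when $x\ge0$ ... only when $x=0$, since by concavity chord slopes decrease in $x$. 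Because of this, a single map is not uniformly quadratic: far from $\xi_i$, $f_i$ may have derivative near $1$. Instead I would use compactness. The function $(x,y)\mapsto (t-(f_\tau(y)-f_\tau(x)))/t^2$ for $\tau\in\A^2$ is positive for $x<y$ by A-0. So it suffices to bound it below as $t\to0$ near each point $z$. Positivity away from the diagonal is automatic by continuity, so the only issue is the diagonal.

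Near a point $z$, a small interval is mapped by $f_{\tau_2}$ and then $f_{\tau_1}$. The obstacle is a pair where both maps have "slope $\approx1$" locally. I expect the argument to be as follows. By concavity/convexity on the pieces determined by $\xi_i$, the one-sided derivatives of $f_i$ are monotone on each piece. Derivative $1$ can occur only at $\xi_i$ itself: a slope-1 chord elsewhere would, by monotone slopes, force the chord from $\xi_i$ to have slope $\ge1$, contradicting A-0. The quadratic displacement bound then gives $1-f_i'(u)\gtrsim\rho|u-\xi_i|$ via the standard concavity comparison $f_i'(u)\le (f_i(u)-\xi_i)/(u-\xi_i)$. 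This is condition (ii) of Proposition~\ref{prop:check} in a Lipschitz/one-sided derivative form, and the integral argument there yields $f_i(y)-f_i(x)\le t-bt^2$ for each single $i$ with uniform $b>0$. This is already $(Q)$ with $q=1$, so the main obstacle is making the one-sided-derivative version of Proposition~\ref{prop:check} rigorous without $C^1$. I would work with the absolutely continuous concave/convex pieces, where the derivative exists a.e. and is monotone.

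For (ii), take $\omega\in\Sigma_2^0$, so every $f_{\omega_j}\circ f_{\omega_{j+1}}$ has Lipschitz constant $1$. Using the strict bound $f_i(y)-f_i(x)\le t-bt^2$, a Lipschitz constant $1$ for the composition forces arbitrarily short intervals whose images under $f_{\omega_{j+1}}$ and then $f_{\omega_j}$ approach slope $1$. By the derivative analysis, these intervals must shrink to $\xi_{\omega_{j+1}}$, and their images must shrink to $\xi_{\omega_j}$. Hence $f_{\omega_{j+1}}(\xi_{\omega_{j+1}})=\xi_{\omega_{j+1}}=\xi_{\omega_j}$ for all $j$. The fixed points are distinct for distinct indices, since $\xi_i$ is interior to its own image interval for middle $i$ and $\xi_0=0$, $\xi_{N-1}=1$. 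So $\omega$ is constant, $\omega=i^\infty$, and $\pi_f(\omega)=\xi_i$. This proves \eqref{eq:MT-survivor}, and $(R)$ with $m=2$ follows since the set is finite.
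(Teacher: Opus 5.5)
Your proposal is correct. Parts (iii) and (iv) are handled exactly as in the paper, but (i) and (ii) follow a different route.

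\textbf{Part (i).} The paper quotes the mesh estimate (8.10) from Lemma~8.4 of Mantica--Totik. You derive $(M)$ internally instead.
\begin{itemize}
\item At every differentiability point $u\ne\xi_i$, comparing with the chord through $\xi_i$ on the convex or concave side and using A-1--A-4 gives $f_i'(u)\le1-\rho|u-\xi_i|$. This is the inequality noted in Remark~\ref{rem:MT-balanced}.
\item Since $f_i$ is Lipschitz, hence absolutely continuous, integration gives $f_i(y)-f_i(x)\le t-\rho\int_x^y|u-\xi_i|\,du\le t-\rho t^2/4\le t/(1+\rho t/4)$.
\item Thus $(Q)$ holds already with $q=1$ and $a=\rho/4$, and Corollary~\ref{cor:block} gives $\delta_n(f)\le(1+\rho n/4)^{-1}$.
\end{itemize}
This is self-contained and slightly stronger than what the paper states. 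The lack of $C^1$ regularity, which you flag as the main obstacle, is harmless because only the a.e.\ derivative is used.

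\textbf{Part (ii).} The paper gives an explicit bound. For $j>i$, the image $f_j([0,1])\subset[f_i(1),1]$ stays at positive distance from $\xi_i$, and concavity gives $\Lip(f_i|_{[f_i(1),1]})\le1-\rho(f_i(1)-\xi_i)$. The case $j<i$ is symmetric. Hence $\Lip(f_i\circ f_j)\le\kappa<1$ whenever $i\ne j$, with no derivative analysis.

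Your contradiction argument also works:
\begin{itemize}
\item Near-extremal intervals for $f_a\circ f_b$ have length tending to $0$ by the single-map bound.
\item Since $1-\frac{f_b(y)-f_b(x)}{y-x}\ge\rho\,\bigl|\tfrac{x+y}{2}-\xi_b\bigr|$, these intervals collapse to $\xi_b$, while their images collapse to $\xi_a$. You should write this averaging step out.
\item Then $\xi_a=f_b(\xi_b)=\xi_b$, and $a=b$ because the fixed points are distinct.
\end{itemize}
It is less quantitative than the paper's bound, but it reaches the same conclusion $\Sigma_2^0\subset\{i^\infty\colon i\in\A\}$.

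\textbf{A correction to your exploratory remarks.} The chord bound for $f_0$ does not hold ``only when $x=0$''. Chord slopes of a concave function decrease in the left endpoint, so $(f_0(y)-f_0(x))/(y-x)\le f_0(y)/y\le1-\rho y$ for every $0\le x<y$. Likewise, no single map has derivative near $1$ far from its fixed point. This agrees with the conclusion you eventually reach, so you can drop the compactness detour and the initial plan with $q=2$.
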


\begin{proof}
For (i), the mesh estimate $(M)$ is given by \cite[Lemma~8.4, equation~(8.10)]{ManticaTotik}. 

By A-0, the source maps $\{f_i\}_i$ are strictly increasing weak contractions, and their image intervals form a compatible partition.
Since $(M)$ implies vanishing mesh, the source maps $\{f_i\}_i$ satisfy \eqref{eq:basic-f}, and the coding map $\pi_f$ and the sets $K_m$ are defined.

For (ii), let $\xi_i,\rho$ be as in Definition~\ref{def:MT}.
We have $\xi_0=0$, $\xi_{N-1}=1$, and $f_i(0)<\xi_i<f_i(1)$ for $0<i<N-1$.
By A-1--A-4, the common constant $\rho\in(0,1)$ satisfies
\begin{equation*}
 0<\frac{f_i(x)-\xi_i}{x-\xi_i}
 \le1-\rho|x-\xi_i|, \qquad x\ne \xi_i.
\end{equation*}
For an endpoint fixed point, we use only the inequality on the nonempty side.

We claim that there exists $\kappa<1$ such that $ \Lip(f_i\circ f_j)\le \kappa$ for every $i\ne j$.

If $j>i$, then $f_j([0,1])\subset[f_i(1),1]$ and $f_i(1)>\xi_i$.
For $f_i(1)\le u<v\le1$, concavity of $f_i$ on $[\xi_i,1]$ gives
\[ \frac{f_i(v)-f_i(u)}{v-u} \le \frac{f_i (v) - \xi_i}{v-\xi_i} \le \frac{f_i(f_i(1))-\xi_i}{f_i(1)-\xi_i} \le 1-\rho(f_i(1)-\xi_i) < 1.\]
Hence $ \Lip\left(f_i|_{[f_i(1), 1]}\right) < 1$. 
Since $\Lip(f_j)\le1$, it follows that $\Lip(f_i\circ f_j)$ is strictly less than $1$.

If $j<i$, then $f_j([0,1])\subset[0,f_i(0)]$ and $f_i(0) < \xi_i$.
For $0 \le u < v \le f_i (0)$, convexity of $f_i$ on $[0,\xi_i]$ similarly gives
\[ \Lip\left(f_i|_{[0,f_i(0)]}\right)  \le\frac{\xi_i-f_i(f_i(0))}{\xi_i-f_i(0)} \le1-\rho(\xi_i-f_i(0))<1.\]
There are finitely many pairs, so a common $\kappa<1$ exists.

Consequently, for every $\omega\in\Sigma_2^0$ and $r\ge1$, $\Lip(f_{\omega_r}\circ f_{\omega_{r+1}})=1$ implies that $\omega_r=\omega_{r+1}$. 
Thus every such $\omega$ is constant, and $ \Sigma_2^0\subset\{i^\infty \colon i\in\A\}$.
For each $i\in\A$, $\xi_i \in f_i^n([0,1])$, and the vanishing mesh implies
\[ \{\pi_f(i^\infty)\} =\bigcap_{n\ge1}f_i^n([0,1]) = \{\xi_i\}.\]
Therefore, 
\[ K_2=\pi_f(\Sigma_2^0)  \subset\{\pi_f(i^\infty):i\in\A\} =\{\xi_0,\ldots,\xi_{N-1}\},\]
which is \eqref{eq:MT-survivor}.
In particular, $\Leb(K_2)\le\sum_{i\in\A}\Leb(\{\xi_i\})=0$, so $(R)$ holds with $m=2$.

For (iii), assume that the target maps $\{g_i\}_i$ satisfy \eqref{eq:basic-g}.
By (i), Proposition~\ref{prop:mesh}(i) gives $(UW)$, and hence $(W)$.
Together with $(R)$ from (ii), Theorem~\ref{thm:main} gives UST regularity of $\mu$.

For (iv), the exceptional set from Lemma~\ref{lem:survivor}, with $m=2$, satisfies
\[ Z_R=D_f\cup\bigcup_{\sigma\in\A^*}f_\sigma(K_2) \subset D_f\cup\bigcup_{\sigma\in\A^*} f_\sigma\left(\{\xi_0,\ldots,\xi_{N-1}\}\right).\]
The set on the right is countable.
Under $(UW)$, Proposition~\ref{prop:pointwise} gives \eqref{eq:pointwise} outside this set.
\end{proof}

Choosing affine target maps $\{g_i\}_i$ recovers the balanced measures considered by Mantica--Totik.

\begin{Rem}\label{rem:MT-balanced}
For weights $p_i>0$ with $\sum_i p_i=1$, choose the affine target maps $\{g_i\}_i$ given by
\[ g_i(y)=\sum_{j<i}p_j+p_i y.\]
They satisfy \eqref{eq:basic-g}, and the conjugacy measure is the balanced measure of the source IFS $\{f_i\}_i$.
Indeed, $\mu(f_i(J))=p_i\mu(J)$ for every interval $J$, and hence for every Borel set by uniqueness of finite measures.
The source images partition $[0,1]$ up to their null endpoints.
Thus Proposition~\ref{prop:MT} recovers \cite[Theorem~8.1]{ManticaTotik} and also allows nonlinear target maps $\{g_i\}_i$.

The secant argument does not require $C^1$ regularity.
At every point $x\ne \xi_i$ where the derivative exists, it gives $ f_i'(x)\le1-\rho|x-\xi_i|<1$.
Thus, if the maps are $C^1$, then $\{f_i'=1\}\subset\{\xi_i\}$, so $(F)$ holds with $p=1$.
Without this regularity assumption, the set of differentiability points where $f_i'=1$ is still contained in $\{\xi_i\}$.
However, A-0--A-4 do not assume differentiability at every point or continuity of the derivative.
Condition $(R)$ does not require differentiability.
\end{Rem}

\section{Endpoint criteria and linear fractional systems}\label{sec:endpoints}

We give sufficient conditions in terms of the behavior of the source maps $\{f_i\}_i$ at the endpoints.
We first show that the uniform mesh bound implies quadratic endpoint drift without differentiability assumptions.
We then consider this drift condition for $C^1$ maps and give differential criteria for LF systems.

Under the next two conditions, the source maps $\{f_i\}_i$ can have derivative $1$ only at the endpoints.

\begin{Def}\label{def:endpoint}
For $C^1$ source maps $\{f_i\}_i$, let $E_i \coloneqq \{x\in[0,1]:f_i'(x)=1\}$.
(i) We say that condition $(H_\partial)$ holds if $E_i \subset\{0,1\}$ for every $i \in \A$. \\
(ii) We say that condition $(H)$ holds if $E_i \subset\{0,1\}$ and $E_i \ne \{0,1\}$ for every $i \in \A$. 
\end{Def}

For a compatible family of $C^1$, $1$-Lipschitz maps, condition $(H_\partial)$ implies vanishing mesh by Lemma~\ref{lem:finite}, since every $E_i$ is finite.
Thus the vanishing mesh requirement in \eqref{eq:basic-f} is automatic under these assumptions.

We impose quadratic lower bounds on the displacement under the two maps that fix endpoints.

\begin{Def}\label{def:drift}
The source system $\{f_i\}_i$ satisfies the drift condition $(D)$ if
\[  \liminf_{x\downarrow0}\frac{x-f_0(x)}{x^2}>0 
 \quad \textup{ and } \quad
 \liminf_{x\uparrow1}\frac{f_{N-1}(x)-x}{(1-x)^2}>0.\] 
\end{Def}

Quadratic endpoint drift is necessary for the uniform mesh bound, even without differentiability of the source maps $\{f_i\}_i$.

\begin{Lem}\label{lem:mesh-drift}
Under \eqref{eq:basic-f}, condition $(M)$ implies $(D)$.
Consequently, $(Q)$ also implies $(D)$.
\end{Lem}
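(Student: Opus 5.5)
The plan is to prove the two inequalities in $(D)$ directly, with an explicit constant. The only ingredients are the $1$-Lipschitz property of $f_0$ and the mesh bound $(M)$ applied to the cylinders $f_0^n([0,1])=[0,f_0^n(1)]$. The endpoint $1$ is handled by the symmetric argument for $f_{N-1}$. The statement about $(Q)$ then follows immediately from Corollary~\ref{cor:block}, which gives $(Q)\Rightarrow(M)$.

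First I record two facts about $f_0$. Compatibility gives $f_0(0)=0$, so $f_0^n(0)=0$ and $f_0^n([0,1])=[0,f_0^n(1)]$ is a level-$n$ cylinder. By $(M)$ this yields $f_0^n(y)\le f_0^n(1)\le C_0/(n+1)$ for all $y\in[0,1]$. Also $\Lip(f_0)\le1$ together with $f_0(0)=0$ gives $f_0(y)\le y$ for every $y$. Hence the orbit of any point is nonincreasing.

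Next fix $x\in(0,1]$ and put $d\coloneqq x-f_0(x)\ge0$. For $0\le y\le x$, the $1$-Lipschitz bound gives $f_0(x)-f_0(y)\le x-y$, that is, $f_0(y)\ge y-d$. So on $[0,x]$ the displacement of $f_0$ never exceeds the displacement at $x$. Since the orbit of $x$ stays in $[0,x]$, induction gives $f_0^n(x)\ge x-nd$ for every $n\ge0$.

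Now I choose the number of steps. If $d\ge x/2$, then $d\ge x^2/2$ and there is nothing to prove. Otherwise let $n\coloneqq\lfloor x/(2d)\rfloor\ge1$, or any $n$ if $d=0$. Then $f_0^n(x)\ge x/2$. On the other hand $f_0^n(x)\le C_0/(n+1)\le 2C_0d/x$, using $n+1\ge x/(2d)$. If $d=0$, letting $n\to\infty$ contradicts $x>0$. Otherwise we obtain $x-f_0(x)=d\ge x^2/(4C_0)$. Together with the first case this gives
\[ x-f_0(x)\ge\min\{1/2,\,1/(4C_0)\}\,x^2 \qquad\text{for all } x\in(0,1],\]
so the first $\liminf$ in $(D)$ is positive.

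For the endpoint $1$, apply the same argument to $f_{N-1}$, measuring distances from $1$. The maps $f_{N-1}^n$ fix $1$, and their images $[f_{N-1}^n(0),1]$ are level-$n$ cylinders of length at most $C_0/(n+1)$. The main point to get right is the direction of the Lipschitz inequality that bounds the displacement uniformly on $[0,x]$ (respectively $[x,1]$). Once that monotone-displacement comparison is in place, the rest is bookkeeping.
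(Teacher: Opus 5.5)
Your proof is correct and follows the paper's argument. The $1$-Lipschitz bound makes the displacement $x-f_0(x)$ nondecreasing, which gives $x-f_0^n(x)\le n\,(x-f_0(x))$; this is then played against $f_0^n(x)\le f_0^n(1)\le C_0/(n+1)$, the right endpoint is handled by symmetry, and $(Q)\Rightarrow(D)$ follows via Corollary~\ref{cor:block}. The only difference is that the paper fixes $n=\lceil 2C_0/x\rceil$, so the mesh bound forces $f_0^n(x)\le x/2$, whereas you fix $n=\lfloor x/(2d)\rfloor$ and use the mesh bound as the upper estimate. The paper's choice avoids your case split on $d$.
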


\begin{proof}
Assume $(M)$, and choose $C_0>0$ such that $\delta_n(f)\le C_0/(n+1)$ for every $n\ge0$.
Let $d(x)\coloneqq x-f_0(x)$.
Since $f_0(0)=0$ and $f_0$ is increasing and $1$-Lipschitz, we have $0\le f_0(x)\le x$.
Moreover, for $0\le x<y\le1$,
$d(y)-d(x)=(y-x)-(f_0(y)-f_0(x))\ge0$,
so $d$ is nondecreasing.
For every $n\ge1$ and $x\in[0,1]$, telescoping gives
\[ x-f_0^n(x) =\sum_{j=0}^{n-1}d(f_0^j(x)) \le n\,d(x),\]
because $f_0^j(x)\le x$.
On the other hand, $(M)$ gives
\[ 0\le f_0^n(x)\le f_0^n(1) =f_0^n(1)-f_0^n(0) \le\delta_n(f)\le\frac{C_0}{n+1}.\]
Fix $x\in(0,1]$ and choose $n=\lceil 2C_0/x\rceil$.
Then $f_0^n(x)\le x/2$, and hence $\frac{x}{2}\le x-f_0^n(x)\le n\,d(x)$.
Since $n\le2C_0/x+1$, it follows that
\[ \frac{x-f_0(x)}{x^2} \ge\frac{1}{2nx} \ge\frac{1}{4C_0+2x}.\]
Therefore,
\[ \liminf_{x\downarrow0}\frac{x-f_0(x)}{x^2} \ge\frac{1}{4C_0}>0.\]

For the right endpoint, define $h(t)\coloneqq1-f_{N-1}(1-t)$.
This map is increasing and $1$-Lipschitz, satisfies $h(0)=0$, and has the bound
\[h^n(1)=1-f_{N-1}^n(0) \le\delta_n(f)\le\frac{C_0}{n+1}.\]
The same argument applied to $h$ gives
\[ \liminf_{x\uparrow1}\frac{f_{N-1}(x)-x}{(1-x)^2} =\liminf_{t\downarrow0}\frac{t-h(t)}{t^2} \ge\frac{1}{4C_0}>0.\]
Thus $(D)$ holds.
Finally, Corollary~\ref{cor:block} gives $(Q)\Rightarrow(M)$, and hence $(Q)\Rightarrow(D)$.
\end{proof}

We next estimate the length of the image of $[0,1]$ under repeated application of a single source map.

\begin{Lem}\label{lem:run-bound}
Assume \eqref{eq:basic-f}, with $f_i\in C^1([0,1])$ for every $i\in\A$, and suppose that $(H_\partial)$ and $(D)$ hold.
There exists a constant $C>0$ such that for every $i\in\A,\ r\ge1$, 
\begin{equation}\label{eq:run-bound}
\left|f_i^r([0,1])\right| \le\frac{C}{r+1}.
\end{equation}
\end{Lem}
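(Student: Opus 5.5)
The plan is to treat the two endpoint maps $f_0,f_{N-1}$ separately from the interior maps $f_i$, $0<i<N-1$. The interior maps contract geometrically after one step. The endpoint maps are controlled through the drift condition $(D)$ and the standard reciprocal-sum argument for the recursion $x_{n+1}\le x_n-cx_n^2$. Since $\A$ is finite, the final constant $C$ is the maximum of the finitely many constants obtained.

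\emph{Interior maps.} Fix $0<i<N-1$.
\begin{itemize}
\item Compatibility and strict monotonicity give $f_i(0)=f_{i-1}(1)>f_{i-1}(0)\ge0$ and $f_i(1)=f_{i+1}(0)<f_{i+1}(1)\le1$. Hence $J_i\coloneqq f_i([0,1])=[f_i(0),f_i(1)]\subset(0,1)$ is compact.
\item By $(H_\partial)$ we have $f_i'<1$ on $(0,1)$. Since $f_i'$ is continuous, $\lambda_i\coloneqq\max_{J_i}f_i'<1$.
\item Because $f_i(J_i)\subset J_i$, for $r\ge1$ we get $|f_i^r([0,1])|=|f_i^{r-1}(J_i)|\le\lambda_i^{r-1}$.
\item Finally, $\sup_r(r+1)\lambda_i^{r-1}<\infty$, which gives \eqref{eq:run-bound} for this $i$.
\end{itemize}

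\emph{The map $f_0$.} Here $f_0^r([0,1])=[0,x_r]$ with $x_r\coloneqq f_0^r(1)$, so it suffices to bound $x_r$.
\begin{itemize}
\item First I show $f_0(x)<x$ for $x\in(0,1]$. By $(H_\partial)$ the set $\{f_0'=1\}$ is contained in $\{0,1\}$, so $f_0(x)=\int_0^x f_0'<x$.
\item By $(D)$ there are $c,\delta>0$ with $x-f_0(x)\ge cx^2$ on $(0,\delta]$.
\item The function $x\mapsto x-f_0(x)$ is continuous and positive on $[\delta,1]$, so it has a positive minimum $\varepsilon$ there.
\item Set $c'\coloneqq\min\{c,\varepsilon,1/2\}$. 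Then $x-f_0(x)\ge c'x^2$ for all $x\in[0,1]$.
\item The sequence $x_r$ is decreasing and lies in $(0,1]$, and $x_{r+1}\le x_r(1-c'x_r)$ with $c'x_r\le1/2$. Using $\frac{1}{1-u}\ge1+u$, this gives
\[
\frac{1}{x_{r+1}}\ge\frac{1}{x_r}\bigl(1+c'x_r\bigr)=\frac{1}{x_r}+c'.
\]
\item Iterating from $x_0=1$ yields $x_r\le 1/(1+c'r)$, which is at most $C/(r+1)$ with $C=1/c'$.
\end{itemize}

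\emph{The map $f_{N-1}$.} This case is symmetric. Conjugate by $t\mapsto1-t$, that is, set $h(t)=1-f_{N-1}(1-t)$, exactly as in the proof of Lemma~\ref{lem:mesh-drift}. The second half of $(D)$ and the inclusion $\{f_{N-1}'=1\}\subset\{0,1\}$ then give the same bound for $h^r(1)=|f_{N-1}^r([0,1])|$.

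\emph{Main obstacle.} The only delicate point is upgrading the asymptotic drift in $(D)$, which is a $\liminf$ at the endpoint, to a global quadratic lower bound on all of $[0,1]$. This is where $(H_\partial)$ and the $C^1$ hypothesis enter, by ruling out points $x>0$ with $f_0(x)=x$. After that, the reciprocal recursion is routine. If one prefers to avoid the integral argument, an alternative is the observation that a fixed point of $f_0$ in $(0,1]$ would contradict $\lim_{n\to\infty}\delta_n(f)=0$, since $f_0^n$ would then fix that point and the cylinders $f_0^n([0,1])$ could not shrink.
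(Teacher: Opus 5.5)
Your proof is correct and follows essentially the same route as the paper: geometric decay for the maps without a neutral fixed endpoint, and the reciprocal recursion $1/u_{r+1}-1/u_r\ge a$ derived from $(D)$ for the endpoint maps. The differences are minor. You split into interior versus endpoint maps rather than using the paper's set $\mathcal P$ of neutral endpoint labels (the paper bounds non-neutral endpoint maps via $\Lip(f_i^2)<1$). You also upgrade $(D)$ to a global bound $x-f_0(x)\ge c'x^2$ on $[0,1]$, whereas the paper applies $(D)$ only for large $r$, using $u_r\le\delta_r(f)\to0$.
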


\begin{proof}
Define
\[ \mathcal P \coloneqq \left\{i\in\A \colon   (i=0\text{ and }f_0'(0)=1)   \text{ or }(i=N-1\text{ and }f_{N-1}'(1)=1) \right\}.\]

For $i \notin \mathcal P$, we claim that $\Lip(f_i^2)<1$.
Otherwise, continuity of the derivative and the chain rule give a point $z\in[0,1]$ such that $f_i'(f_i(z))=f_i'(z)=1$.
By condition $(H_\partial)$,  $z, f_i(z)\in\{0,1\}$.
By compatibility and strict monotonicity of $\{f_i\}_i$, either $(i,z)=(0,0)$ or $(i,z)=(N-1,1)$. 
Hence $i\in\mathcal P$, a contradiction.
Since $|f_i^r([0,1])| \le  \Lip(f_i^2)^{\lfloor r/2\rfloor}, r \ge 1$, \eqref{eq:run-bound} holds for such $i$. 

For $i=0\in\mathcal P$, let $u_r \coloneqq f_0^r(1)$.
Since $f_0(0)=0$, we have
\[ 0 < u_r=f_0^r(1)-f_0^r(0)  \le \delta_r(f) \rightarrow 0, \qquad r\to\infty.\]
The convergence follows from the vanishing mesh assumption in \eqref{eq:basic-f}.
By condition $(D)$, there exists $a > 0$ such that $u_{r+1}\le u_r-au_r^2$ for all sufficiently large $r$. 
Hence
\[ \frac{1}{u_{r+1}} - \frac{1}{u_r}  =\frac{u_r-u_{r+1}}{u_ru_{r+1}} \ge a,\]
which proves \eqref{eq:run-bound}. 

For $i=N-1\in\mathcal P$, apply the same argument to $v_r\coloneqq 1-f_{N-1}^r(0)$, using the second condition in $(D)$.
Taking the maximum over the finitely many indices $i\in\A$ gives a common constant $C$ in \eqref{eq:run-bound}.
\end{proof}

We combine the preceding estimate with contraction at changes of symbols to obtain a uniform mesh bound and decay of prefix Lipschitz constants.

\begin{Prop}\label{prop:endpoint}
Assume \eqref{eq:basic-f}, with $f_i\in C^1([0,1])$ for every $i\in\A$, and suppose that $(H_\partial)$ and $(D)$ hold.
Then the following hold.\\
(i) Condition $(M)$ holds.\\
(ii) For every $x\in[0,1]\setminus D_f$, we have $\lim_{k\to\infty}L_k(x)=0$, and \eqref{eq:littleo} holds.\\
(iii) If the target maps $\{g_i\}_i$ also satisfy \eqref{eq:basic-g}, then $\mu$ satisfies $(UW)$ and is UST regular.
Moreover, \eqref{eq:pointwise} holds at every $x\notin D_f$.
\end{Prop}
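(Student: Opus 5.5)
Part (i) is the main work; (ii) and (iii) will then follow from results already in place.

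\textbf{Plan for (i).} Fix a word $\sigma\in\A^n$ and decompose it into maximal runs of a repeated symbol, $\sigma=i_1^{r_1}i_2^{r_2}\cdots i_\ell^{r_\ell}$ with $i_s\ne i_{s+1}$ and $\sum_s r_s=n$. There are two sources of contraction.

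\emph{Within a run}, Lemma~\ref{lem:run-bound} gives $|f_i^r([0,1])|\le C/(r+1)$.

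\emph{At a change of symbol}, I claim that $\kappa\coloneqq\max_{i\ne j}\Lip(f_i\circ f_j)<1$. If $\Lip(f_i\circ f_j)=1$, continuity of the derivatives and the chain rule give a point $z$ with $f_j'(z)=1$ and $f_i'(f_j(z))=1$. By $(H_\partial)$ we have $z\in\{0,1\}$ and $f_j(z)\in\{0,1\}$. Compatibility then forces $f_j(z)=z$ with $j\in\{0,N-1\}$. Since $f_j(z)$ is also the point where $f_i'=1$ and $f_i'$ vanishes only at endpoints, we would need $f_i(0)$ or $f_i(1)$ to lie at the same endpoint. For $i\ne j$ this contradicts compatibility and strict monotonicity unless a degenerate case occurs. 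I would check this small case analysis carefully: with $j=0$ and $z=0$ one needs $f_i'(0)=1$ with $i\ne0$, which is allowed. So the claim as stated may fail, and I would instead use $\Lip(f_i\circ f_j\circ f_k)$ or argue with images, as below.

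\textbf{Bounding the cylinder.} Write $f_\sigma=f_{i_1}^{r_1}\circ\cdots\circ f_{i_\ell}^{r_\ell}$. The length of the cylinder is at most
\[
|f_{i_1}^{r_1}([0,1])|\cdot\Lip\bigl(\text{the remaining composition}\bigr).
\]
The simplest way to get $O(1/n)$ is by cases. If some run has length $r_s\ge n/(2\ell)$, the bound is at least plausible. The cleaner route is: either $\ell\ge n^{1/2}$ or so, in which case the changes of symbol contribute $\kappa^{\lfloor(\ell-1)/2\rfloor}$, which is exponentially small; or some run is long, of length at least $n/\ell$. A long run, however, bounds only its own image, and the preceding maps are $1$-Lipschitz, so
\[
|f_\sigma([0,1])|\le|f_{\sigma'}(f_{i_s}^{r_s}([0,1]))|\le C/(r_s+1),
\]
where $\sigma'$ is the prefix before the run. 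This gives only $O(\ell/n)$, which is too weak.

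To get a true $O(1/n)$, I would run an inductive argument that mimics Mantica--Totik's Lemma~8.4. Prove $\delta_n(f)\le C'/(n+1)$ by induction on $n$, splitting off the \emph{first} run $f_{i}^{r}$ and using that on the image of a different symbol the map $f_i$ is a uniform strict contraction away from its neutral endpoint. Concretely, $f_i^r$ restricted to $f_j([0,1])$ with $j\ne i$ has Lipschitz constant at most $\kappa_0<1$ whenever the neutral point of $f_i$ is not in $f_j([0,1])$. When it is (that is, adjacent cylinders sharing the endpoint $0$ or $1$), use the drift $(D)$ to control the contribution.

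This induction is the main obstacle. I expect to need an estimate of the form
\[
\delta_n\le\max_r\Bigl(\min\bigl(C/(r+1),\ \kappa_0\,\delta_{n-r}\bigr)\Bigr)
\]
together with the observation that $\kappa_0\,C'/(n-r+1)\le C'/(n+1)$ whenever $r\le(1-\kappa_0)(n+1)$. The complementary case $r>(1-\kappa_0)(n+1)$ is handled by Lemma~\ref{lem:run-bound} with $C'\ge C/(1-\kappa_0)$.

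\textbf{Parts (ii) and (iii).} Given (i), these follow quickly. Condition $(H_\partial)$ makes each $E_i$ finite, so $(F)$ holds with $p=1$. Then $A_1=\bigcup_i f_i(E_i)\subset D_f$, since $f_i(\{0,1\})\subset D_f$, and therefore $Z_1=D_f$ by invariance of $D_f$. Proposition~\ref{prop:finite}(i) gives $L_k(x)\to0$ off $D_f$, and the estimate following Definition~\ref{def:mesh} gives \eqref{eq:littleo}. For (iii), Proposition~\ref{prop:mesh}(i) gives $(UW)$, and Proposition~\ref{prop:finite}(iii) with $Z_1=D_f$ gives UST regularity and \eqref{eq:pointwise} off $D_f$.
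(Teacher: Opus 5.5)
Your parts (ii) and (iii) coincide with the paper's argument. Your induction for (i) would also close if its key input held, namely a uniform bound $\Lip(f_i^r|_{f_j([0,1])})\le\kappa_0<1$ for all $i\ne j$ and all $r\ge1$. That bound fails for $r=1$: if $j=0$ and $0\in E_i$ for some $i\ne0$ (or $j=N-1$ and $1\in E_i$), then $\Lip(f_i|_{f_j([0,1])})=1$. This happens in Example~\ref{ex:triple}, where $f_1'(0)=1$ and $0\in f_0([0,1])=[0,1/2]$. For a word beginning $1\,0\cdots$ your recursion gives only $\min\{C/2,\delta_{n-1}\}$, which can never yield $C'/(n+1)$.

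Your proposed remedy, invoking $(D)$, does not help. Condition $(D)$ controls only $f_0$ near $0$ and $f_{N-1}$ near $1$, and it has already been used in Lemma~\ref{lem:run-bound}. It gives no contraction of $f_i$ on $f_0([0,1])$. The real obstruction is the length of the first run, not where the neutral point of $f_i$ lies. For $r\ge2$ your bound does hold even in your ``bad'' case. Indeed, $(f_i^r)'(y)=1$ forces $y,f_i(y)\in\{0,1\}$, so $y$ is a neutral fixed endpoint of $f_i$, and such a point cannot lie in $f_j([0,1])$ for $j\ne i$.

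The missing ingredient is the one you mention in passing but never use: $\rho\coloneqq\max\{\Lip(f_i\circ f_j\circ f_k)\colon j\ne k\}<1$. It holds because derivative $1$ forces $z,f_k(z),f_j(f_k(z))\in\{0,1\}$, which by compatibility gives $j=k$. With it your induction can be repaired, after adjusting constants:
\begin{itemize}
\item If the first run has length $r\ge2$ and is followed by $k\ne i$, write $f_i^r\circ f_k=f_i^{r-2}\circ(f_i\circ f_i\circ f_k)$.
\item If $r=1$, split off the first two runs $i\,j^{r_2}$ together with the next symbol $k\ne j$. Take the contraction from the triple straddling the end of the second run, namely $j\,j\,k$ or $i\,j\,k$, and take the run bound from $f_j^{r_2}$.
\end{itemize}
The paper proceeds without induction. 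It bounds the prefix before the $j$th run by $\rho^{\lfloor(j-1)/3\rfloor}$ using disjoint triples at run boundaries, and obtains $|f_\sigma([0,1])|\le C\rho^{\lfloor(j-1)/3\rfloor}/(r_j+1)$ for every run $j$. Summing $r_j|f_\sigma([0,1])|$ over $j$ then gives $n|f_\sigma([0,1])|\le 3C/(1-\rho)$. This summation is exactly what removes the $O(\ell/n)$ loss you identified.
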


\begin{proof}
For (i), we first estimate the Lipschitz constants of threefold compositions.
Suppose that $\Lip(f_i\circ f_j\circ f_k)=1$.
Continuity of the derivative gives a point $z\in[0,1]$ at which $(f_i\circ f_j\circ f_k)'(z)=1$.
Since all three derivative factors lie in $[0,1]$, the chain rule gives
\[ f_i'(f_j(f_k(z)))=f_j'(f_k(z))=f_k'(z)=1.\]
By $(H_\partial)$,  $z,\ f_k(z),\ f_j(f_k(z))\in\{0,1\}$. 
By compatibility and strict monotonicity, $f_k(z)\in\{0,1\}$ forces either $(k,z)=(0,0)$ or $(k,z)=(N-1,1)$.
In the first case, $f_j(f_k(z))=f_j(0)\in\{0,1\}$ forces $j=0$, and in the second case, $f_j(f_k(z))=f_j(1)\in\{0,1\}$ forces $j=N-1$.
Thus $j=k$.
Since there are only finitely many triples, there exists $\rho\in(0,1)$ such that
\[ \Lip(f_i\circ f_j\circ f_k) \le \rho \qquad i,j,k\in\A,\  j\ne k.\]

Write an arbitrary nonempty word as a product of maximal constant runs: 
\[ \sigma=a_1^{r_1}\cdots a_K^{r_K},\quad a_j\ne a_{j+1},\quad r_j\ge1,\quad n=|\sigma|=\sum_{j=1}^K r_j. \]

For $1\le j\le K$, let $P_j \coloneqq a_1^{r_1}\cdots a_{j-1}^{r_{j-1}}$.
Write $\sigma=\sigma_1\cdots\sigma_n$ and let
\[ b_0 \coloneqq 0,\qquad b_s \coloneqq \sum_{\ell=1}^s r_\ell, \qquad 1\le s\le K.\]
Thus the boundary between runs $s$ and $s+1$ lies between positions $b_s$ and $b_s+1$.

For each $j$, define
\[ \mathcal C_j \coloneqq \{s\in\mathbb Z \colon 2\le s\le j-2,\ s\equiv2\pmod3\}.\]
Then $\#\mathcal C_j=\left\lfloor\frac{j-1}{3}\right\rfloor$. 
For $s\in\mathcal C_j$, define
\[ \beta_s \coloneqq \sigma_{b_s-1}\sigma_{b_s}\sigma_{b_s+1}
 =
 \begin{cases}
 a_s a_s a_{s+1},&r_s\ge2,\\
 a_{s-1}a_s a_{s+1},&r_s=1.
 \end{cases}
\]
Since $b_s-1\ge1$ and $b_s+1\le b_{j-1}=|P_j|$, this occurrence of $\beta_s$ lies entirely within $P_j$.
Its last two symbols are distinct, so the preceding bound for threefold compositions gives $\Lip(f_{\beta_s})\le\rho$.

For consecutive indices $s,s+3\in\mathcal C_j$, we have
\[ b_{s+3}-b_s=r_{s+1}+r_{s+2}+r_{s+3} \ge 3.\]
Hence the sets of positions $\{b_s-1,b_s,b_s+1\}, \ s\in\mathcal C_j$, are pairwise disjoint.
Using $\Lip(f_i)\le1$ for the remaining symbols, we obtain
\[ \Lip(f_{P_j}) \le\prod_{s\in\mathcal C_j}\Lip(f_{\beta_s}) \le\rho^{\#\mathcal C_j} =\rho^{\lfloor(j-1)/3\rfloor},\]
where an empty product is interpreted as $1$.

Let $L\coloneqq f_\sigma(1)-f_\sigma(0)$.
For $1\le j\le K$, let $Q_j\coloneqq a_{j+1}^{r_{j+1}}\cdots a_K^{r_K}$, the suffix after the corresponding run.
Here $P_1 = Q_K = \emptyset$.
Then $\sigma$ has the decomposition $ \sigma= P_j a_j^{r_j} Q_j$. 
Let $C$ be the constant in Lemma~\ref{lem:run-bound}.
The suffix satisfies $f_{Q_j}([0,1])\subset[0,1]$, the prefix satisfies the preceding Lipschitz bound, and the $j$th run satisfies \eqref{eq:run-bound}.
Combining these estimates gives
\[ L \le \left|f_{P_j}\left(f_{a_j}^{r_j}([0,1])\right)\right| \le\rho^{\lfloor(j-1)/3\rfloor}\,  |f_{a_j}^{r_j}([0,1])| \le\frac{C\rho^{\lfloor(j-1)/3\rfloor}}{r_j+1}.\]

Consequently,
\[  nL=\sum_{j=1}^K r_jL \le C\sum_{j=1}^K\rho^{\lfloor(j-1)/3\rfloor} \le\frac{3C}{1-\rho}.\]
Enlarging the constant gives $(M)$ and proves (i).

For (ii), condition $(H_\partial)$ implies $(F)$ with $p=1$.
In the notation of Proposition~\ref{prop:finite}, we have $A_1=\bigcup_{i\in\A}f_i(E_i)\subset D_f$, since $f_i(E_i) \subset \{f_i (0), f_i (1)\}$. 
Since $D_f$ is invariant under all finite compositions, $Z_1=D_f$.
Proposition~\ref{prop:finite}(i) now gives $\lim_{k \to \infty} L_k(x) = 0$ for every $x\notin D_f$.
By (i) and the consequence of $(M)$ established after Definition~\ref{def:mesh}, \eqref{eq:littleo} holds at every such point.
This proves (ii).

For (iii), assume that the target maps $\{g_i\}_i$ satisfy \eqref{eq:basic-g}.
By (i), Proposition~\ref{prop:mesh}(i) gives $(UW)$.
Proposition~\ref{prop:finite}(iii), with $p=1$ and $Z_1=D_f$, then shows that $\mu$ is UST regular and that \eqref{eq:pointwise} holds at every $x\notin D_f$.
\end{proof}

The first condition in $(D)$ holds automatically if $f_0'(0)<1$, and the second holds automatically if $f_{N-1}'(1)<1$.
Thus only neutral fixed endpoints require an additional condition.
No analogous condition is needed at nonfixed endpoints of other maps.

\subsection{Differential conditions and LF systems}

We next impose lower bounds on $1-f_0'(x)$ near $0$ and on $1-f_{N-1}'(x)$ near $1$.

\begin{Def}\label{def:differential}
For $C^1$ source maps $\{f_i\}_i$, the differential endpoint conditions are
\begin{equation}\label{eq:differential-endpoints}
 \liminf_{x\downarrow0}\frac{1-f_0'(x)}{x}>0
 \quad\textup{and}\quad
 \liminf_{x\uparrow1}\frac{1-f_{N-1}'(x)}{1-x}>0.
\end{equation}
\end{Def}

Combined with $(H_\partial)$, these differential conditions imply the drift condition and a quadratic modulus for twofold compositions.

\begin{Prop}\label{prop:ND1}
Assume $f_i \in C^1([0,1])$ for each $i$, \eqref{eq:basic-f} and $(H_\partial)$.
Suppose that \eqref{eq:differential-endpoints} holds.
Then $(D)$ and $(Q)$ hold, with $q=2$ in $(Q)$.
\end{Prop}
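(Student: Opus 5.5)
We prove $(D)$ first, since it comes straight from the differential conditions. By \eqref{eq:differential-endpoints} there are $b>0$ and $\epsilon\in(0,1/2)$ with $1-f_0'(u)\ge bu$ for $0<u\le\epsilon$ and $1-f_{N-1}'(u)\ge b(1-u)$ for $1-\epsilon\le u<1$. Since $f_0(0)=0$, integration gives
\[ x-f_0(x)=\int_0^x(1-f_0'(u))\,du\ge \tfrac b2x^2, \qquad 0<x\le\epsilon, \]
and the symmetric computation from $f_{N-1}(1)=1$ handles the right endpoint. This yields $(D)$.

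For $(Q)$ with $q=2$, we aim to verify criterion (iii) of Proposition~\ref{prop:check} for each $h=f_i\circ f_j$. That criterion asks for $b'>0$ with
\[ \int_J(1-h'(u))\,du\ge b'|J|^2 \]
for every interval $J$, and Proposition~\ref{prop:check} then supplies the constant $a$. We will take the minimum of the constants over the finitely many pairs $(i,j)$. The tool is the chain rule: $h'(u)=f_i'(f_j(u))f_j'(u)$, and both factors lie in $[0,1]$, so $1-h'\ge\max\{1-f_j',\,1-f_i'\circ f_j\}$.

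The case analysis mirrors the proof of Lemma~\ref{lem:run-bound}.
\begin{enumerate}
\item If $\Lip(h)<1$, criterion (i) of Proposition~\ref{prop:check} applies directly.
\item Otherwise $h'(z)=1$ at some $z$. By $(H_\partial)$ and compatibility, this forces $i=j=0$ with $z=0$, or $i=j=N-1$ with $z=1$. Moreover $h'=1$ only at that endpoint, since $f_0(0)=0$ is the only point mapped into $\{0,1\}$ that is relevant here.
\end{enumerate}

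In case (b) with $i=j=0$, continuity and compactness give $\inf\{1-h'(u):u\ge\epsilon\}=c>0$. On $[0,\epsilon]$ we use $1-h'(u)\ge 1-f_0'(u)\ge bu$. Hence $1-h'(u)\ge b''u$ on all of $[0,1]$ with $b''=\min(b,c)$, which is criterion (ii) of Proposition~\ref{prop:check} with $E=\{0\}$. The case $i=j=N-1$ is symmetric with $E=\{1\}$. Thus every $h=f_\tau$ with $\tau\in\A^2$ satisfies the quadratic modulus, and $(Q)$ holds with $q=2$.

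The main obstacle is the bookkeeping in case (b). We must be sure that $h'=1$ only at the fixed neutral endpoint, which needs compatibility and strict monotonicity to rule out, for example, $f_j(z)=1$ with $f_i'(1)=1$ for $i\ne N-1$. We must also pass from the positive liminf near the endpoint to a global linear lower bound $1-h'\ge b''\dist(\cdot,E)$. Using composition length $2$ matters: it kills mixed pairs through Lemma~\ref{lem:run-bound}-type reasoning, so that only single neutral endpoints remain. Since Proposition~\ref{prop:check} is stated for $C^1$ maps $h$ with $0\le h'\le1$, we need not check separately that $h$ satisfies those hypotheses beyond the chain rule.
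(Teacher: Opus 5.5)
Your proof of $(D)$ and your overall plan for $(Q)$ match the paper's: show via the chain rule that $\{h'=1\}$ is empty or a single endpoint, then apply Proposition~\ref{prop:check}(i) or (ii). However, the classification in your case (b) is false.

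From $h'(z)=f_i'(f_j(z))f_j'(z)=1$ you get $z\in E_j$ and $f_j(z)\in E_i$. Compatibility then forces $(j,z)\in\{(0,0),(N-1,1)\}$. Nothing forces $i=j$, because $(H_\partial)$ allows $f_i'(0)=1$ or $f_i'(1)=1$ for every $i$. Example~\ref{ex:triple} satisfies all the hypotheses; the quotients in \eqref{eq:differential-endpoints} tend to $\pi^2/4$. It also has $\Lip(f_1\circ f_0)=\Lip(f_0\circ f_1)=1$, so these mixed pairs are covered by neither your case (a) nor your case (b).

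Likewise, the situation you say must be ruled out, namely $f_j(z)=1$ with $f_i'(1)=1$ for some $i\ne N-1$, genuinely occurs in that example. Contrary to your closing remark, composition length $2$ does not make mixed pairs strict under $(H_\partial)$. Proposition~\ref{prop:endpoint} needs triples whose last two symbols differ for that.

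The repair is immediate and gives exactly the paper's argument. Your estimate in case (b) uses only the inner factor, $1-h'\ge 1-f_j'$. So once $j=0$ it works for every outer map $f_i$: $1-h'(u)\ge 1-f_0'(u)\ge bu$ on $[0,\epsilon]$. Moreover $\{h'=1\}\subset\{0\}$, because $j$ alone determines $z$ (recall $0\ne N-1$). Hence $1-h'$ has a positive minimum on $[\epsilon,1]$, and Proposition~\ref{prop:check}(ii) applies with $E=\{0\}$. The case $j=N-1$ is symmetric, with $E=\{1\}$.

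So the actual role of $q=2$ is that the inner map of a nonstrict pair must fix a neutral endpoint. It is therefore $f_0$ or $f_{N-1}$, to which \eqref{eq:differential-endpoints} applies. With $q=1$, a map whose derivative equals $1$ at a nonfixed endpoint is not controlled, as Remark~\ref{rem:single} shows.
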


\begin{proof}
We first verify $(D)$.
By \eqref{eq:differential-endpoints}, there exist constants $c_0,c_1>0$ and $\varepsilon\in(0,1)$ such that
\[ 1-f_0'(t)\ge c_0t, \quad 0<t<\varepsilon, \qquad 1-f_{N-1}'(t)\ge c_1(1-t), \quad 1-\varepsilon<t<1.\]
Since $f_0(0)=0$, integration gives
\[ x-f_0(x) =\int_0^x \left(1-f_0'(t)\right)\,dt \ge c_0\int_0^x t\,dt =\frac{c_0}{2}x^2,  \qquad 0<x<\varepsilon.\]
Similarly, since $f_{N-1}(1)=1$, we obtain that 
\[ f_{N-1}(x)-x  =\int_x^1 \left(1-f_{N-1}'(t)\right)\,dt \ge c_1\int_x^1 (1-t)\,dt = \frac{c_1}{2}(1-x)^2, \qquad 1-\varepsilon<x<1.\]
Dividing by $x^2$ and $(1-x)^2$, respectively, and taking the lower limits at the corresponding endpoints, we obtain $(D)$.

We next prove $(Q)$ with $q=2$. 
For $h=f_i\circ f_j$, equality $h'(x)=1$ forces $f_j(x)\in E_i\subset\{0,1\}$ and $f_j'(x)=1$.
Compatibility forces $(j,x)=(0,0)$ or $(j,x)=(N-1,1)$.
Thus $\{h'=1\}$ is empty or consists of a single endpoint. 

If $\{h'=1\} = \emptyset$, then $\Lip(h)<1$.
If $\{h'=1\} = \{0\}$, then $j=0$, $f_0'(0)=1$, and
\[ 1-h'(x)=1-f_i'(f_0(x))f_0'(x)\ge1-f_0'(x).\] 
The first inequality in \eqref{eq:differential-endpoints} gives a positive linear lower bound near $0$.
Since $\{h'=1\}=\{0\}$, continuity gives $\min_{[\varepsilon,1]}(1-h')>0$.
After decreasing the constant, the linear lower bound holds on all of $[0,1]$, so Proposition~\ref{prop:check}(ii) applies. 
The case of $\{h'=1\} = \{1\}$ follows from the second inequality in \eqref{eq:differential-endpoints}.

For each pair $(i,j)\in\A^2$, Proposition~\ref{prop:check} gives a constant $a_{ij}>0$ such that
\[ (f_i\circ f_j)(y)-(f_i\circ f_j)(x)
 \le \frac{y-x}{1+a_{ij}(y-x)}, 
 \qquad 0\le x<y\le1.\]
Since there are only finitely many pairs, $a\coloneqq\min_{i,j\in\A}a_{ij} > 0$. 
Since replacing $a_{ij}$ by $a$ increases the right-hand side, $(Q)$ holds with $q=2$.
\end{proof}

For $C^2$ source maps $\{f_i\}_i$, nondegeneracy at neutral fixed endpoints can be expressed in terms of second derivatives.

\begin{Def}\label{def:ND}
We say that $C^2$ source maps $\{f_i\}_i$ satisfy condition $(ND)$ if both of the following hold:
\begin{equation*}\label{eq:ND}
 \begin{aligned}
 f_0'(0)<1&\quad\textup{or}\quad f_0''(0)<0,\\
 f_{N-1}'(1)<1&\quad\textup{or}\quad f_{N-1}''(1)>0.
 \end{aligned}
\end{equation*}
\end{Def}

These conditions give a $C^2$ regularity criterion that includes LF source systems.

\begin{Cor}\label{cor:C2LF}
Assume \eqref{eq:basic-f}--\eqref{eq:basic-g}, with $f_i\in C^2([0,1])$ for each $i$, and suppose that $(H_\partial)$ and $(ND)$ hold.
Then $\mu$ is UST regular.
\end{Cor}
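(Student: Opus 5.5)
The plan is to reduce Corollary~\ref{cor:C2LF} to Proposition~\ref{prop:endpoint}(iii). That proposition already yields $(UW)$ and UST regularity for $C^1$ source maps satisfying \eqref{eq:basic-f}, $(H_\partial)$ and $(D)$, with target maps satisfying \eqref{eq:basic-g}. Since $C^2$ maps are $C^1$ and $(H_\partial)$ is assumed, the only thing left to check is that $(ND)$ implies the drift condition $(D)$.

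I will treat the left endpoint first. If $f_0'(0)<1$, then $(D)$ holds at $0$ automatically, as noted after Proposition~\ref{prop:endpoint}. Explicitly,
\[
x-f_0(x)=\int_0^x\bigl(1-f_0'(t)\bigr)\,dt\ge cx
\]
for small $x$, with some $c>0$, so $(x-f_0(x))/x^2\to\infty$.

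Otherwise $f_0'(0)=1$ and $f_0''(0)<0$. Then Taylor's formula at $0$ gives
\[
1-f_0'(t)=-f_0''(0)\,t+o(t),
\]
so $\liminf_{t\downarrow0}(1-f_0'(t))/t=-f_0''(0)>0$. This is the first differential endpoint condition in \eqref{eq:differential-endpoints}. Integrating it exactly as in the proof of Proposition~\ref{prop:ND1} gives
\[
x-f_0(x)\ge \tfrac{c_0}{2}x^2
\]
near $0$, which is the first half of $(D)$. Equivalently, one can use the second-order expansion $x-f_0(x)=-\tfrac12 f_0''(0)x^2+o(x^2)$ directly.

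The right endpoint is handled symmetrically. If $f_{N-1}'(1)=1$ and $f_{N-1}''(1)>0$, expanding at $1$ gives
\[
1-f_{N-1}'(t)=f_{N-1}''(1)(1-t)+o(1-t),
\]
and integrating from $x$ to $1$ yields
\[
f_{N-1}(x)-x\ge \tfrac{c_1}{2}(1-x)^2 .
\]
If instead $f_{N-1}'(1)<1$, the bound is linear in $1-x$ and so trivially quadratic. Hence $(D)$ holds in all cases.

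With $(D)$ in hand, Proposition~\ref{prop:endpoint}(i) gives $(M)$, and part (iii) gives $(UW)$, UST regularity, and even \eqref{eq:pointwise} off $D_f$. There is no real obstacle here. The only care needed is in the mixed cases, where one endpoint is hyperbolic and the other neutral: the full differential condition \eqref{eq:differential-endpoints} may then fail at the hyperbolic endpoint (no, it holds there too, since $1-f_0'(t)\to1-f_0'(0)>0$). So one could alternatively verify \eqref{eq:differential-endpoints} in every case and invoke Proposition~\ref{prop:ND1} to get $(D)$ and $(Q)$. Either route ends at Proposition~\ref{prop:endpoint}.
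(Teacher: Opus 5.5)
Your proof is correct and follows the paper's argument. The paper also splits into the cases $f'<1$ and $f'=1$ at each endpoint (the only alternatives, since the maps are $1$-Lipschitz), uses Taylor expansions of $f_0'$ at $0$ and $f_{N-1}'$ at $1$ to obtain \eqref{eq:differential-endpoints}, gets $(D)$ from Proposition~\ref{prop:ND1}, and concludes with Proposition~\ref{prop:endpoint}(iii); the only difference is that you carry out the integration step from the proof of Proposition~\ref{prop:ND1} inline instead of citing it, which is immaterial.
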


\begin{proof}
We first verify the two inequalities in \eqref{eq:differential-endpoints}.
Since $f_0\in C^2([0,1])$, the Taylor expansion of $f_0'$ at $0$ gives
\[ f_0'(x)=f_0'(0)+f_0''(0)x+o(x) \qquad x\downarrow0.\]
Subtracting from $1$ and dividing by $x>0$, we obtain
\[ \frac{1-f_0'(x)}x =\frac{1-f_0'(0)}x-f_0''(0)+o(1), \qquad x\downarrow0. \]

If $f_0'(0)<1$, the quotient tends to $+\infty$ as $x\downarrow0$.

If $f_0'(0)=1$, the first term vanishes, and $(ND)$ gives $f_0''(0)<0$.
Therefore,
\[ \lim_{x\downarrow0}\frac{1-f_0'(x)}x = -f_0''(0) > 0.\]

At the right endpoint, the Taylor expansion of $f_{N-1}'$ at $1$ gives
\[  f_{N-1}'(x) =f_{N-1}'(1)-f_{N-1}''(1)(1-x)+o(1-x), \qquad x\uparrow1.\]
Thus, after subtracting from $1$ and dividing by $1-x>0$, we have
\[ \frac{1-f_{N-1}'(x)}{1-x} = \frac{1-f_{N-1}'(1)}{1-x}+f_{N-1}''(1)+o(1), \qquad x\uparrow1.\]

If $f_{N-1}'(1)<1$, this quotient tends to $+\infty$ as $x\uparrow1$.

If $f_{N-1}'(1)=1$, condition $(ND)$ gives $f_{N-1}''(1)>0$, and hence
\[ \lim_{x\uparrow1}\frac{1-f_{N-1}'(x)}{1-x} =f_{N-1}''(1)>0.\]
Since the endpoint derivatives are at most $1$, these cases prove \eqref{eq:differential-endpoints}.
Proposition~\ref{prop:ND1} gives $(D)$, so Proposition~\ref{prop:endpoint}(iii) yields UST regularity of $\mu$.
\end{proof}

The same source assumptions give the following contraction and mass estimates.

\begin{Cor}\label{cor:C2-estimates}
Assume \eqref{eq:basic-f}, with $f_i\in C^2([0,1])$ for every $i\in\A$, and suppose that $(H_\partial)$ and $(ND)$ hold.
Then the following hold.\\
(i) Conditions $(D)$, $(Q)$, and $(M)$ hold, with $q=2$ in $(Q)$.\\
(ii) For every $x\in[0,1]\setminus D_f$, we have $\lim_{k\to\infty}L_k(x)=0$, and \eqref{eq:littleo} holds.\\
(iii) If the target maps $\{g_i\}_i$ also satisfy \eqref{eq:basic-g}, then $\mu$ satisfies $(UW)$, and \eqref{eq:pointwise} holds at every $x\notin D_f$. 
\end{Cor}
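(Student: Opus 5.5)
The corollary assembles results already proved, so the plan is to chain them in the right order.

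For (i), I would first reuse the Taylor argument from the proof of Corollary~\ref{cor:C2LF}. That argument used only $f_0,f_{N-1}\in C^2([0,1])$ and $(ND)$, and it gives the differential endpoint conditions \eqref{eq:differential-endpoints}: in each case the relevant quotient either tends to $+\infty$ (when the endpoint derivative is less than $1$) or to $-f_0''(0)>0$, respectively $f_{N-1}''(1)>0$. The target maps play no role there. With \eqref{eq:basic-f} and $(H_\partial)$ also in force, Proposition~\ref{prop:ND1} then yields $(D)$ and $(Q)$ with $q=2$. Corollary~\ref{cor:block} converts $(Q)$ into $(M)$. Alternatively, Proposition~\ref{prop:endpoint}(i) gives $(M)$ directly from $(H_\partial)$ and $(D)$; I would cite one route and mention the other.

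For (ii), Proposition~\ref{prop:endpoint}(ii) already states exactly this conclusion under \eqref{eq:basic-f}, $C^1$ source maps, $(H_\partial)$ and $(D)$. Since $C^2$ maps are $C^1$ and $(D)$ was obtained in (i), it applies verbatim. Its content is that $(H_\partial)$ gives $(F)$ with $p=1$ and $A_1\subset D_f$, so $Z_1=D_f$, and Proposition~\ref{prop:finite}(i) then gives $L_k(x)\to0$ off $D_f$. Combining this with $(M)$ and the estimate following Definition~\ref{def:mesh} gives \eqref{eq:littleo}.

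For (iii), once \eqref{eq:basic-g} is added, Proposition~\ref{prop:endpoint}(iii) gives $(UW)$ for $\mu$ and \eqref{eq:pointwise} at every $x\notin D_f$. Equivalently, Proposition~\ref{prop:mesh}(i) gives $(UW)$ from $(M)$, and Proposition~\ref{prop:finite}(iii) with $Z_1=D_f$ gives the pointwise statement.

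There is no real obstacle. The one point to check is that the derivation of \eqref{eq:differential-endpoints} inside the proof of Corollary~\ref{cor:C2LF} does not use \eqref{eq:basic-g}, since (i) and (ii) are claimed without assuming anything about the target maps. That derivation involves only the source maps, so it transfers directly.
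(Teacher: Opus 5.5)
Your proposal is correct and matches the paper's proof. Both derive \eqref{eq:differential-endpoints} from the source-only Taylor argument in Corollary~\ref{cor:C2LF}, then apply Proposition~\ref{prop:ND1}, and then read off (ii) and (iii) from Proposition~\ref{prop:endpoint}. The only difference is that your primary route to $(M)$ goes through Corollary~\ref{cor:block} from $(Q)$, whereas the paper uses Proposition~\ref{prop:endpoint}(i) from $(H_\partial)$ and $(D)$; both are valid, and you already note the second as an alternative.
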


\begin{proof}
For (i), the Taylor expansion in the proof of Corollary~\ref{cor:C2LF} gives \eqref{eq:differential-endpoints} using only the source assumptions.
Therefore Proposition~\ref{prop:ND1} gives $(D)$ and $(Q)$ with $q=2$, and Proposition~\ref{prop:endpoint}(i) gives $(M)$.
Part (ii) follows from Proposition~\ref{prop:endpoint}(ii).
Under the additional target assumption, Proposition~\ref{prop:endpoint}(iii) gives (iii).
\end{proof}

We recall the definition of an LF system from \cite[Definition~2.8]{Okamura2026}, using the compatibility condition in Definition~\ref{def:compatible}.

\begin{Def}\label{def:LF}
A compatible family $\{h_i\}_{i\in\A}$ on $[0,1]$ is called a \emph{linear fractional system}, or an \emph{LF system}, if there are real coefficients $a_i,b_i,c_i,d_i$ such that, for every $i\in\A$,
\[h_i(x)=\frac{a_ix+b_i}{c_ix+d_i}, \quad 0\le x\le1, \qquad d_i>0,\quad c_i+d_i>0,\]
and
\[ 0<\Delta_i\coloneqq a_id_i-b_ic_i \le\min\{d_i^2,(c_i+d_i)^2\}.\]
\end{Def}

The regularity criterion applies to LF source systems and, when both systems are LF, also to the inverse conjugacy.

\begin{Cor}\label{cor:LF}
Suppose that the source maps $\{f_i\}_i$ form an LF system in the sense of Definition~\ref{def:LF}, and that the target maps $\{g_i\}_i$ satisfy \eqref{eq:basic-g}.
Then the following hold.\\
(i) The measure $\mu=\mu_\varphi$ is UST regular.\\
(ii) If the target maps $\{g_i\}_i$ also form an LF system, then both $\mu_\varphi$ and $\mu_{\varphi^{-1}}$ are UST regular.
\end{Cor}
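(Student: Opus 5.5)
The plan is to reduce both parts to Corollary~\ref{cor:C2LF}. For (i) it suffices to check that an LF source system satisfies \eqref{eq:basic-f}, consists of $C^2$ maps, and satisfies $(H_\partial)$ and $(ND)$. For (ii) we swap the roles of the two systems and apply (i) again.

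Let $h(x)=(ax+b)/(cx+d)$ be one map of an LF system. The denominator $cx+d$ is affine and positive at both endpoints, since $d>0$ and $c+d>0$. Hence it is positive on $[0,1]$, so $h\in C^\infty([0,1])$ and
\[ h'(x)=\frac{\Delta}{(cx+d)^2}>0 . \]
Because $cx+d$ is a positive affine function, $(cx+d)^2$ is monotone on $[0,1]$. Its minimum is therefore $\min\{d^2,(c+d)^2\}\ge\Delta$, which gives $0<h'\le1$ and $\Lip(h)\le1$.

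Next I would locate the set $\{h'=1\}$.
\begin{itemize}
\item If $c\neq0$, then $(cx+d)^2$ is strictly monotone. So it equals $\Delta$ at most at one point, and only where it attains its minimum, that is, at an endpoint.
\item If $c=0$, then $h$ is affine. Having $h'\equiv1$ would give $h(1)-h(0)=1$, which is impossible for a compatible family with $N\ge2$, because every image interval has length less than $1$.
\end{itemize}
Hence $(H_\partial)$ holds. Each $\{h'=1\}$ is finite, so Lemma~\ref{lem:finite} gives vanishing mesh, and \eqref{eq:basic-f} holds.

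For $(ND)$, suppose $f_0'(0)=\Delta_0/d_0^2=1$. I will show $c_0>0$.
\begin{itemize}
\item If $c_0<0$, then $(c_0+d_0)^2<d_0^2=\Delta_0$, which contradicts $\Delta_0\le(c_0+d_0)^2$.
\item If $c_0=0$, this is excluded by the affine case above.
\end{itemize}
Then $f_0''(0)=-2c_0\Delta_0/d_0^3<0$.

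The right endpoint is symmetric. If $f_{N-1}'(1)=1$, the same reasoning forces $c_{N-1}<0$. Hence $f_{N-1}''(1)=-2c_{N-1}\Delta_{N-1}/(c_{N-1}+d_{N-1})^3>0$. This verifies $(ND)$, and Corollary~\ref{cor:C2LF} gives (i).

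For (ii), the conjugacy can be rewritten as $\varphi^{-1}\circ g_i=f_i\circ\varphi^{-1}$. Thus $\varphi^{-1}$ is the unique conjugacy with source system $\{g_i\}_i$ and target system $\{f_i\}_i$. Applying (i) to this reversed pair requires that $\{f_i\}_i$ satisfy \eqref{eq:basic-g}.
\begin{itemize}
\item The lower bound $c_f\ge\min_i\Delta_i/\max\{d_i^2,(c_i+d_i)^2\}>0$ follows from the explicit formula for $f_i'$ and the mean value theorem.
\item Vanishing mesh was established above.
\end{itemize}
Since $\{g_i\}_i$ is LF, part (i) gives regularity of $\mu_{\varphi^{-1}}$. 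Regularity of $\mu_\varphi$ is (i) itself.

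The only delicate step is the sign bookkeeping for $c_0$ and $c_{N-1}$ in $(ND)$. This relies on the constraint $\Delta\le\min\{d^2,(c+d)^2\}$ together with the exclusion of the affine case.
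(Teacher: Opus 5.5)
Your proposal is correct and follows the paper's proof. You show that an LF source system is $C^\infty$ and satisfies $(H_\partial)$ (the paper in fact records the stronger $(H)$) and $(ND)$, using the same sign analysis of $c_0$ and $c_{N-1}$, then apply Corollary~\ref{cor:C2LF} and obtain (ii) by interchanging the two systems; your explicit checks of \eqref{eq:basic-f} for LF systems and of \eqref{eq:basic-g} for the swapped pair supply details the paper leaves implicit.
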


\begin{proof}
We verify that every LF source system satisfies the source assumptions of Corollary~\ref{cor:C2LF}.
With the coefficients and $\Delta_i$ from Definition~\ref{def:LF}, we have
\[ f_i'(x)=\frac{\Delta_i}{(c_ix+d_i)^2}, \qquad f_i''(x)=-\frac{2c_i\Delta_i}{(c_ix+d_i)^3}.\]
If $c_i\ne0$, the derivative is strictly monotone and $E_i$ is empty or consists of a single endpoint.
If $c_i = 0$, then the map is affine with slope strictly less than $1$, because its image has length less than $1$.
If $f_0'(0)=1$, then $c_0>0$: $c_0<0$ would give a derivative greater than $1$ inside the interval, and $c_0=0$ would make $f_0$ the identity.
Thus $f_0''(0)<0$.
Similarly, $f_{N-1}'(1)=1$ implies $c_{N-1}<0$ and $f_{N-1}''(1)>0$.
This proves that every LF source system satisfies $(H)$ and $(ND)$.
Its maps are $C^\infty$, so Corollary~\ref{cor:C2LF} proves (i).
If the target maps $\{g_i\}_i$ also form an LF system, both families satisfy the source and target assumptions.
Applying (i) with the two systems interchanged proves UST regularity of $\mu_{\varphi^{-1}}$, and hence (ii).
\end{proof}

Let $\mathfrak m\colon[0,1]\to[0,1]$ denote Minkowski's question mark function.
Corollary~\ref{cor:LF}(ii) applies to the measures with distribution functions $\mathfrak m$ and $\mathfrak m^{-1}$.
Indeed, take $N=2$ and
\[ f_0(x)=\frac{x}{1+x},\qquad f_1(x)=\frac{1}{2-x},\qquad g_0(x)=\frac{x}{2},\qquad g_1(x)=\frac{1+x}{2}.\]
The functional identities
\[ \mathfrak m\!\left(\frac{x}{1+x}\right) =\frac{\mathfrak m(x)}{2},\qquad \mathfrak m\!\left(\frac{1}{2-x}\right) =\frac{1+\mathfrak m(x)}{2}\]
identify the conjugacy $\varphi$ with $\mathfrak m$.
Both the source maps $\{f_i\}_i$ and the target maps $\{g_i\}_i$ form LF systems, and interchanging the two systems gives the inverse conjugacy $\mathfrak m^{-1}$.
Hence Corollary~\ref{cor:LF}(ii) yields UST regularity of both $\mu_{\mathfrak m}$ and $\mu_{\mathfrak m^{-1}}$, recovering \cite[Theorems~1.2 and~1.3]{ManticaTotik}.

For LF source systems, the known mesh estimate provides an alternative proof of the regularity conclusions.

\begin{Rem}\label{rem:LF-alternative}
Suppose that the source maps $\{f_i\}_i$ form an LF system and the target maps $\{g_i\}_i$ satisfy \eqref{eq:basic-g}.
The LF mesh bound in \cite[Lemma~2.12]{Okamura2026} gives $(M)$.
Since every LF source map is $C^\infty$, Corollary~\ref{cor:C2mesh} gives $(UW)$, UST regularity of $\mu$, and \eqref{eq:pointwise} and \eqref{eq:littleo} at every $x\notin D_f$.
If both systems are LF, applying this argument with the two systems interchanged also gives UST regularity of $\mu_{\varphi^{-1}}$.
Thus, when both systems are LF, this strengthens the conclusion of \cite[Proposition~2.13]{Okamura2026} to UST regularity for both measures.
\end{Rem}

\section{Implications and differentiability assumptions}\label{sec:implications}

We summarize the implications between the conditions and the differentiability assumptions required for each.
All implications below hold under the standing assumptions \eqref{eq:basic-f}--\eqref{eq:basic-g}.
In particular, the target maps $\{g_i\}_i$ have Lipschitz inverses on their image intervals and need not be differentiable.
The differentiability assumptions in the following subsections concern only the source maps $\{f_i\}_i$.
In the diagrams, solid arrows denote implications, and dotted lines without arrowheads join the two assumptions to the conjunction $(W)\land(R)$.

\begingroup
\tikzset{
 ust condition/.style={draw,rounded corners=1.5pt,minimum height=7mm,
  inner xsep=4pt,inner ysep=3pt,align=center,font=\small},
 ust compound/.style={ust condition,fill=black!4},
 ust implication/.style={-{Latex[length=1.8mm,width=1.2mm]},line width=.55pt},
 ust equivalent/.style={{Latex[length=1.8mm,width=1.2mm]}-{Latex[length=1.8mm,width=1.2mm]},line width=.55pt},
 ust input/.style={densely dotted,line width=.7pt}
}

\subsection{\texorpdfstring{$C^0$}{C0} source maps}\label{subsec:implications-C0}

We impose no differentiability assumption on the source maps $\{f_i\}_i$; they are $1$-Lipschitz by \eqref{eq:basic-f}.
The conditions $(W)$, $(UW)$, $(R)$, $(M)$, $(Q)$, $(D)$, and $\lambda^*$ are defined without assuming differentiability of the source maps $\{f_i\}_i$.
Corollary~\ref{cor:block}, Lemma~\ref{lem:mesh-drift}, Proposition~\ref{prop:mesh}, and Theorem~\ref{thm:main} give the implications shown in the following diagram.
Proposition~\ref{prop:MT} gives both $(M)$ and $(R)$ under the Mantica--Totik assumptions A-0--A-4.

\begin{center}
\begin{tikzpicture}[x=1cm,y=1cm]
 \node[ust condition] (q) at (0,0) {$(Q)$};
 \node[ust condition] (m) at (2.3,0) {$(M)$};
 \node[ust condition] (d) at (2.3,-1.6) {$(D)$};
 \node[ust condition] (uw) at (4.6,0) {$(UW)$};
 \node[ust condition] (w) at (6.9,0) {$(W)$};
 \node[ust condition] (r) at (6.9,2) {$(R)$};
 \node[ust compound] (mt) at (2.3,2) {Mantica--Totik\\A-0--A-4};
 \node[ust compound] (both) at (9.2,1) {$(W)\land(R)$};
 \node[ust condition] (lambda) at (9.2,-.7) {$\lambda^*$};
 \node[ust condition] (ust) at (9.2,-2.2) {UST regularity};
 \draw[ust implication] (q) -- (m);
 \draw[ust implication] (m) -- (d);
 \draw[ust implication] (m) -- (uw);
 \draw[ust implication] (uw) -- (w);
 \draw[ust implication] (mt) -- (m);
 \draw[ust implication] (mt) -- (r);
 \draw[ust input] (r.east) -- (both.north west);
 \draw[ust input] (w.east) -- (both.south west);
 \draw[ust implication] (both) -- (lambda);
 \draw[ust implication] (lambda) -- (ust);
\end{tikzpicture}
\end{center}

The mesh condition $(M)$ implies $(W)$ through $(UW)$, whereas the main criterion also requires $(R)$.
Lemma~\ref{lem:mesh-drift} also gives $(M)\Rightarrow(D)$ without differentiability assumptions.
We discuss the reverse implication under $(H_\partial)$ in the next subsection.

\subsection{\texorpdfstring{$C^1$}{C1} source maps}\label{subsec:implications-C1}

For $C^1$ source maps $\{f_i\}_i$, Proposition~\ref{prop:finite} gives the upper chain below.
The lower chain and the regularity criterion are as in the preceding subsection.

\begin{center}
\begin{tikzpicture}[x=1cm,y=1cm]
 \node[ust condition] (h) at (0,2.2) {$(H)$};
 \node[ust condition] (hp) at (2.3,2.2) {$(H_\partial)$};
 \node[ust condition] (f) at (4.6,2.2) {$(F)$};
 \node[ust condition] (r) at (6.9,2.2) {$(R)$};
 \node[ust condition] (q) at (0,0) {$(Q)$};
 \node[ust condition] (m) at (2.3,0) {$(M)$};
 \node[ust condition] (d) at (2.3,-1.6) {$(D)$};
 \node[ust condition] (uw) at (4.6,0) {$(UW)$};
 \node[ust condition] (w) at (6.9,0) {$(W)$};
 \node[ust compound] (both) at (9.2,1.1) {$(W)\land(R)$};
 \node[ust condition] (lambda) at (9.2,-.6) {$\lambda^*$};
 \node[ust condition] (ust) at (9.2,-2.1) {UST regularity};
 \draw[ust implication] (h) -- (hp);
 \draw[ust implication] (hp) -- (f);
 \draw[ust implication] (f) -- (r);
 \draw[ust implication] (q) -- (m);
 \draw[ust implication] (m) -- (d);
 \draw[ust implication] (m) -- (uw);
 \draw[ust implication] (uw) -- (w);
 \draw[ust input] (r.east) -- (both.north west);
 \draw[ust input] (w.east) -- (both.south west);
 \draw[ust implication] (both) -- (lambda);
 \draw[ust implication] (lambda) -- (ust);
\end{tikzpicture}
\end{center}

Proposition~\ref{prop:endpoint} gives $(M)$ under $(H_\partial)$ and $(D)$.
Together with Lemma~\ref{lem:mesh-drift}, this implies that $(D)$ and $(M)$ are equivalent under $(H_\partial)$.
Here $(D)$ is a condition on function values, whereas $(H_\partial)$ requires $C^1$ regularity.
The differential endpoint conditions \eqref{eq:differential-endpoints} in Definition~\ref{def:differential} imply $(D)$ and, together with $(H_\partial)$, imply $(Q)$ with $q=2$ by Proposition~\ref{prop:ND1}.
In the following diagram, \emph{derivative bounds} denotes \eqref{eq:differential-endpoints}.

\begin{center}
\begin{tikzpicture}[x=1cm,y=1cm]
 \node[ust compound] (strong) at (0,0) {$(H_\partial)$ and\\derivative bounds};
 \node[ust condition] (bounds) at (3,1.6) {Derivative bounds};
 \node[ust condition] (d) at (7.3,1.6) {$(D)$};
 \node[ust compound] (hd) at (7.3,0) {$(H_\partial)\land(D)$};
 \node[ust condition] (q) at (3,-1.6) {$(Q)$ with $q=2$};
 \node[ust condition] (m) at (10,0) {$(M)$};
 \draw[ust implication] (strong) -- (bounds);
 \draw[ust implication] (bounds) -- (d);
 \draw[ust implication] (strong) -- (hd);
 \draw[ust implication] (hd) -- (d);
 \draw[ust implication] (hd) -- (m);
 \draw[ust implication] (m.north) -- (d.east);
 \draw[ust implication] (strong) -- (q);
 \draw[ust implication] (q) -- (m.south west);
\end{tikzpicture}
\end{center}

\subsection{\texorpdfstring{$C^2$}{C2} source maps}\label{subsec:implications-C2}

For $C^2$ source maps $\{f_i\}_i$, Proposition~\ref{prop:C2mesh} gives the additional implication $(M)\Rightarrow(F)$.
Thus $(M)$ implies both $(W)$ and $(R)$ and is sufficient for UST regularity in this setting.
Proposition~\ref{prop:ND1}, together with the Taylor expansion in the proof of Corollary~\ref{cor:C2LF}, gives $(Q)$ with $q=2$ under $(H_\partial)$ and $(ND)$.
An LF source system in the sense of Definition~\ref{def:LF} has $C^\infty$ maps and satisfies $(H)$ and $(ND)$.
We obtain the following diagram.

\begin{center}
\begin{tikzpicture}[x=1cm,y=1cm]
 \node[ust compound] (lf) at (0,4.8) {LF source system};
 \node[ust compound] (hnd) at (0,3.2) {$(H)\land(ND)$};
 \node[ust compound] (hpnd) at (0,1.6) {$(H_\partial)\land(ND)$};
 \node[ust condition] (q) at (0,0) {$(Q)$ with $q=2$};
 \node[ust condition] (m) at (2.4,0) {$(M)$};
 \node[ust condition] (f) at (4.8,1.2) {$(F)$};
 \node[ust condition] (r) at (7.2,1.2) {$(R)$};
 \node[ust condition] (uw) at (4.8,-1.2) {$(UW)$};
 \node[ust condition] (w) at (7.2,-1.2) {$(W)$};
 \node[ust compound] (both) at (9.6,0) {$(W)\land(R)$};
 \node[ust condition] (lambda) at (9.6,-1.7) {$\lambda^*$};
 \node[ust condition] (ust) at (9.6,-3.2) {UST regularity};
 \draw[ust implication] (lf) -- (hnd);
 \draw[ust implication] (hnd) -- (hpnd);
 \draw[ust implication] (hpnd) -- (q);
 \draw[ust implication] (q) -- (m);
 \draw[ust implication] (m) -- (f);
 \draw[ust implication] (f) -- (r);
 \draw[ust implication] (m) -- (uw);
 \draw[ust implication] (uw) -- (w);
 \draw[ust input] (r.east) -- (both.north west);
 \draw[ust input] (w.east) -- (both.south west);
 \draw[ust implication] (both) -- (lambda);
 \draw[ust implication] (lambda) -- (ust);
\end{tikzpicture}
\end{center}

Remark~\ref{rem:LF-alternative} gives an alternative argument for LF systems: the known mesh bound gives $(M)$ directly, and the two implications from $(M)$ in the diagram then apply.

For $C^2$ source maps, the endpoint conditions satisfy the following equivalences without assuming $(H_\partial)$.
The implication from $(M)$ to $(D)$ follows from Lemma~\ref{lem:mesh-drift}.
The term \emph{derivative bounds} again denotes \eqref{eq:differential-endpoints}.

\begin{center}
\begin{tikzpicture}[x=1cm,y=1cm]
 \node[ust condition] (m) at (0,0) {$(M)$};
 \node[ust condition] (d) at (2.3,0) {$(D)$};
 \node[ust condition] (nd) at (4.6,0) {$(ND)$};
 \node[ust condition] (bounds) at (8.1,0) {Derivative bounds};
 \draw[ust implication] (m) -- (d);
 \draw[ust equivalent] (d) -- (nd);
 \draw[ust equivalent] (nd) -- (bounds);
\end{tikzpicture}
\end{center}

Indeed, at a neutral left endpoint, Taylor's formula gives $(x-f_0(x))/x^2\to-f_0''(0)/2$, and the corresponding ratio at a neutral right endpoint tends to $f_{N-1}''(1)/2$.
At a fixed endpoint with derivative less than $1$, the corresponding ratio tends to $+\infty$.
The equivalence of the differential endpoint conditions with $(ND)$ follows from the Taylor expansions in the proof of Corollary~\ref{cor:C2LF}.
\endgroup

\subsection{Scope of the criterion}\label{subsec:scope}

Theorem~\ref{thm:main} imposes no differentiability assumption on either the source maps $\{f_i\}_i$ or the target maps $\{g_i\}_i$.
No convexity or concavity assumption is imposed on the source maps $\{f_i\}_i$.
Proposition~\ref{prop:MT} shows that assumptions A-0--A-4 of \cite{ManticaTotik} imply both $(M)$ and $(R)$, so \cite[Theorem~8.1]{ManticaTotik} is recovered for affine target maps $\{g_i\}_i$.
Corollary~\ref{cor:LF} strengthens the uniform lower mass estimate of \cite[Proposition~2.13]{Okamura2026} to UST regularity.

The mesh condition $(M)$ gives the uniform mass estimate $(UW)$ by Proposition~\ref{prop:mesh}.
The main criterion requires only $(W)$ and $(R)$, which do not in general imply $(UW)$, as shown in Example~\ref{ex:W-not-UW}.
Thus $(M)$ gives a uniform estimate stronger than the mass condition in Theorem~\ref{thm:main}.

\section{Examples and limitations}\label{sec:examples}

We give examples illustrating the roles of longer strict blocks and interior neutral fixed points and distinguishing the geometric and mass conditions above.
The final example shows that $(W)$ is not necessary for UST regularity among general measures.

Among the explicit source systems in this section, only Example~\ref{ex:interior} satisfies all the Mantica--Totik assumptions A-0--A-4.
Each of the other source systems violates at least one of these assumptions, although its conjugacy measure is UST regular for every target system $\{g_i\}_i$ satisfying \eqref{eq:basic-g}.

\subsection{Strict blocks and interior neutral points}

Finite compositions play two distinct roles in the proofs of mesh bounds and regularity.

\begin{Rem}\label{rem:blocks}
The compositions in $(Q)$ give the uniform $O(1/n)$ mesh bound.
The blocks in $\mathcal S_m$ used in Lemma~\ref{lem:survivor} give decay of the Lipschitz constants of prefixes under $(R)$.
The two lengths need not coincide.
Under $(H)$, strict pairs suffice for the latter argument: a nonstrict pair has its inner label in $\mathcal P$, and two distinct labels in $\mathcal P$ form a strict pair.
Here $\mathcal P$ is the set of labels of maps with a neutral fixed endpoint, as defined in the proof of Lemma~\ref{lem:run-bound}.
Under the weaker $(H_\partial)$, strict triples suffice instead.
The condition $\mathcal S_m\ne\emptyset$ alone is insufficient for this argument; the proof requires $\mathcal J_m(\omega)$ to be infinite along the relevant address.
Condition $(R)$ gives this recurrence outside a Lebesgue null set; Proposition~\ref{prop:finite} gives the stronger conclusion that the exceptional set is countable under $(F)$.
Neither argument requires a probabilistic assumption on addresses.
\end{Rem}

The next example has strict triples but no strict pairs.

\begin{Exa}\label{ex:triple}
Let $N=2$ and
\[ f_j(x)=x-\frac{1-\cos(\pi x)}4+\frac j2, \qquad j=0,1.\]
These maps are compatible and $C^\infty$, and
\[ f_j'(x)=1-\frac\pi4\sin(\pi x)>0, \qquad E_j =\{0,1\}, \qquad j = 0,1.\]
Thus $(H)$ fails, while $(H_\partial)$ holds. 
Moreover, $f_0''(0)=-\pi^2/4$ and $f_1''(1)=\pi^2/4$, so condition $(ND)$ holds.
Hence Corollary~\ref{cor:C2-estimates} applies. 
Evaluating the derivative at $x=j$ gives, for every $i,j$, $\Lip(f_i\circ f_j)=1$.

On the other hand, $\Lip(f_i\circ f_j\circ f_k)=1$ if and only if $j=k$.
If $j=k$, then $(f_i \circ f_j \circ f_j)^{\prime}(x) = f_i' (f_j(f_j(x))) f_j'(f_j(x)) f_j'(x)$ and hence $(f_i \circ f_j \circ f_j)^{\prime}(j) = f_i' (f_j(f_j(j))) f_j'(f_j(j)) f_j'(j) = f_i'(j) = 1$, $j = 0,1$. 
Conversely, if $\Lip(f_i\circ f_j\circ f_k)=1$, then there exists $x$ such that $ f_i' (f_j(f_k(x))) = f_j'(f_k(x)) = f_k'(x) = 1$.
Since $E_0 = E_1 =\{0,1\}$, $j=k=x$. 
Thus a triple is strict whenever its last two symbols differ, whereas no pair is a strict contraction.

In this example, $(Q)$ holds with $q=1$.
For $t=y-x$,
\begin{align*}
 f_j(y)-f_j(x)
 &=t-\frac12\sin\!\left(\frac{\pi(x+y)}2\right) \sin\!\left(\frac{\pi t}2\right)\\
 &\le t-\frac12\sin^2\!\left(\frac{\pi t}2\right) \le t-\frac{t^2}2\le\frac{t}{1+t/2}.
\end{align*}
Here $(x+y)/2\in[t/2,1-t/2]$ and $\sin(\pi t/2)\ge t$ for $0\le t\le1$.
\end{Exa}

The following example has an interior neutral fixed point.

\begin{Exa}\label{ex:interior}
Let $N=3$ and
\[ f_0(x)=\frac x8,\qquad f_1(x)=x-\frac12\left(x-\frac12\right) \left|x-\frac12\right|,\qquad f_2(x)=\frac78+\frac x8.\]
The images of these maps are $[0,1/8]$, $[1/8,7/8]$, and $[7/8,1]$. 
We have
\[ f_1'(x)=1-\left|x-\frac12\right|, \qquad f_1(1/2)=1/2,\qquad f_1'(1/2)=1.\]
Thus $E_0=E_2=\emptyset$ and $E_1=\{1/2\}$.
For $t=y-x$,
\[ f_1(y)-f_1(x) =t-\int_x^y\left|u-\frac12\right|\,du \le t-\frac{t^2}4\le\frac{t}{1+t/4}.\]
The affine maps $f_0$ and $f_2$ satisfy the same estimate. 
Thus $(F)$ holds with $p=1$, and $(Q)$ holds with $q=1$ and $a=1/4$.
Consequently, Proposition~\ref{prop:finite} and Corollary~\ref{cor:block} apply, although $(H_\partial)$ fails and $f_1$ is not $C^2$. 
The pointwise conclusion holds outside $D_f\cup\{f_\sigma(1/2):\sigma\in\A^*\}$.
For affine target maps $\{g_i\}_i$ given by $g_i(x)=p_ix+\sum_{j<i}p_j$ with $p_i>0$ and $\sum_ip_i=1$, the conclusion applies to the corresponding balanced IFS measures.
\end{Exa}

Figure~\ref{fig:source-examples} compares the source maps $\{f_i\}_i$ in Examples~\ref{ex:triple} and~\ref{ex:interior}.

\begin{figure}[htbp]
\centering
\includegraphics[width=\linewidth]{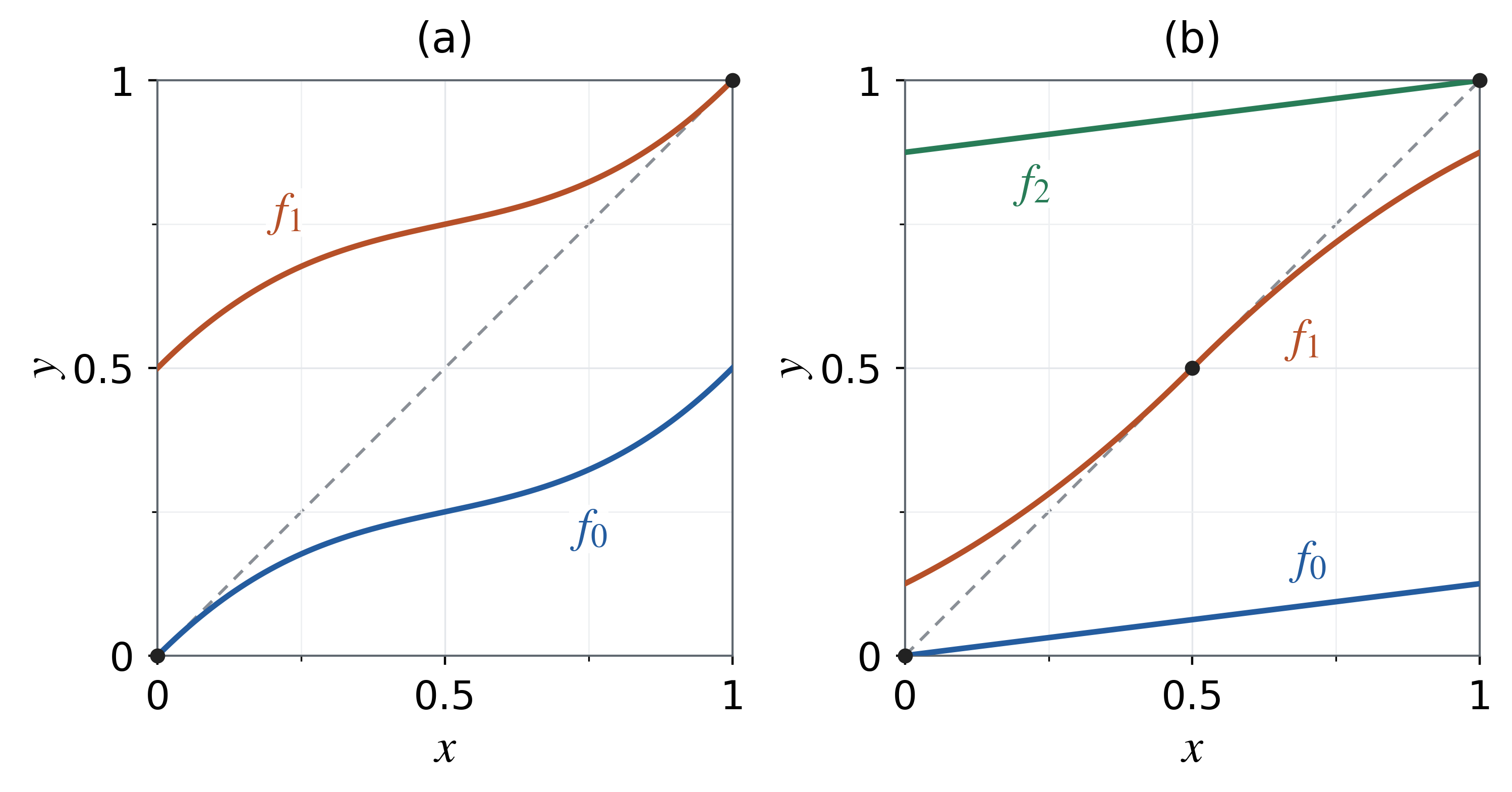}
\caption{Source maps $\{f_i\}_i$ in Example~\ref{ex:triple} (a) and Example~\ref{ex:interior} (b).
The dashed line is $y=x$, and the marked points are the fixed points of $\{f_i\}_i$.}
\label{fig:source-examples}
\end{figure}

\subsection{Null survivors without finite equality sets}

Condition $(R)$ can hold even when $(F)$ fails.

\begin{Exa}\label{ex:R-not-F}
Condition~$(R)$ is strictly weaker than~$(F)$, even among compatible $C^1$ systems satisfying~$(M)$.
More precisely, there exists such a system for which $K_1=\{0\}$, whereas, for every $p\ge1$, the set $\{x:(f_0^p)'(x)=1\}$ contains a nondegenerate interval.
Its conjugacy measure is UST regular for every compatible target system $\{g_i\}_i$ satisfying~\eqref{eq:basic-g}.
\end{Exa}

\begin{proof}
Let $h(s) \coloneqq \left(\max\{-\sin(2\pi s),0\}\right)^2$ and define
\[ d(x) \coloneqq \int_0^x t\,h(\log_2t)\,dt\quad x>0, \qquad d(0) \coloneqq 0, \qquad \theta \coloneqq d(1),\]
\[ f_0(x) \coloneqq x-d(x),\qquad f_1(x) \coloneqq 1-\theta+\theta x.\]
The integrand extends continuously to $0$ with value $0$.
Consequently, $d\in C^1([0,1])$, $d'(0)=0$, and
\[ f_0'(x)=1-xh(\log_2x), \quad x>0, \qquad f_0'(0)=1.\]
Since $0\le h\le1$, we have $f_0'(x)\ge1-x>0$ for $0<x<1$.
Moreover, $f_0'(1)=1$, because $h(0)=0$.
Thus $0<f_0'\le1$ on $[0,1]$.
Also, $0<\theta\le1/2<1$, so $f_1'=\theta$.
In particular, the maps are strictly increasing and $1$-Lipschitz.
They are compatible because
\[ f_0(0)=0,\qquad f_0(1)=1-\theta=f_1(0),\qquad f_1(1)=1.\]

Figure~\ref{fig:null-survivors} shows the source maps $\{f_i\}_i$ and the function $1-f_0'$.

\begin{figure}[htbp]
\centering
\includegraphics[width=\linewidth]{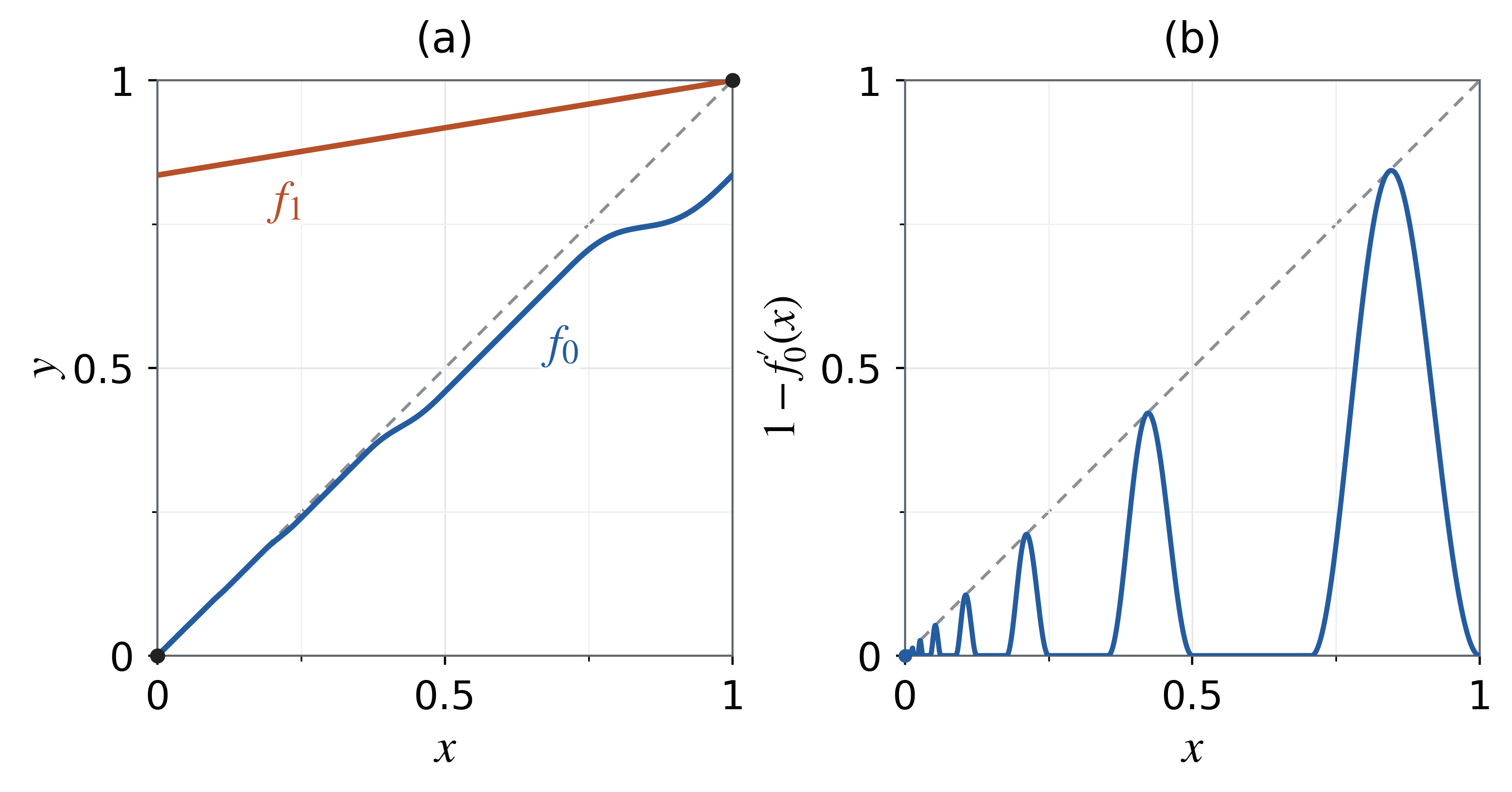}
\caption{The source maps $\{f_i\}_i$ in Example~\ref{ex:R-not-F} (a) and the function $1-f_0'(x)=xh(\log_2x)$, extended continuously by $0$ at $x=0$ (b).
The dashed line is $y=x$ in both panels.
The zero intervals in (b), on which $f_0'=1$, accumulate at $0$.}
\label{fig:null-survivors}
\end{figure}

Since the function $h$ is $1$-periodic, a change of variables gives
\[ d\left(\frac{x}{2}\right)=\frac14  d(x), \qquad 0\le x\le1.\]
Moreover, $d(x)>0$ for every $x>0$.
The positive continuous function $d(x)/x^2$ has a positive minimum and a finite maximum on $[1/2,1]$.
The scaling identity $\frac{d(x/2)}{(x/2)^2} = \frac{d(x)}{x^2}$ extends these bounds to $(0,1]$.
Hence there are $a,b>0$ such that $ ax^2\le d(x)\le bx^2, \ 0\le x\le1$.

Let $s_r \coloneqq f_0^r(1)$.
Since $0<s_{r+1} = f_0 (s_r) = s_r-d(s_r)\le s_r$, we have
\[ \frac1{s_{r+1}}-\frac1{s_r} =\frac{d(s_r)}{s_rs_{r+1}}\ge a.\]
It follows that
\begin{equation}\label{eq:R-not-F-run}
 |f_0^r([0,1])| = s_r\le\frac1{1+ar} \le\frac{C}{r+1}, \qquad r\ge0,
\end{equation}
where $C \coloneqq \max\{1,a^{-1}\}$.

Write any word $\sigma$ of length $n$ in the form
\[ \sigma=0^{r_0}1\,0^{r_1}1\cdots1\,0^{r_k},  \qquad r_j\ge0, \qquad n=k+\sum_{j=0}^k r_j.\]
Here $k$ is the number of occurrences of $1$, and we allow empty runs of $0$, that is, $r_j = 0$.
Thus consecutive occurrences of $1$ are allowed.
For each $j$, the prefix preceding the run $0^{r_j}$ has Lipschitz constant at most $\theta^j$, while the suffix following that run maps $[0,1]$ into itself.
Thus
\[ |f_\sigma([0,1])| \le\frac{C\theta^j}{r_j+1}, \quad 0\le j\le k.\]
Summing these inequalities after multiplication by $r_j+1$ yields
\[ (n+1) |f_\sigma([0,1])|  =\sum_{j=0}^k(r_j+1) |f_\sigma([0,1])|  \le C\sum_{j=0}^k\theta^j  \le\frac{C}{1-\theta}.\]
This proves~$(M)$.

Since $\Lip(f_0)=1$ and $\Lip(f_1)=\theta<1$, the only infinite word all of whose length-one blocks have Lipschitz constant $1$ is $0^\infty$.
Equation~\eqref{eq:R-not-F-run} gives
\[ K_1=\bigcap_{r\ge1}f_0^r([0,1])=\{0\}.\]
Hence~$(R)$ holds.
Proposition~\ref{prop:mesh} gives $(UW)$, and Theorem~\ref{thm:main} gives UST regularity for every target system $\{g_i\}_i$ satisfying~\eqref{eq:basic-g}.

It remains to show that~$(F)$ fails for every block length.
For every integer $n\ge1$, the function $h(\log_2x)$ vanishes on $J_n=[\alpha_n,\beta_n] \coloneqq [2^{-n},2^{-n+1/4}]$.
Thus $d$ is constant on $J_n$, with value $d_n=d(2^{-n})=\theta 4^{-n}$, and $f_0(x)=x-d_n$ there.
Fix $p\ge1$.
For all sufficiently large $n$, the interval $I_{n,p}=[\alpha_n+(p-1)d_n,\beta_n]$ is nondegenerate, since $\beta_n-\alpha_n$ is a positive constant times $2^{-n}$, whereas $d_n=\theta 4^{-n}$.
If $x\in I_{n,p}$, then $f_0^j(x)=x-jd_n\in J_n, \ 0\le j\le p-1$. 
Each derivative factor in the chain rule for $f_0^p$ is therefore $1$, and $I_{n,p}\subset\{x:(f_0^p)'(x)=1\}$.
Thus $(F)$ fails for every $p\ge1$.

Finally, the source system is not $C^2$ at $0$: for $x>0$,
\[ \frac{f_0'(x)-f_0'(0)}{x} = -h(\log_2x),\]
which has no limit as $x\downarrow0$.
\end{proof}

\subsection{Endpoint drift and quadratic contraction bounds}

Endpoint nondegeneracy alone does not imply a quadratic contraction estimate for each map.

\begin{Rem}\label{rem:single}
The assumptions $C^2+(H)+(ND)$ do not imply $(Q)$ with $q=1$.
For example, take
\[ f_0(x) \coloneqq \frac{x+3x^2-x^3}{4},\qquad f_1(x) \coloneqq \frac34+\frac x4.\]
Here $E_0=\{1\}$ and $E_1=\emptyset$. 
Since $f_0'(0)=f_1'(1)=1/4<1$, condition $(ND)$ holds. 
However, $f_0(1)-f_0(1-t)=t-\frac{t^3}{4}$ cannot be bounded above by $t/(1+at)$ for all sufficiently small $t$ for any $a>0$.
On the other hand, all four compositions $f_i\circ f_j$, $i,j\in\{0,1\}$, are strict contractions.
Since there are only finitely many such compositions, $(Q)$ holds with $q=2$.
Thus the value $q=2$ in Proposition~\ref{prop:ND1} cannot in general be replaced by $q=1$.
\end{Rem}

The next example satisfies $(H_\partial)$ and $(D)$, so Proposition~\ref{prop:endpoint} applies.
Nevertheless, $(Q)$ fails for every fixed composition length.

\begin{Exa}\label{ex:oscillatory}
For $x>0$, let
\[ F(x) \coloneqq x^2\left(\frac12 + \frac{2\sin(\log x)-\cos(\log x)}5\right)+\frac{x^3}3, \qquad F(0) \coloneqq 0,\]
and define the $C^1$ maps
\[  f_0(x) \coloneqq x-\frac14F(x),\qquad f_1(x) \coloneqq \frac{101}{120}+\frac{19}{120}x.\]
The maps are compatible since $f_0(1) = 101/120$.
We also have
\[ f_0'(x)=1-\frac{x(1+\sin(\log x))+x^2}{4}, \quad x>0,\qquad f_0'(0)=1.\]
In particular, $1/4\le f_0'(x)<1$ for $x \in (0,1]$, $E_0=\{0\}$, and $E_1=\emptyset$. 
Thus $(H_{\partial})$ holds. 
Since
\[ x-f_0(x)\ge\frac14\left(\frac12-\frac1{\sqrt5}\right)x^2,\]
condition $(D)$ holds, and Proposition~\ref{prop:endpoint} applies.
On the other hand, for $u_n \coloneqq \exp(-\pi/2-2\pi n)$,
\[ \frac{1-f_0'(u_n)}{u_n}=\frac{u_n}{4} \rightarrow 0, \ \  n \to \infty.\]
Thus the first inequality in \eqref{eq:differential-endpoints} fails.
The second derivative for $x>0$ is
\[f_0''(x)=-\frac14\left(1+\sin(\log x)+\cos(\log x)+2x\right),\]
which changes sign infinitely often near $0$ and has no limit there.

In fact, $(Q)$ fails for every fixed block length $q$.
Let $J_n \coloneqq [u_n-u_n^{3/2},u_n+u_n^{3/2}]$.
For each fixed $q$, uniformly for $x\in J_n$ and $0\le j<q$,
\begin{equation}\label{eq:log-composition}
f_0^j(x) = x+O_q(u_n^2), \qquad \log(f_0^j(x)) = \log u_n+O_q(u_n^{1/2}). 
\end{equation}
The first estimate follows from $F(t) = O(t^2)$ by telescoping.
For the second estimate, we use $x/u_n=1+O(u_n^{1/2})$, divide the first estimate by $u_n$, and apply $\log(1+t)=O(t)$ as $t\to0$.

Let 
\[ s_n \coloneqq \log u_n=-\frac{\pi}{2}-2\pi n,  \qquad \Delta_{n,j}(x) \coloneqq \log(f_0^j(x))-s_n.\]
By \eqref{eq:log-composition}, $\Delta_{n,j}(x)=O_q(u_n^{1/2})$, uniformly for $x\in J_n$ and $0\le j<q$.
We apply Taylor's formula to the function $s\mapsto\sin s$ at $s=s_n$, with increment $\Delta$.
Since $\sin s_n=-1$ and $\cos s_n=0$, we have
\[ 1+\sin(s_n+\Delta)  =1-\cos\Delta  =\frac{\Delta^2}{2}+O(\Delta^4), \qquad \Delta\to0.\]
Substituting $\Delta=\Delta_{n,j}(x)$, we obtain
\[ 1+\sin(\log(f_0^j(x)))  =O_q(u_n),\]
uniformly for $x\in J_n$ and $0\le j<q$.

The chain rule therefore yields
\[ 1-(f_0^q)'(x)=O_q(u_n^2),\qquad |J_n|-|f_0^q(J_n)|=O_q(u_n^{7/2})=o(|J_n|^2).\]
On the other hand, $(Q)$ applied to the word $0^q$ would give
\[ |J_n|-|f_0^q(J_n)| \ge\frac{a|J_n|^2}{1+a|J_n|},\]
which is a contradiction.
Under $(H_\partial)$, the endpoint drift condition is therefore strictly weaker than the quadratic contraction estimates for compositions of fixed length.
Indeed, Lemma~\ref{lem:mesh-drift} gives $(Q)\Rightarrow(M) \Rightarrow (D)$ under the standing source assumptions.
This example satisfies the mesh bound $(M)$ but fails $(Q)$ for every composition length.
\end{Exa}

\subsection{Pointwise and uniform mass conditions}

The almost-everywhere mass condition can hold without the uniform mass condition, even for smooth source systems satisfying $(H)$.

\begin{Exa}\label{ex:W-not-UW}
Let $b \coloneqq 1/\sqrt2$ and $\theta \coloneqq 1-b$, and consider the source maps $\{f_i\}_{i = 0,1}$ given by
\[ f_0(x)=\frac{x}{\sqrt{1+x^2}},\quad f_1(x)=b+\theta x.\]
This is a compatible smooth system satisfying $(H)$ and $(R)$, but $f_0''(0)=0$, so $(ND)$ fails.
The conjugacy measures have the following properties.\\
(i) For every target system $\{g_i\}_i$ satisfying \eqref{eq:basic-g}, its conjugacy measure $\mu$ satisfies
\[ \lim_{r\downarrow0}-r\log\mu(\Ball r x)=0
 \qquad\text{for Lebesgue-a.e. } x\in[0,1]. \]
In particular, it satisfies $(W)$ and is UST regular.\\
(ii) If the target maps $\{g_i\}_i$ are affine with weights $p_0,p_1>0$ and $p_0+p_1=1$, then $(UW)$ fails.
\end{Exa}

\begin{proof}
The source maps $\{f_i\}_i$ are smooth and compatible, and
\[ f_0'(x)=(1+x^2)^{-3/2},\qquad f_0''(x)=-3x(1+x^2)^{-5/2},\qquad f_1'(x)=\theta<1.\]
Thus $E_0=\{0\}$ and $E_1=\emptyset$, so $(H)$ holds, whereas $f_0'(0)=1$ and $f_0''(0)=0$ violate $(ND)$.
Vanishing mesh follows from Lemma~\ref{lem:finite}.
Direct iteration gives
\begin{equation}\label{eq:iteration-f0} 
f_0^n(x)=\frac{x}{\sqrt{1+nx^2}}, \qquad x \in [0,1]. 
\end{equation}
By \eqref{eq:iteration-f0}, $\delta_n(f)\ge |f_0^n([0,1])|=(n+1)^{-1/2}$, so $(M)$ fails. 
Since $\Lip(f_1)<1$, the only infinite word with no strict block of length one is $0^\infty$.
Consequently,
\[ K_1=\bigcap_{n\ge1}f_0^n([0,1])=\{0\},\]
which proves $(R)$.

For (i), let $H_k \coloneqq f_0^k\circ f_1$, $k \ge 0$. 
Then $H_k(x) = \frac{b+\theta x}{\sqrt{1+k(b+\theta x)^2}}$, and hence
\[ H_k^{\prime}(x) = \frac{\theta}{(1+k(b + \theta x)^2)^{3/2}}. \]
Thus $\Lip(H_k)\le\theta$, and
\[ \left|\frac{d}{dx}\log H_k'(x)\right| =\frac{3k\theta(b+\theta x)}{1+k(b+\theta x)^2} \le \frac{3\theta}{b}.\]
We also have that 
\[ \frac{H_k^{\prime} (x)}{H_k^{\prime} (y)}  \le \exp\left(\frac{3\theta}{b} |x-y| \right), \quad x, y \in [0,1], \ k \ge 0. \]

This estimate and the chain rule give, for every finite composition $H_{k_1}\circ\cdots\circ H_{k_j}$,
\begin{equation}\label{eq:distortion} 
\frac{\max_{[0,1]}(H_{k_1}\circ\cdots\circ H_{k_j})'}{\min_{[0,1]}(H_{k_1}\circ\cdots\circ H_{k_j})'} \le  \exp\!\left(\frac{3\theta}{b(1-\theta)}\right) \eqqcolon D. 
\end{equation}

Let $x \notin D_f$, and let $\omega(x)$ be its unique original address over $\{0,1\}$.
If $\omega(x)$ contained only finitely many occurrences of $1$, then it would have the form $\sigma 0^\infty$ for some finite word $\sigma$.
Since $0^\infty$ codes $0$, this would give $x=f_\sigma(0)\in D_f$, which is a contradiction.

Thus $\omega(x)$ contains infinitely many occurrences of $1$ and has the unique representation
\[ \omega(x) =0^{k_1(x)}1\,0^{k_2(x)}1\,0^{k_3(x)}1\cdots, \qquad k_j(x) \in \{0,1,2,\ldots\}.\]
Here $k_1(x)$ is the number of zeros before the first occurrence of $1$, and, for $j\ge2$, $k_j(x)$ is the number of zeros between the $(j-1)$st and $j$th occurrences of $1$.
The value $k_j (x) = 0$ is allowed, so the representation includes consecutive occurrences of $1$.
Each block $0^{k_j(x)}1$ corresponds to the map $ f_0^{k_j(x)}\circ f_1 = H_{k_j(x)}$.
We call $(k_1(x),k_2(x),\ldots)$ the induced address of $x$.

Let 
\[ J_j(x) \coloneqq H_{k_1(x)}\circ\cdots\circ H_{k_j(x)}([0,1]), \quad j \ge 1, \qquad J_0(x) \coloneqq [0,1].\]

Let $J \coloneqq H_{k_1}\circ\cdots\circ H_{k_{j-1}} ([0,1])$ for $j \ge 2$. 
By \eqref{eq:distortion}, 
\begin{align*} 
\left|  H_{k_1}\circ\cdots\circ H_{k_{j-1}} \left(H_{\ell}([0,1])\right) \right| &\le \max_{[0,1]}(H_{k_1}\circ\cdots\circ H_{k_{j-1}})' \left|H_{\ell}([0,1])\right| \\
&\le D \min_{[0,1]}(H_{k_1}\circ\cdots\circ H_{k_{j-1}})' \left|H_{\ell}([0,1])\right| \\
&\le D |J|  \left|H_{\ell}([0,1])\right|. 
\end{align*}

By \eqref{eq:iteration-f0}, 
$H_k([0,1]) = \left[\frac1{\sqrt{k+2}},\frac1{\sqrt{k+1}}\right]$, and hence $\bigcup_{k\ge K} H_k([0,1])= \left(0, \frac{1}{\sqrt{K+1}} \right]$.
Summing over $\ell \ge K$, we obtain
\[  \Leb\left(\{x\in J\setminus D_f \colon k_j(x)\ge K\}\right) \le \sum_{\ell \ge K} \left|  H_{k_1}\circ\cdots\circ H_{k_{j-1}} \left(H_{\ell}([0,1])\right) \right|  \le  \frac{D|J|}{\sqrt{K+1}}. \]

Since $ \bigcup_{k\ge0} H_k((0,1]) = (0,1]$, 
we obtain that 
\[ \bigcup_{k_1, \dots, k_{j-1} \ge 0}  H_{k_1}\circ\cdots\circ H_{k_{j-1}} ([0,1]) = (0,1] \] 
by induction on $j \ge 2$. 
These cylinders have pairwise disjoint interiors, so their lengths sum to one. 
Summing over the cylinders, we see that 
\[ \Leb\left(\{x\notin D_f:k_j(x)\ge K\}\right) \le \frac{D}{\sqrt{K+1}}.\] 

Fix $0<\varepsilon<-\log\theta$ and apply the Borel--Cantelli lemma with $K=\lceil \exp(\varepsilon j)\rceil$. 
Then, for almost every $x$, there is $C_x<\infty$ such that the original depth of $J_j(x)$ satisfies
\[ n_j(x) \coloneqq \sum_{\ell=1}^j (k_\ell(x)+1) \le C_x \exp(\varepsilon j), \qquad j\ge1.\]
Hence $\mu(J_j(x))\ge c_g^{n_j(x)}$.
Recall that $|J_j(x)| \le \theta^j$. 
For $0 < r < 1$, choose the least $j$ such that $|J_j(x)|\le r$.
Then $J_j(x)\subset\Ball r x$ and $r<|J_{j-1}(x)|\le\theta^{j-1}$.
Consequently, as $r \downarrow 0$,
\[ 0\le-r\log\mu(\Ball r x) \le r n_j(x)\log(1/c_g) \le C_x\theta^{j-1}\exp(\varepsilon j)\log(1/c_g) \to 0.\]
Here $j\to\infty$ as $r\downarrow0$.
This proves the pointwise assertion and hence $(W)$.
Together with $(R)$, Theorem~\ref{thm:main} gives UST regularity.

For (ii), \eqref{eq:iteration-f0}, \eqref{eq:conjugacy}, and the definition of $g_0$ give
\[\mu\!\left(\left[0,\frac1{\sqrt{n+1}}\right]\right) =\mu([0,f_0^n(1)]) =\varphi(f_0^n(1)) =g_0^n(\varphi(1)) =g_0^n(1) =p_0^n.\]

Let $x_n = r_n \coloneqq 1/(2\sqrt{n+1})$ for $n\ge1$.
Then $0<r_n\le\min\{x_n,1-x_n\}$ and
\[ -r_n\log\mu(\Ball{r_n}{x_n}) = \frac{n\log(1/p_0)}{2\sqrt{n+1}} \rightarrow+\infty, \quad n \to \infty.\]
Condition $(UW)$ would give an upper bound independent of $n$, which is a contradiction.
Thus $(W)$ is strictly weaker than $(UW)$ even for conjugacy measures of smooth finite systems.
\end{proof}

In the next example, the maps $f_0$ and $f_2$ and the image intervals of all three maps are the same as in Example~\ref{ex:interior}.
The displacement at the interior neutral fixed point is now cubic rather than quadratic.
The endpoint conditions $(D)$ and $(ND)$ still hold, but the uniform mesh condition $(M)$ fails.

\begin{Exa}\label{ex:cubic-interior}
Let $N=3$ and
\[ f_0(x)=\frac{x}{8},\qquad f_1(x)=x-\left(x-\frac12\right)^3,\qquad f_2(x)=\frac78+\frac{x}{8}.\]
Then the following hold.\\
(i) The source system is compatible and $C^\infty$, has vanishing mesh, and satisfies $(D)$, $(ND)$, $(F)$ with $p=1$, and $(R)$.
However, $(H_\partial)$ and $(M)$ fail, and $(Q)$ fails for every fixed composition length.\\
(ii) For every target system satisfying \eqref{eq:basic-g}, the conjugacy measure $\mu$ satisfies
\[ \lim_{r\downarrow0}-r\log\mu(\Ball r x)=0 \qquad\text{for Lebesgue a.e.  }x\in[0,1].\]
In particular, $(W)$ holds and $\mu$ is UST regular.\\
(iii) If the target maps are affine with positive weights $p_0,p_1,p_2$, then $(UW)$ fails.
\end{Exa}

\begin{proof}
For (i), let $\xi\coloneqq1/2$.
The maps are compatible and $C^\infty$, and
\[ f_0'=f_2'=\frac18,\qquad f_1'(x)=1-3(x-\xi)^2\in[1/4,1].\]
Thus $E_0=E_2=\emptyset,\qquad E_1=\{\xi\}$. 
Consequently, $(F)$ holds with $p=1$, whereas $(H_\partial)$ fails.
Both $(D)$ and $(ND)$ hold because $f_0'(0)=f_2'(1)=1/8<1$.

Define $u_0 \coloneqq \frac12,\ u_{n+1} \coloneqq u_n-u_n^3, n \ge 0$.
Then $u_n\downarrow0$ and $f_1^n([0,1])=[\xi-u_n,\xi+u_n]$. 
For every interval $I\subset[0,1]$ of length $s$, direct calculation gives
\[ |f_i(I)|\le s-\frac{s^3}{4}, \qquad i\in\{0,1,2\},\]
with equality for $i=1$ when $I$ is centered at $\xi$.
Since $s\mapsto s-s^3/4$ is increasing on $[0,1]$, iteration yields $\delta_n(f)=2u_n$.
Moreover,
\[  u_{n+1}^{-2}-u_n^{-2} =\frac{2-u_n^2}{(1-u_n^2)^2} \rightarrow 2, \quad n \to \infty.\]
Hence
\[ u_n\sim\frac1{\sqrt{2n}}, \qquad  \delta_n(f)\sim\sqrt{\frac2n}.\]
Thus the mesh vanishes, but $(M)$ fails.
Since $(Q)$ implies $(M)$ by Corollary~\ref{cor:block}, condition $(Q)$ also fails for every fixed composition length.

The only infinite word containing no strict block of length one is $1^\infty$.
Therefore
\[ K_1=\bigcap_{n\ge1}f_1^n([0,1])=\{\xi\},\]
which proves $(R)$.

For (ii), we apply the inducing argument from Example~\ref{ex:W-not-UW}.
Define
\[ H_{k,i}\coloneqq f_1^k\circ f_i, \qquad k\ge0,\quad i\in\{0,2\}.\]
These maps satisfy
\[ \Lip(H_{k,i})\le\frac18, \qquad \left\|(\log H_{k,i}')'\right\|_\infty \le \frac{32}{7}.\]
For the second estimate, let $\psi(t) \coloneqq t-t^3$ and use
\[ (\psi^j)'(t)\le\left(\frac{\psi^j(t)}{t}\right)^3, \qquad \sum_{j\ge0}\bigl(\psi^j(t)\bigr)^4 \le \frac47 t^2, \qquad \frac38\le t\le\frac12.\]
The first inequality follows from $1-3t^2\le(1-t^2)^3$, and the second follows by telescoping $t^2-\psi(t)^2\ge(7/4)t^4$.
The chain rule then gives the stated bound.
Consequently, all finite compositions of the induced maps have a common distortion bound.

Outside the countable set
\[ Z\coloneqq D_f\cup \bigcup_{\sigma\in\A^*}f_\sigma(\{\xi\}),\]
every address contains infinitely many symbols in $\{0,2\}$ and has a unique representation
\[ \omega(x)=1^{k_1(x)}i_1(x)\,1^{k_2(x)}i_2(x) \cdots, \qquad k_j(x)\ge0,\quad i_j(x)\in\{0,2\}.\]
Let
\[
 \begin{aligned}
 J_j(x)&\coloneqq
 H_{k_1(x),i_1(x)}\circ\cdots\circ
 H_{k_j(x),i_j(x)}([0,1]),\\
 n_j(x)&\coloneqq\sum_{s=1}^j(k_s(x)+1).
 \end{aligned}
\]
The induced tails satisfy
\[ \bigcup_{\substack{k\ge K\\i\in\{0,2\}}}  H_{k,i}([0,1]) =[\xi-u_K,\xi+u_K]\setminus\{\xi\}.\]
Therefore, their total length is $2u_K = O(K^{-1/2})$.
Together with uniform distortion, this gives
\[ \Leb\left(\{x\notin Z:k_j(x)\ge K\}\right) \le \frac{C}{\sqrt{K+1}}, \qquad j,K\ge1,\]
where $C$ is independent of $j$ and $K$.

Fix $0<\varepsilon<\log8$.
Exactly as in Example~\ref{ex:W-not-UW}, the Borel--Cantelli lemma gives $n_j(x)\le C_x \exp(\varepsilon j)$ for Lebesgue a.e.  $x$.
Also,
\[ |J_j(x)|\le8^{-j}, \qquad  \mu(J_j(x))\ge c_g^{n_j(x)}.\]
The same comparison between radii and induced cylinder lengths as in that example proves the asserted pointwise limit.
Thus $(W)$ holds, and $(R)$ together with Theorem~\ref{thm:main} gives UST regularity.

Finally, for affine target maps, the central cylinders have masses
\[ \mu(\Ball{u_n}{\xi})=\mu(f_1^n([0,1]))=p_1^n.\]
Consequently,
\[ \lim_{n \to \infty} -u_n\log\mu(\Ball{u_n}{\xi})  = \lim_{n \to \infty} nu_n\log(1/p_1) = +\infty.\]
This contradicts $(UW)$ and proves (iii).
\end{proof}

\subsection{Regularity without the mass condition}\label{subsec:regular-without-W}

The following example uses the orthogonal-polynomial characterization of UST regularity.
For a probability measure $\mu$ with support $[0,1]$, let $P_n$ be its monic orthogonal polynomial of degree $n$, so that $P_n$ is orthogonal in $L^2(\mu)$ to every polynomial of degree less than $n$.
Then UST regularity is characterized by
\begin{equation*}
 \mu\in\Reg([0,1])  \quad\Longleftrightarrow\quad \lim_{n\to\infty}\|P_n\|_{L^2(\mu)}^{1/n}=\frac14,
\end{equation*}
where $1/4$ is the logarithmic capacity of $[0,1]$.
Equivalently, the positive leading coefficients of the orthonormal polynomials have $n$th roots tending to $4$.
The polynomial $P_n$ minimizes the $L^2(\mu)$ norm among all monic polynomials of degree $n$.
Indeed, every such polynomial can be written as $Q=P_n+S$, where $\deg S<n$, and orthogonality gives
\[ \|Q\|_{L^2(\mu)}^2 =\|P_n\|_{L^2(\mu)}^2+\|S\|_{L^2(\mu)}^2 \ge\|P_n\|_{L^2(\mu)}^2.\]

For general measures, UST regularity does not imply $(W)$.

\begin{Exa}\label{ex:regular-not-W}
There exists a nonatomic probability measure $\mu$ with support $[0,1]$ such that $\mu\in\Reg([0,1])$, but
\begin{equation}\label{eq:regular-not-W}
 \limsup_{r\downarrow0}\left[-r\log\mu(\Ball r x)\right]=+\infty
 \qquad\text{for Lebesgue a.e. }x\in[0,1].
\end{equation}
Thus $(W)$ is not necessary for UST regularity among general measures.
This example does not determine whether the condition is necessary for conjugacy measures of finite systems satisfying $(R)$.

Let $C\subset[-1/2,1/2]$ be an affine copy of the middle-thirds Cantor set.
Then $C$ is compact and has Lebesgue measure zero.
Let $\mathcal P(C)$ denote the set of Borel probability measures supported on $C$.
For $\rho\in\mathcal P(C)$, define its logarithmic energy by
\[ I(\rho) \coloneqq \iint_{C\times C}\log\frac1{|x-y|}\,d\rho(x)\,d\rho(y).\]
The logarithmic capacity of $C$ is
\[ c\coloneqq\caplog(C)= \exp(-V_C), \qquad  V_C\coloneqq\inf_{\rho\in\mathcal P(C)}I(\rho),\]
with the convention $\exp(-\infty) = 0$.

Let $\eta$ be the probability measure that assigns mass $2^{-k}$ to each basic interval at level $k$ of the Cantor construction. 
Then $I(\eta) < \infty$ and hence $c > 0$.

By \cite[Theorem~A.9]{Simon}, there exists a unique measure $\nu\in\mathcal P(C)$ attaining the minimum energy: $I(\nu)=V_C=-\log c$.
This minimizer is called the equilibrium probability measure of $C$.
The measure $\eta$ above is used only to establish that $C$ has positive capacity.
The construction below uses the equilibrium measure $\nu$.
Since $I(\nu)<\infty$, the measure $\nu$ is nonatomic. 

We use the following global upper bound for the logarithmic potential of $\nu$.
By Frostman's theorem \cite[Theorem~A.10(a)]{Simon},
\begin{equation}\label{eq:Frostman} 
U^\nu(z) \coloneqq \int_C\log\frac1{|z-t|}\,d\nu(t) \le V_C=-\log c,  \qquad z \in \mathbb C. 
\end{equation}

For $n\ge1$, let
\[ H_n(x) \coloneqq T_n(2x-1),\qquad a_n \coloneqq 2^{2n-1}, \qquad C_n \coloneqq H_n^{-1}(C),\]
where $T_n$ is the Chebyshev polynomial of degree $n$ and $a_n$ is the leading coefficient of $H_n$.

For each $n\ge1$, let $x_{n,j}\coloneqq\frac{1-\cos(j\pi/n)}2, \ j=0,\ldots,n$. 
Then
\[ 0=x_{n,0}<x_{n,1}<\cdots<x_{n,n}=1,\qquad H_n(x_{n,j})=(-1)^{n-j}.\]
The map $H_n$ is strictly monotone on each interval $[x_{n,j-1},x_{n,j}]$ and maps it onto $[-1,1]$.
Since $C\subset[-1/2,1/2]\subset(-1,1)$, for every $t\in C$ and every $j=1,\ldots,n$, there is exactly one point $x\in(x_{n,j-1},x_{n,j})$ satisfying $H_n(x)=t$.
Let $\psi_{n,j}\colon(-1,1)\longrightarrow(x_{n,j-1},x_{n,j})$ denote the corresponding inverse map.
Thus $H_n(\psi_{n,j}(t)) = t, \  t\in(-1,1)$.

Let 
\[ \nu_n \coloneqq \frac1n\sum_{j=1}^n (\psi_{n,j})_{*} \nu, \]
where $(\psi_{n,j})_*\nu$ denotes the push-forward of $\nu$ under $\psi_{n,j}$. 
The maps  $\{\psi_{n,j}\}_j$ are smooth on a neighborhood of $C$, so $C_n = \bigcup_{j = 0}^{n} \psi_{n,j}(C)$ is compact and $\Leb(C_n) = 0$, and $\nu_n$ is nonatomic and supported on $C_n$.
The factorization $H_n(z)-t=a_n\prod_{j=1}^n(z-\psi_{n,j}(t))$ and \eqref{eq:Frostman} give
\begin{equation}\label{eq:pullback-potential}
 \begin{aligned}
 U^{\nu_n}(z)&=\frac{U^\nu(H_n(z))+\log a_n}{n} \le-\log c_n,\\
 c_n&=\left(\frac{c}{a_n}\right)^{1/n} = \frac14(2c)^{1/n} \rightarrow \frac14, \ n \to \infty.
 \end{aligned}
\end{equation}

Let $F_n \coloneqq \bigcup_{j=1}^nC_j$.
Since each $F_n$ is compact and Lebesgue null, choose $r_n>0$ with $r_{n+1}<r_n/2$ and
\[ \Leb(\{x\in[0,1] \colon \dist(x,F_n)\le r_n\}) \le 2^{-n}.\]
Define
\[ t_0=1,\qquad t_n \coloneqq e^{-n/r_n},\qquad w_n=t_{n-1}-t_n,\qquad
 \mu=\sum_{n=1}^\infty w_n\nu_n.\]
Then $w_n>0$, $\sum_n w_n=1$, and $\sum_{j>n} w_j=t_n$.
Hence $\mu$ is a nonatomic probability measure. 
We have $\nu_n ((x_{n,j-1},x_{n,j})) = 1/n$.
The lengths of the monotonicity intervals satisfy
\[ x_{n,j}-x_{n,j-1}  = \sin\!\left(\frac{(2j-1)\pi}{2n}\right)  \sin\!\left(\frac{\pi}{2n}\right).\] 
In particular,
\[ \max_{1\le j\le n}(x_{n,j}-x_{n,j-1}) \le \sin\!\left(\frac{\pi}{2n}\right)\le \frac{\pi}{2n} \rightarrow0, \ n \to \infty.\]
Thus the maximum length of these intervals tends to zero. 
Consequently, every nonempty open subinterval of $[0,1]$ has positive $\mu$-mass, and $\operatorname{supp}\mu=[0,1]$.

For any monic polynomial $Q$ of degree $m$, factorization into its complex roots and \eqref{eq:pullback-potential} give
\[ \int_{C_n} \log|Q(x)|\,d\nu_n(x)\ge m\log c_n.\]
By Jensen's inequality, for every fixed $n$ we have
\[ \|Q\|_{L^2(\mu)} \ge\sqrt{w_n}\,\|Q\|_{L^2(\nu_n)} \ge\sqrt{w_n}\,c_n^m.\]

Let $P_m$ be the monic $\mu$-orthogonal polynomial of degree $m$.
Substituting $Q=P_m$ into the preceding inequality gives
\[ \|P_m\|_{L^2(\mu)}^{1/m} \ge w_n^{1/(2m)}c_n.\]
For each fixed $n$, we have $w_n^{1/(2m)}\to1$ as $m\to\infty$, and hence
\[ \liminf_{m\to\infty}\|P_m\|_{L^2(\mu)}^{1/m} \ge c_n.\]
Letting $n\to\infty$ and using \eqref{eq:pullback-potential}, we obtain
\[ \liminf_{m\to\infty}\|P_m\|_{L^2(\mu)}^{1/m}  \ge \frac14.\]

By orthogonality, $P_m$ minimizes the $L^2(\mu)$ norm among all monic polynomials of degree $m$.
Comparison with the monic Chebyshev polynomial $Q_m(x)=2^{1-2m}T_m(2x-1)$ gives
\[ \|P_m\|_{L^2(\mu)} \le \|Q_m\|_{L^2(\mu)} \le \|Q_m\|_{\infty,[0,1]} = 2^{1-2m}.\] 
Consequently,
\[ \limsup_{m\to\infty}\|P_m\|_{L^2(\mu)}^{1/m}  \le \frac{1}{4}.\]

These bounds prove that $\mu\in\Reg([0,1])$.

On the other hand, the Borel--Cantelli lemma gives, for Lebesgue almost every $x$, $\dist(x,F_n)>r_n$ for all sufficiently large indices.
For every such $x$,
\[ \mu(\Ball{r_n}x)\le\sum_{j>n}w_j = \exp(-n/r_n), \qquad -r_n\log\mu(\Ball{r_n}x)\ge n\]
for all sufficiently large $n$.
This proves \eqref{eq:regular-not-W}.
\end{Exa}

\section{Consequences for recurrence coefficients}\label{sec:cesaro-nevai}

The regularity results above also control the recurrence coefficients of orthogonal polynomials in an averaged sense.
Let $\mu$ be a probability measure with support $[0,1]$.
Using the monic orthogonal polynomials from Subsection~\ref{subsec:regular-without-W}, let $p_n \coloneqq P_n/\|P_n\|_{L^2(\mu)}$ for $n\ge0$.
The orthonormal polynomials satisfy
\[ xp_n(x) = \alpha_{n+1}p_{n+1}(x)+\beta_np_n(x)+\alpha_np_{n-1}(x), \qquad n\ge0,\]
where $p_{-1}=0$, $p_0=1$, $\alpha_0=0$, $\alpha_n>0$ for $n\ge1$, and $\beta_n\in\mathbb R$ for $n\ge0$.

We recall the Ces\`aro--Nevai condition in the normalization corresponding to $[0,1]$.
The terminology follows Golinskii and Khrushchev \cite{GolinskiiKhrushchev} for orthogonal polynomials on the unit circle and Simon \cite{SimonCesaro} for those on the real line.

\begin{Def}\label{def:cesaro-nevai}
The measure $\mu$ satisfies the Ces\`aro--Nevai condition if
\[ \lim_{m\to\infty}\frac1m\sum_{n=1}^{m} \left(\left|\alpha_n-\frac14\right|+\left|\beta_n-\frac12\right|\right)=0.\]
\end{Def}

Since the recurrence coefficients are bounded, this condition is equivalent to
\[ \lim_{m\to\infty}\frac1m\sum_{n=1}^{m} \left[ \left(\alpha_n-\frac14\right)^2+ \left(\beta_n-\frac12\right)^2 \right]=0.\]
It is also equivalent to requiring, for every $\varepsilon>0$, that the proportion of indices $1\le n\le m$ for which $|\alpha_n-1/4|+|\beta_n-1/2|>\varepsilon$ tends to zero as $m\to\infty$.
This is a statement about the natural density of polynomial degrees.

The main theorem gives the following consequence for recurrence coefficients.

\begin{Cor}\label{cor:cesaro-nevai}
Under the hypotheses of Theorem~\ref{thm:main}, the measure $\mu$ satisfies the Ces\`aro--Nevai condition in Definition~\ref{def:cesaro-nevai}.
\end{Cor}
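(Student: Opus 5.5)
The plan is to combine Theorem~\ref{thm:main} with Simon's theorem on Ces\`aro--Nevai behavior for regular measures. Under the hypotheses of Theorem~\ref{thm:main}, $\mu\in\Reg([0,1])$ and $\operatorname{supp}\mu=[0,1]$. Simon's theorem \cite{SimonCesaro} covers measures on $[-2,2]$: if the measure is regular and its essential support is $[-2,2]$, then the Jacobi parameters satisfy
\[ \frac1m\sum_{n=1}^m\left(|a_n-1|+|b_n|\right)\to0. \]
So the proof amounts to transporting our measure to $[-2,2]$ and checking that both hypotheses survive the transport.

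\emph{Step 1 (transport).} Let $T(x)=4x-2$ and $\tilde\mu\coloneqq T_*\mu$, a probability measure with support $[-2,2]$. Its orthonormal polynomials are $\tilde p_n(y)=p_n((y+2)/4)$. Substituting $x=(y+2)/4$ into the three-term recurrence and multiplying by $4$ gives
\[ y\tilde p_n=4\alpha_{n+1}\tilde p_{n+1}+(4\beta_n-2)\tilde p_n+4\alpha_n\tilde p_{n-1}. \]
Hence $\tilde a_n=4\alpha_n$ and $\tilde b_n=4\beta_n-2$. We then have
\[ |\tilde a_n-1|+|\tilde b_n|=4\left(|\alpha_n-\tfrac14|+|\beta_n-\tfrac12|\right), \]
so the two Ces\`aro conditions are equivalent.

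\emph{Step 2 (regularity is affine invariant).} Capacity scales linearly under $T$, so $\caplog([-2,2])=4\cdot\frac14=1$. The leading coefficient of $\tilde p_n$ is $4^{-n}$ times that of $p_n$. Therefore the $n$th-root limit $4$ from the characterization in Subsection~\ref{subsec:regular-without-W} becomes $1$. Thus $\tilde\mu\in\Reg([-2,2])$.

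\emph{Step 3 (essential support).} The measure $\mu$ is nonatomic with support $[0,1]$, so its support has no isolated points. Hence the essential support of $\tilde\mu$ is $[-2,2]$.

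With Steps 1--3 in place, Simon's theorem applies to $\tilde\mu$, and Step 1 translates its conclusion back to the Ces\`aro--Nevai condition for $\mu$. The main obstacle is only bookkeeping: matching the precise hypotheses of Simon's theorem (regularity together with $\sigma_{\mathrm{ess}}=[-2,2]$) and the normalization of the recurrence. No new analysis is needed, since Theorem~\ref{thm:main} already supplies regularity.
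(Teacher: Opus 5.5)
Your proposal is correct and matches the paper's proof: regularity from Theorem~\ref{thm:main}, push-forward under $T(x)=4x-2$ with $\widetilde\alpha_n=4\alpha_n$ and $\widetilde\beta_n=4\beta_n-2$, then Simon's Ces\`aro--Nevai theorem and division by $4$. Your explicit checks that regularity and the essential support $[-2,2]$ survive the affine change of variables are details the paper leaves implicit.
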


\begin{proof}
Theorem~\ref{thm:main} gives $\mu\in\Reg([0,1])$.
Let $\widetilde\mu$ be its image under $T(x)=4x-2$.
Then $\widetilde\mu$ is regular with support $[-2,2]$, and its recurrence coefficients are $\widetilde\alpha_n=4\alpha_n$ and $\widetilde\beta_n=4\beta_n-2$.
Applying \cite[Theorem~1.1]{SimonCesaro} and dividing by $4$ proves the assertion.
\end{proof}

The usual Nevai condition requires $\lim_{n \to \infty} \alpha_n = 1/4$ and $\lim_{n \to \infty} \beta_n = 1/2$.
The Ces\`aro--Nevai condition permits deviations along a set of indices of natural density zero.
Corollary~\ref{cor:cesaro-nevai} does not assert convergence of the individual recurrence coefficients.\\

\noindent{\bf Use of generative AI} \ During the preparation of this work, the author used ChatGPT 6 Astra and Claude Fable 5.1 to assist with exploration, structuring, drafting, and refining the text. 
After using these tools, the author reviewed and edited the content as needed. 
The author takes full responsibility for the content and integrity of the publication. \\

\bibliographystyle{amsplain}
\bibliography{general_UST_regularity_references}
\end{document}